\newtheorem{theorem}{Theorem}[section]
\newtheorem{example}[theorem]{Example}
\newtheorem{lemma}[theorem]{Lemma}
\newtheorem{proposition}[theorem]{Proposition}
\theoremstyle{definition}
\newtheorem{remark}[theorem]{Remark}
\newtheorem*{acknowledgements}{Acknowledgements}
\numberwithin{equation}{section}
\newcommand{\Ah}{A_{h}}
\newcommand{\uh}{u_{h}}
\newcommand{\vh}{v_{h}}
\newcommand{\fh}{f_{h}}
\newcommand{\bL}{\bm{A}}
\newcommand{\bu}{\bm{u}}
\newcommand{\buh}{\bu_{h}}
\newcommand{\bv}{\bm{v}}
\newcommand{\bmf}{\bm{f}}
\newcommand{\bw}{\bm{w}}
\newcommand{\curl}{\mathrm{curl}} 
\newcommand{\bcurl}{\bm{\curl}} 
\newcommand{\dvg}{\mathrm{div}} 
\newcommand{\bgrad}{\bm{\mathrm{grad}}} 
\newcommand{\Hocurl}{H(\Omega,\curl)} 
\newcommand{\Hcurl}{H(\curl)} 
\newcommand{\Hodiv}{H(\Omega,\dvg)} 
\newcommand{\Hdiv}{H(\dvg)} 
\newcommand{\mX}{\mathcal{X}} 
\newcommand{\Hox}{H(\Omega,\mX)} 
\renewcommand{\l}{\ell}
\newcommand{\hA}{\hat{A}}
\newcommand{\mC}{\mathcal{C}}
\newcommand{\mD}{\mathcal{D}}
\newcommand{\mH}{\mathcal{A}} 
\newcommand{\mT}{\mathcal{T}}
\newcommand{\mTh}{\mathcal{T}_{h}} 
\newcommand{\mV}{\mathcal{V}} 
\newcommand{\mVh}{\mV_{h}} 
\newcommand{\N}[1][0]{{\mathrm{N}}^{#1}} 
\newcommand{\RTN}[1][0]{{\mathrm{RTN}}^{#1}}
\newcommand{\R}{\mathbb{R}}
\title{Robust Algebraic multilevel preconditioning in $\Hcurl$ and $\Hdiv$}
\author{S.~K. Tomar}
\address{Johann Radon Institute for Computational and Applied Mathematics,
Austrian Academy of Sciences,\\ Altenbergerstrasse 69, 4040 Linz, Austria}
\email{satyendra.tomar@ricam.oeaw.ac.at}
\date{\today}
\keywords{Algebraic multilevel iteration method, lowest-order Nedelec and Raviart-Thomas-Nedelec spaces, optimal order complexity, $\Hdiv$ and $\Hcurl$ spaces}
\subjclass{65N30, 65N22, 65N55}
\begin{document}
\maketitle

\begin{abstract}
An algebraic multilevel iteration method for solving system of linear algebraic equations arising in $\Hcurl$ and $\Hdiv$ spaces are presented. The algorithm is developed for the discrete problem obtained by using the space of lowest order Nedelec and Raviart-Thomas-Nedelec elements. The theoretical analysis of the method is based only on some algebraic sequences and generalized eigenvalues of local (element-wise) problems. In the hierarchical basis framework, explicit recursion formulae are derived to compute the element matrices and the constant $\gamma$ (which measures the quality of the space splitting) at any given level.
It is proved that the proposed method is robust with respect to the problem parameters, and is of optimal order complexity. Supporting numerical results, including the case when the parameters have jumps, are also presented.
\end{abstract}

\pagestyle{myheadings}
\thispagestyle{plain}

\section{Introduction}
\label{sec:Intro}
Consider the finite element discretization of variational problems related to the bilinear form
\begin{equation}
\mH(\bu, \bv) := \alpha (\bu, \bv) + \beta (\mX \bu, \mX \bv), \quad \alpha,\beta \in \R^{+},
\label{eq:ModProb}
\end{equation}
defined on the Hilbert space
\begin{equation}
\Hox := \{ \bv \in (L^{2}(\Omega))^{d} : \mX \bv \in L^{2}(\Omega) \}.
\label{eq:Def_Hcurl}
\end{equation}
Here $\Omega \subset \R^{d}$, $d = 2, 3$, is a Lipschitz domain, and $\mX$ is the $\curl$ operator for $d = 2$ and $\dvg$ operator for $d = 3$. Note that $\dvg ~\bv = \partial_{x} v_{1} + \partial_{y} v_{2} + \partial_{z} v_{3} $ is the divergence of a three-dimensional vector $\bv = [v_{1}, v_{2}, v_{3}]^{T}$, $\curl ~\bv = \partial_{x} v_{2} - \partial_{y} v_{1}$ is the scalar curl of a two-dimensional vector $\bv = [v_{1}, v_{2}]^{T}$, and $(\cdot , \cdot)$ denotes the inner-product in $L^{2}(\Omega)$. For $\alpha = \beta = 1$,  the bilinear form \eqref{eq:ModProb} is precisely the inner-product in $\Hox$.
The adjoint of operator $\mX$ is defined by
\begin{equation*}
\mX^{a} = 
\begin{cases}
\bcurl & \mathrm{for}~ \mX = \curl, d = 2 \\
-\bgrad & \mathrm{for}~ \mX = \dvg, d = 3
\end{cases},
\end{equation*}
where, for a scalar function $w$, $\bgrad ~w = [\partial_{x} w, \partial_{y} w, \partial_{z} w]^{T}$ (for three-dimensional problem), and $\bcurl ~w = [\partial_{y} w, -\partial_{x} w]^{T}$ (for two-dimensional problem).
Associated with the inner-product $\mH$, there exists a linear operator $\bL := \alpha \bm{I} + \beta \mX^{a} \mX$, which maps $\Hox$ onto its dual space, and is determined by the relation
\begin{equation}
(\bL \bu, \bv) = \mH(\bu, \bv), \quad \forall \bv \in \Hox,
\label{eq:Def_L}
\end{equation}
Given a finite element space $\mVh$ of $\Hox$, the symmetric and positive-definite (SPD) operator $\Ah: \mVh \rightarrow \mVh$, which is the discretization of the operator $\bL$ together with natural boundary conditions, is determined by
\begin{equation}
(\Ah \uh , \vh) = \mH(\uh , \vh), \quad \forall \vh \in \mVh.
\label{eq:Def_Lh}
\end{equation}
The operator equation $\bL \bu = \bmf$, for $\bmf \in (L^{2}(\Omega))^{d}$, then leads to the following discrete problem
\begin{equation}
\Ah \uh = \fh ,
\label{eq:ModProbDiscrete}
\end{equation}
which is uniquely solvable.
For $\Hocurl$, such problems frequently occur in various contexts in electromagnetism, e.g., low-frequency time-harmonic Maxwell equations \cite{Monk-03}, or some formulations of the (Navier -) Stokes equations \cite{GiraultRaviart-86}, and for $\Hodiv$ such problems frequently occur in, e.g., mixed formulations of elliptic problems, least-squares formulations of elliptic problems, part of fluid flow problems, and in functional-type a posteriori error estimates, see, e.g., \cite{ArnoldFW-97, ArnoldFW-00, LazarovRT-08, Repin-08, RepinTomar-11} and the reference therein. Therefore, developing fast solvers for large system of equations \eqref{eq:ModProbDiscrete} is of significant importance.
Preconditioning methods for such linear systems in $\Hcurl$ within the framework of domain decomposition methods, multigrid methods, and auxiliary space methods have been proposed by several authors, see e.g., \cite{ArnoldFW-97, ArnoldFW-00, Hiptmair-97, HiptmairXu-07, VassilevskiLazarov-96, VassilevskiWang-92} and the references therein. The first results for multigrid in $\Hdiv$ (based on smoothing and approximation property) was presented in \cite{Brenner-92} for triangular elements. The first results for multigrid in $\Hcurl$ (within the framework of overlapping Schwarz methods) were obtained by Hiptmair in \cite{Hiptmair-98}. A unified treatment of multigrid methods for $\Hcurl$ and $\Hdiv$ was presented by Hiptmair and Toselli in \cite{HiptmairT-98}. However, the condition number estimates of their preconditioned system were not robust with respect to the parameters $\alpha$ and $\beta$. Arnold et al. \cite{ArnoldFW-00} employed the multigrid framework by developing necessary estimates for mixed finite element methods (FEM) based on discretizations of $\Hdiv$ and $\Hcurl$, and thereby obtained parameter independent condition number estimates of the preconditioned system.
Pasciak and Zhao studied the overlapping Schwarz methods for $\Hcurl$ in polyhedral domains in \cite{PasciakZ-02}, and Reitzinger and Schoeberl studied algebraic multigrid methods for edge elements in \cite{ReitzingerS-02}.
Auxiliary space preconditioning, proposed by Xu in \cite{Xu-96}, was studied for $H_{0}(\Omega,\curl)$ (the space $\Hcurl$ with zero tangential trace) by Hiptmair et al. \cite{HiptmairWZ-06}. Nodal auxiliary space preconditioning in $\Hcurl$ and $\Hdiv$ was studied by Hiptmair and Xu in \cite{HiptmairXu-07}, and the proposed preconditioner was robust with respect to the parameters $\alpha$ and $\beta$.
The main principles in constructing efficient multigrid and multilevel solvers for \eqref{eq:ModProbDiscrete} are projections into spaces of divergence-free vector fields, see \cite{VassilevskiWang-92}, or, alternatively, a discrete version of the Helmholtz decomposition, see e.g., \cite{ArnoldFW-97}, and/or the construction of a proper auxiliary space, see e.g., \cite{HiptmairXu-07}. Moreover, an effective error reduction generally demands to complement the coarse-grid correction by an appropriate smoother, e.g., additive or multiplicative Schwarz smoother, cf. \cite{ArnoldFW-00}. The simple scalar (point-wise) smoothers, in general, do not work satisfactorily for this class of problems.
All of these methods may be viewed as subspace correction methods \cite{Xu-92, XuZikatanov-02}, where different choices of specific components result in different methods (which also applies to the method presented in this paper).
Algebraic multilevel iteration (AMLI) methods were introduced by Axelsson and Vassilevski in a series of papers \cite{AxelssonV-89, AxelssonV-90, AxelssonV-91, AxelssonV-94}. The AMLI methods, which are recursive extensions of two-level methods for FEM \cite{AxelssonG-83}, have been extensively analyzed in the context of conforming and nonconforming FEM (including discontinuous Galerkin methods), see \cite{BlahetaMN-04, BlahetaMN-05, GKrausM-08, Kraus-02, KrausTomar-08, KrausTomar-08-3D, KrausTomar-11, Notay-00, Notay-02, NotayV-08}. For a detailed systematic exposition of AMLI methods, see the monographs \cite{KrausMargenov-09, Vassilevski-08}. These methods utilize a sequence of coarse-grid problems that are obtained from repeated application of a natural (and simple) hierarchical basis transformation, which is computationally advantageous. The underlying technique of these methods often requires only a few minor adjustments (mainly two-level hierarchical basis transformation) even if the underlying problem changes significantly. This is evident from the two different kind of problems considered in this paper, where the same algorithms (see Section~\ref{sec:Alg}) are used. Furthermore, the AMLI methods are robust with respect to the jumps in the operator coefficients (where classical multigrid methods suffer), and are computationally advantageous than classical algebraic multigrid methods.
In this paper, we first derive the results for two-dimensional $\Hcurl$ problem. Note that, in two-dimensions, the lowest-order Nedelec space can be obtained by a $90$ degrees rotation of lowest-order Raviart-Thomas space. Therefore, the space splitting presented in \cite{KrausTomar-11} also applies in this case (and vice-versa). However, we present a unified treatment of the element matrices arising from $(\bu, \bv)$ and $(\mX \bu, \mX \bv)$, which helps in deriving the explicit recursion formulae in simpler forms and without any undetermined constants. Moreover, with the unified treatment we are able to extend the results to three-dimensional lowest-order Raviart-Thomas-Nedelec elements in a straight-forward manner. Our analysis is based only on some algebraic sequences and the generalized eigenvalues of local (element-wise) problems. In hierarchical setting, we derive explicit recursion formulae to compute the element matrices and the constant $\gamma$ (which measures the quality of the space splitting) at any given level. The method is shown to be robust with respect to the parameters, i.e., the results hold uniformly for $0 < \alpha , \beta < \infty$.
The remainder of this paper is organized as follows. In Section~\ref{sec:DiscreteProb} we briefly discuss the finite element discretization of the model problem (\ref{eq:ModProb}) using the lowest-order Nedelec and Raviart-Thomas-Nedelec spaces. Section~\ref{sec:AMLI} starts with a brief description of the AMLI procedure (in Section~\ref{sec:Prec}). After presenting hierarchical basis transformations in Section~\ref{sec:HB}, the construction of the hierarchical splitting of the lowest-order Nedelec and Raviart-Thomas-Nedelec spaces is presented in Section~\ref{sec:Splitting}. In Section~\ref{sec:LocalAnalysis} a local two- and multi-level analysis is then presented and the main result is proved. The algorithms used in this paper are provided in Section~\ref{sec:Alg}. Finally, in Section~\ref{sec:NumRes} we present numerical experiments. These include the cases with known analytical solution ($\alpha = \beta = 1$), fixing one of the parameters and varying other from $10^{-6}$ to $10^{6}$, and the case of jumping coefficients. The conclusions are drawn in Section~\ref{sec:Conclusion}.

\section{Finite element discretization}
\label{sec:DiscreteProb}

In this section we briefly discuss the finite element discretization using lowest order Nedelec space in two-dimensions and lowest-order Raviart-Thomas-Nedelec space in three-dimensions, respectively.

\subsection{Finite element discretization using Nedelec elements}
\label{sec:DiscreteN}

We consider the tessellation of $\Omega \subset \R^{2}$ using square elements, and choose the reference element $\hat{K}$ as $[-1,1] \times [-1,1]$. Let $P_{r_{x},r_{y}}(\hat{K})$ denote the space of polynomials of degree $\leq r_{x}$ in $x$ and $\leq r_{y}$ in $y$. Also, let $P_{r}(\partial \hat{K})$ denote the space of polynomials of degree $\leq r$ on $\partial \hat{K}$. For the construction of $\mVh$, we use the space of lowest-order edge elements (Nedelec space of first kind), which is denoted by $\N$. The space $\N(\hat{K})$ is defined as
\begin{align}
\label{eq:N0}
\N(\hat{K})
& = P_{0,1}(\hat{K}) \times P_{1,0}(\hat{K})
= \left\{
\bm{v}(\hat{x},\hat{y})
= \left[ \begin{array}{c}
v_{1} + v_{2} \hat{y} \\
v_{3} + v_{4} \hat{x}
\end{array} \right]
\right\}.
\end{align}
Thus, the local basis for $\N$ has dimension $4$. Moreover, for $\bm{v}_{0} \in \N(\hat{K})$ we have
\begin{equation}
\curl ~\bm{v}_{0} \in P_{0,0}~, \quad \bm{v}_{0} \cdot \bm{t} \vert_{\partial \hat{K}} \in P_{0}(\partial \hat{K}),
\end{equation}
where $\bm{t}$ denotes the unit tangential vector to the element boundaries. For further details the reader is referred to, e.g., \cite{Monk-03}.
Now let $F: \hat{K} \rightarrow \mathbb{R}^{2}$ be a diffeomorphism of the reference element $\hat{K}$ onto a physical element $K$, i.e.,
$K = F(\hat{K})$. By $\mathcal{J}$ we denote the Jacobian matrix of the mapping, and by $\mathcal{J}_{D}$ its determinant, which are defined as
\begin{equation*}
\mathcal{J} =
\left( \begin{array}{cc}
\partial_{\hat{x}} x & \partial_{\hat{y}} x\\
\partial_{\hat{x}} y & \partial_{\hat{y}} y
\end{array}
\right),
\quad
\mathcal{J}_{D} = \vert \mathrm{det} \mathcal{J}\vert = \partial_{\hat{x}} x ~\partial_{\hat{y}} y - \partial_{\hat{y}} x ~\partial_{\hat{x}} y > 0.
\end{equation*}
Then we have the following transformation relations:
\begin{align}
\bw = \mathcal{J}^{-T} \bm{\hat{w}}; \quad
\curl ~\bw = \mathcal{J}_{D}^{-1} \curl ~\bm{\hat{w}},
\quad \forall \bm{w} \in H(K, \curl \!), \bm{\hat{w}} \in H(\hat{K}, \curl \!).
\label{eq:VecTrans_2D}
\end{align}
The vector transformation $\bm{w} \rightarrow \mathcal{J}^{-T} \bm{\hat{w}}$ is called the covariant transformation, and $\curl ~\bw = \mathcal{J}_{D}^{-1} \curl ~\bm{\hat{w}}$ is obtained via the well known Piola transformation $\bm{w} \rightarrow \mathcal{J}_{D}^{-1} \mathcal{J} \bm{\hat{w}}$.
We denote the element matrix for $\int_{K} \bu \cdot \bv $ by $L_{K}$, and for $\int_{K} \curl ~\bu ~ \curl ~\bv $ by $C_{K}$. For the $\N$ space based on uniform mesh composed of square elements, the element matrices $L_{K}$ and $C_{K}$ have the following structure
\begin{align}
L_{K} = \frac{1}{6} \left [
\begin{array}{rrrr}
2 & 1 & 0 & 0 \\
1 & 2 & 0 & 0 \\
0 & 0 & 2 & 1 \\
0 & 0 & 1 & 2 \\
\end{array}
\right ],
\quad
C_{K} = \frac{1}{h^{2}} \left [
\begin{array}{rrrr}
1 & -1 & -1 & 1 \\
-1 & 1 & 1 & -1 \\
-1 & 1 & 1 & -1 \\
1 & -1 & -1 & 1 \\
\end{array}
\right ].
\label{eq:ElmMatLkCk}
\end{align}
The overall element matrix $A_{K, C} := \alpha L_{K} + \beta C_{K}$, is thus given by
\begin{align}
A_{K, C} = \frac{1}{6 h^{2}} \left [
\begin{array}{cccc}
2 \alpha h^{2} + 6 \beta & \alpha h^{2} -6 \beta & -6 \beta & 6 \beta \\
\alpha h^{2} -6 \beta & 2 \alpha h^{2} + 6 \beta & 6 \beta & -6 \beta \\
-6 \beta & 6 \beta & 2 \alpha h^{2} + 6 \beta & \alpha h^{2} -6 \beta \\
6 \beta & -6 \beta & \alpha h^{2} -6 \beta & 2 \alpha h^{2} + 6 \beta \\
\end{array}
\right ].
\label{eq:ElmMatAk_2D}
\end{align}
Letting $e = \kappa h^{2}$, with $\kappa = \alpha/\beta$, the element matrix can be written as
\begin{align}
A_{K, C} = \frac{\beta}{6 h^{2}} \left [
\begin{array}{cccc}
2e+6 & e-6 & -6 & 6 \\
e-6 & 2e+6 & 6 & -6 \\
-6 & 6 & 2e+6 & e-6 \\
6 & -6 & e-6 & 2e+6 \\
\end{array}
\right ].
\label{eq:ElmMatAk_k_2D}
\end{align}
Clearly, for all $\alpha, \beta \in \R^{+}$, and thus $\kappa \in \R^{+}$, we have $e > 0$. Note that for fixed $\kappa$, and $h \rightarrow 0$, the element matrix $A_{K, C}$ is dominated by the matrix $C_{K}$ (which has a non-zero kernel), whereas for moderate values of $h$ it is a regular matrix. The near-nullspace of the matrix $A_{K, C}$ is given by the nullspace of the matrix $C_{K}$, which is associated with the local bilinear form $\mC_K (\bm{u},\bm{v}) := (\curl ~\bm{u}, \curl ~\bm{v})_K.$ As we shall see in the analysis, the proposed method is of optimal order for all $0 < \alpha, \beta < \infty$.
The following result can now be easily shown using \cite[Lemma~2.1]{KrausTomar-11}.
\begin{lemma}{\rm (Near-nullspace of matrix $A_{K, C}$).}\label{lem1_2D}
The element matrix $A_{K, C}$ given in \eqref{eq:ElmMatAk_2D} is symmetric positive definite (SPD). Moreover, the nullspace of the matrix $C_K$ for a general element $K$ with nodal coordinates $(x_i,y_i)$, $i \in \{1,2,3,4\}$ is given by
\begin{equation}
\ker (C_K) = {\rm span} \{ (1,1,0,0)^T,(0,0,1,1)^T,(x_1, x_2, y_3, y_4)^T \} .
\end{equation}
Furthermore, in case of a uniform mesh composed of square $\N$ elements, the matrix $C_K$ is same for each element $K$ and its nullspace is given by
\[\ker (C_K) = {\rm span} \{ (1,1,0,0)^T, (0,0,1,1)^T, (-1,0,0,1)^T \}.\]
\end{lemma}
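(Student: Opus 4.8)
The plan is to establish the three assertions separately, using the constancy of the lowest-order curl together with the curl--div correspondence and the cited result \cite[Lemma~2.1]{KrausTomar-11}.

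For the SPD claim I would argue purely algebraically. The matrix $L_{K}$ is the element mass matrix, i.e. the Gram matrix of the four (linearly independent) local basis functions in $L^{2}(K)$, hence SPD; directly from \eqref{eq:ElmMatLkCk} its two diagonal blocks have eigenvalues $\tfrac12,\tfrac16>0$. The matrix $C_{K}$ is the Gram matrix of the curls, so that $\bm{c}^{T}C_{K}\bm{c}=\|\curl\,\bm{v}_{\bm{c}}\|_{L^{2}(K)}^{2}\ge 0$, i.e. $C_{K}$ is positive semidefinite. Consequently, for $\alpha,\beta>0$ and any $\bm{c}\neq\bm{0}$ one has $\bm{c}^{T}A_{K,C}\bm{c}=\alpha\,\bm{c}^{T}L_{K}\bm{c}+\beta\,\bm{c}^{T}C_{K}\bm{c}\ge\alpha\,\bm{c}^{T}L_{K}\bm{c}>0$, which gives the SPD property at once.

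For the kernel of $C_{K}$ the starting observation is that for a lowest-order field the scalar curl is constant on $\hat{K}$: from \eqref{eq:N0} one reads off $\curl\,\bm{v}_{0}=v_{4}-v_{2}$, and by the Piola relation \eqref{eq:VecTrans_2D} the physical curl equals $\mathcal{J}_{D}^{-1}(v_{4}-v_{2})$. Since $\mathcal{J}_{D}>0$, the curl vanishes identically precisely when $v_{2}=v_{4}$; hence the curl-free fields form a three-dimensional subspace, which is exactly $\ker(C_{K})$, so that $C_{K}$ has rank one. On the simply connected $K$ these curl-free fields are the gradients $\bgrad\,\phi$ of the (bi)linear potentials, and the tangential edge degree of freedom of such a gradient is the endpoint difference of $\phi$. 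Letting $\phi$ run through the two linear coordinate potentials yields the constant modes $(1,1,0,0)^{T}$ and $(0,0,1,1)^{T}$, while the bilinear potential yields the remaining generator; here the endpoint differences bring in the nodal coordinates and produce the geometry-dependent vector $(x_{1},x_{2},y_{3},y_{4})^{T}$. The precise form of this last generator is where I would lean on \cite[Lemma~2.1]{KrausTomar-11}: via the $90^{\circ}$ rotation recalled in the introduction, which turns a Nedelec field into a Raviart--Thomas field and sends $\curl$ to $\dvg$ up to sign, $\ker(C_{K})$ is mapped onto the kernel of the RT divergence matrix described there in terms of $(x_{i},y_{i})$, and rotating back gives the claimed third generator. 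For the uniform square mesh I would simply specialize: the explicit $C_{K}$ in \eqref{eq:ElmMatLkCk} is the rank-one matrix $h^{-2}\bm{w}\bm{w}^{T}$ with $\bm{w}=(1,-1,-1,1)^{T}$, so $\ker(C_{K})=\{\bm{c}:c_{1}-c_{2}-c_{3}+c_{4}=0\}$, and it suffices to check that $(1,1,0,0)^{T}$, $(0,0,1,1)^{T}$, $(-1,0,0,1)^{T}$ each lie in this hyperplane and are linearly independent.

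The step I expect to be the main obstacle is pinning down the exact geometry-dependent generator $(x_{1},x_{2},y_{3},y_{4})^{T}$ for a general quadrilateral. The delicate points are to fix the edge/vertex labelling and orientation so that the endpoint differences of the bilinear potential collapse to exactly this pattern, and to make the rotation convention (including signs) match that of \cite[Lemma~2.1]{KrausTomar-11}. A convenient consistency check is that this generator must lie in the hyperplane $\{c_{1}-c_{2}-c_{3}+c_{4}=0\}$ and, for the square, reduce modulo the two constant modes to a multiple of $(-1,0,0,1)^{T}$.
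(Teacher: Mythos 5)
Your proposal is correct and follows essentially the same route as the paper, which offers no written proof beyond the remark that the result ``can now be easily shown using \cite[Lemma~2.1]{KrausTomar-11}'': like the paper, you obtain the geometry-dependent generator $(x_1,x_2,y_3,y_4)^T$ by transporting the Raviart--Thomas kernel of the cited lemma through the $90^{\circ}$ rotation. Your additional verifications---positive definiteness of $L_K$ via its $2\times 2$ blocks, the constant-curl computation $\curl\,\bm{v}_0 = v_4 - v_2$ showing that $C_K$ has rank one, and the explicit factorization $C_K = h^{-2}\bm{w}\bm{w}^T$ with $\bm{w} = (1,-1,-1,1)^T$ for the uniform mesh---are sound and merely make explicit what the paper leaves to the reader.
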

\begin{remark}\label{remC_2D}
When using the lowest order Nedelec elements, the matrix $C_K$ is always of rank one. In the global assembly this yields a matrix $C$ whose rank equals the number of elements in the mesh. That is, the kernel of the global matrix $C$ has dimension ${\rm dim} ( {\rm ker} (C)) = n_E - n_K$, where $n_E$ denotes the number of faces and $n_K$ denotes the number of elements in the finite element mesh.
Thereby, the dimension of the kernel is slightly more than half of the total number of degrees of freedom.
\end{remark}
\subsection{Finite element discretization using Raviart-Thomas-Nedelec elements}
\label{sec:DiscreteRTN}

We consider the tessellation of $\Omega \subset \R^{3}$ using cubic elements, and choose the reference element $\hat{K}$ as $[-1,1]^{3}$. Let $P_{r_{x},r_{y},r_{z}}(\hat{K})$ denote the space of polynomials of degree $\leq r_{x}$ in $x$, $\leq r_{y}$ in $y$ and $\leq r_{z}$ in $z$, respectively. Also, let $P_{r_{1},r_{2}}(\partial \hat{K})$ denote the space of polynomials of degrees $\leq r_{1}$ and $\leq r_{2}$ in the respective dimensions on $\partial \hat{K}$. For the construction of $\mVh$, we use the space of lowest-order Raviart-Thomas-Nedelec elements, which is denoted by $\RTN$. The space $\RTN(\hat{K})$ is defined as
\begin{align}
\label{eq:RTN0}
\RTN(\hat{K})
& = P_{1,0,0}(\hat{K}) \times P_{0,1,0}(\hat{K}) \times P_{0,0,1}(\hat{K})
= \left\{
\bm{v}(\hat{x},\hat{y},\hat{z})
= \left[ \begin{array}{c}
v_{1} + v_{2} \hat{x} \\
v_{3} + v_{4} \hat{y} \\
v_{5} + v_{6} \hat{z}
\end{array} \right]
\right\}.
\end{align}
Thus, the local basis for $\RTN$ has dimension $6$. Moreover, for $\bm{v}_{0} \in \RTN(\hat{K})$ we have
\begin{equation}
\dvg ~\bm{v}_{0} \in P_{0,0,0}~, \quad \bm{v}_{0} \cdot \bm{n} \vert_{\partial \hat{K}} \in P_{0,0}(\partial \hat{K}),
\end{equation}
where $\bm{n}$ denotes the unit normal vector to the element faces. For further details the reader is referred to, e.g., \cite{BrezziFortin-91}.
Now let $F: \hat{K} \rightarrow \mathbb{R}^{3}$ be a diffeomorphism of the reference element $\hat{K}$ onto a physical element $K$, i.e.,
$K = F(\hat{K})$. By $\mathcal{J}$ we denote the Jacobian matrix of the mapping, and by $\mathcal{J}_{D}$ its determinant, which are defined as
\begin{equation*}
\mathcal{J} =
\left( \begin{array}{ccc}
\partial_{\hat{x}} x & \partial_{\hat{y}} x & \partial_{\hat{z}} x \\
\partial_{\hat{x}} y & \partial_{\hat{y}} y & \partial_{\hat{z}} y \\
\partial_{\hat{x}} z & \partial_{\hat{y}} z & \partial_{\hat{z}} z
\end{array}
\right),
\quad
\mathcal{J}_{D} = \vert \mathrm{det} \mathcal{J}\vert > 0.
\end{equation*}
Then we have the following relations:
\begin{align}
\bw = \mathcal{J}_{D}^{-1} \mathcal{J} \bm{\hat{w}}; \quad
\dvg ~\bw = \mathcal{J}_{D}^{-1} \dvg ~\bm{\hat{w}},
\quad \forall \bm{w} \in H(K, \dvg), \bm{\hat{w}} \in H(\hat{K}, \dvg),
\label{eq:VecTrans_3D}
\end{align}
by the well known Piola transformation, see e.g., \cite{BrezziFortin-91}.
We denote the element matrix for $\int_{K} \bu \cdot \bv $ by $L_{K}$, and for $\int_{K} \dvg ~\bu ~ \dvg ~\bv $ by $D_{K}$. For the $\RTN$ space based on uniform mesh composed of cubic elements, the element matrices $L_{K}$ and $D_{K}$ have the following structure
\begin{align}
L_{K} = \frac{1}{6h} \left [
\begin{array}{rrrrrr}
2 & 1 & 0 & 0 & 0 & 0 \\
1 & 2 & 0 & 0 & 0 & 0 \\
0 & 0 & 2 & 1 & 0 & 0 \\
0 & 0 & 1 & 2 & 0 & 0 \\
0 & 0 & 0 & 0 & 2 & 1 \\
0 & 0 & 0 & 0 & 1 & 2
\end{array}
\right ],
\quad
D_{K} = \frac{1}{h^{3}} \left [
\begin{array}{rrrrrr}
1 & -1 & 1 & -1 & 1 & -1 \\
-1 & 1 & -1 & 1 & -1 & 1 \\
1 & -1 & 1 & -1 & 1 & -1 \\
-1 & 1 & -1 & 1 & -1 & 1 \\
1 & -1 & 1 & -1 & 1 & -1 \\
-1 & 1 & -1 & 1 & -1 & 1
\end{array}
\right ].
\label{eq:ElmMatBkCk}
\end{align}
The overall element matrix $A_{K, D} := \alpha L_{K} + \beta D_{K}$, is thus given by
\begin{align}
A_{K, D} = \frac{1}{6 h^{3}} \left [
\begin{array}{cccccc}
2 \alpha h^{2} + 6 \beta & \alpha h^{2} -6 \beta & 6 \beta & -6 \beta & 6 \beta & -6 \beta \\
\alpha h^{2} -6 \beta & 2 \alpha h^{2} + 6 \beta & -6 \beta & 6 \beta & -6 \beta & 6 \beta \\
6 \beta & -6 \beta & 2 \alpha h^{2} + 6 \beta & \alpha h^{2} -6 \beta & 6 \beta & -6 \beta \\
-6 \beta & 6 \beta & \alpha h^{2} -6 \beta & 2 \alpha h^{2} + 6 \beta & -6 \beta & 6 \beta \\
6 \beta & -6 \beta & 6 \beta & -6 \beta & 2 \alpha h^{2} + 6 \beta & \alpha h^{2} -6 \beta \\
-6 \beta & 6 \beta & -6 \beta & 6 \beta & \alpha h^{2} -6 \beta & 2 \alpha h^{2} + 6 \beta
\end{array}
\right ].
\label{eq:ElmMatAk_3D}
\end{align}
With the definition of $e$ introduced before \eqref{eq:ElmMatAk_k_2D}, the element matrix can be written as
\begin{align}
A_{K, D} = \frac{\beta}{6 h^{3}} \left [
\begin{array}{cccccc}
2e+6 & e-6 & 6 & -6 & 6 & -6 \\
e-6 & 2e+6 & -6 & 6 & -6 & 6 \\
6 & -6 & 2e+6 & e-6 & 6 & -6 \\
-6 & 6 & e-6 & 2e+6 & -6 & 6 \\
6 & -6 & 6 & -6 & 2e+6 & e-6 \\
-6 & 6 & -6 & 6 & e-6 & 2e+6
\end{array}
\right ].
\label{eq:ElmMatAk_k_3D}
\end{align}
Note again that for fixed $\kappa$, and $h \rightarrow 0$, the element matrix $A_{K, D}$ is dominated by the matrix $D_{K}$ (which has a non-zero kernel), whereas for moderate values of $h$ it is a regular matrix. The near-nullspace of the matrix $A_{K, D}$ is given by the nullspace of the matrix $D_{K}$, which is associated with the local bilinear form $\mD_K (\bm{u},\bm{v}) := (\dvg ~\bm{u}, \dvg ~\bm{v})_K.$ As we shall see in the analysis, the proposed method is of optimal order for all $0 < \alpha, \beta < \infty$.
\begin{proposition}{\rm (Near-nullspace of matrix $A_{K, D}$).}\label{lem1_3D}
The element matrix $A_{K, D}$ given in \eqref{eq:ElmMatAk_3D} is symmetric positive definite (SPD).
Furthermore, in case of a uniform mesh composed of cubic $\RTN$ elements, the matrix $D_K$ is same for each element $K$ and its nullspace is given by
\begin{align*}
& \ker (D_K) \\
= & {\rm span} \{ (1, 1, 0, 0, 0, 0)^{T}, (-1, 0, 1, 0, 0, 0)^{T}, (1, 0, 0, 1, 0, 0)^{T}, (-1, 0, 0, 0, 1, 0)^{T}, (1, 0, 0, 0, 0, 1)^{T} \}.
\end{align*}
\end{proposition}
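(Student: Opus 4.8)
The plan is to prove the two assertions separately: first that $A_{K,D}$ is SPD, then that the displayed five vectors form a basis for $\ker(D_K)$.

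\textbf{SPD property.} Since $A_{K,D} = \alpha L_K + \beta D_K$ with $\alpha,\beta \in \R^{+}$, it suffices to show $L_K$ is SPD and $D_K$ is symmetric positive semidefinite, because a positive combination of an SPD matrix and a positive semidefinite matrix is SPD. Symmetry of both is immediate from \eqref{eq:ElmMatBkCk}. For $L_K$, I would note that it is block diagonal with three identical $2\times 2$ blocks $\frac{1}{6h}\bigl[\begin{smallmatrix} 2 & 1 \\ 1 & 2 \end{smallmatrix}\bigr]$, each of which has eigenvalues $\frac{1}{6h}$ and $\frac{3}{6h}$, hence is SPD; therefore $L_K$ is SPD. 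For $D_K$, the cleaner route is the variational one: $D_K$ is the element matrix of the local form $\mD_K(\bu,\bv) = (\dvg~\bu,\dvg~\bv)_K$, so $\bv^{T} D_K \bv = \|\dvg~\bv_h\|_{L^2(K)}^2 \geq 0$ for the corresponding finite element function $\bv_h$, giving positive semidefiniteness directly. Combining, $A_{K,D}$ is SPD for all $\alpha,\beta > 0$.

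\textbf{Nullspace of $D_K$.} The key structural observation is that every row of $D_K$ in \eqref{eq:ElmMatBkCk} is $\pm$ the single vector $\bm{r} := (1,-1,1,-1,1,-1)^{T}$, so $D_K = \frac{1}{h^3}\bm{r}\bm{r}^{T}$ is a rank-one matrix. Consequently $D_K \bv = 0$ if and only if $\bm{r}^{T}\bv = 0$, i.e.\ $\ker(D_K)$ is exactly the hyperplane $\{\bv : v_1 - v_2 + v_3 - v_4 + v_5 - v_6 = 0\}$, which has dimension $5$. It then remains to verify that the five listed vectors lie in this hyperplane and are linearly independent. Each listed vector is checked against $\bm{r}^{T}\bv = 0$ by inspection: for $(1,1,0,0,0,0)^{T}$ we get $1-1=0$, and for each of the remaining four vectors of the form $\pm e_1 + e_k$ the alternating sum likewise cancels; so all five lie in $\ker(D_K)$. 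Linear independence follows since, after dropping the first coordinate, the remaining five coordinates of the five vectors form (up to the obvious permutation) the $5\times 5$ identity pattern, so the only linear combination vanishing is the trivial one. Five independent vectors in a $5$-dimensional space form a basis, which establishes the claimed spanning set.

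I do not expect a genuine obstacle here, as both parts reduce to elementary linear algebra once the rank-one structure of $D_K$ is recognized; the only point requiring care is the bookkeeping that the five exhibited vectors are independent and of the right number ($6-1=5$), which the identity-submatrix argument handles cleanly. The one conceptual step worth flagging is the interpretation of $\bv^{T}D_K\bv$ as $\|\dvg~\bv_h\|^2$, which both delivers positive semidefiniteness of $D_K$ without eigenvalue computation and explains, consistently with the discussion preceding the proposition, why the near-nullspace of $A_{K,D}$ coincides with the divergence-free local fields.
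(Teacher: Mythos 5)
Your proof is correct, and for the kernel it rests on the same key observation as the paper: $D_K$ is rank one with $D_K = \frac{1}{h^{3}}\,\bm{r}\bm{r}^{T}$, $\bm{r} = (1,-1,1,-1,1,-1)^{T}$, so $\ker(D_K)$ is the hyperplane orthogonal to $\bm{r}$. The differences are in logical order and completeness. For the SPD part, the paper gives only a one-line appeal to the positivity of $\alpha,\beta$ and to \eqref{eq:ModProbDiscrete} (implicitly, to the fact that $\mH$ is an inner product), whereas you give a self-contained matrix-level argument: $L_K$ is SPD because its three identical $2\times 2$ blocks $\frac{1}{6h}\bigl(\begin{smallmatrix} 2 & 1 \\ 1 & 2\end{smallmatrix}\bigr)$ have eigenvalues $\frac{1}{6h}$ and $\frac{3}{6h}$, $D_K$ is positive semidefinite via $\bv^{T} D_K \bv = \Vert \dvg\, \bv_h \Vert_{L^{2}(K)}^{2} \ge 0$, and a positive combination of an SPD and a PSD matrix is SPD. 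For the kernel, the paper argues in the reverse direction --- it asserts that $\bm{r}$ is orthogonal to the kernel and concludes ``it is clear that'' $D_K = c\,\bm{r}\bm{r}^{T}$ --- which presupposes part of what is being proved; you instead read the rank-one factorization directly off \eqref{eq:ElmMatBkCk} and then deduce the kernel, and you supply the two verifications the paper leaves entirely implicit: that each of the five listed vectors satisfies $\bm{r}^{T}\bv = 0$, and that they are independent (the identity submatrix after dropping the first coordinate), so that the count $6 - 1 = 5$ closes the argument. Your route buys a fully elementary, gap-free verification at the cost of a few extra lines; the paper buys brevity by leaning on the variational definitions and leaving the dimension count and independence to the reader.
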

\begin{proof}
Since the coefficients $\alpha$ and $\beta$ in \eqref{eq:ElmMatAk_3D} are positive, it follows from equation (\ref{eq:ModProbDiscrete}) that $A_{K, D}$ is SPD for a general element $K$.
Moreover, for a uniform mesh composed of cubic $\RTN$ elements, since the vector $(1,-1,1,-1,1,-1)^{T}$ is orthogonal to the kernel of $D_{K}$, it is clear that the rank-one matrix $D_{K}$ is of the form $c \cdot (1,-1,1,-1,1,-1)^T \cdot (1,-1,1,-1,1,-1)$, for some constant $c$.
\end{proof}

\begin{remark}\label{remC_3D}
When using the lowest order Raviart-Thomas-Nedelec elements, the matrix $D_{K}$ is always of rank one. In the global assembly this yields a matrix $D$ whose rank equals the number of elements in the mesh. That is, the kernel of the global matrix $D$ has dimension ${\rm dim} ( {\rm ker} (D)) = n_{F} - n_{K}$, where $n_{F}$ denotes the number of faces and $n_{K}$ denotes the number of elements in the finite element mesh.
Thereby, the dimension of the kernel is slightly more than two-third of the total number of degrees of freedom.
\end{remark}
\section{Algebraic multilevel iteration}
\label{sec:AMLI}

For the solution of the linear system arising from \eqref{eq:ModProbDiscrete}, we describe and analyze the AMLI method in the remainder of this section. Our presentation follows \cite{KrausTomar-11}.

\subsection{The AMLI procedure}
\label{sec:Prec}

In what follows we will denote by $M^{(\l)}$ a preconditioner for a finite element (stiffness) matrix $A^{(\l)}$ corresponding to a $\l$ times refined mesh $(0 \le \l \le L)$. We will also make use of the corresponding $\l^{\mathrm{th}}$ level hierarchical matrix $\hA^{(\l)}$, which is related to $A^{(\l)}$ via a two-level hierarchical basis (HB) transformation $J^{(\l)}$, i.e.,
\begin{equation}\label{NBHB}
\hA^{(\l)} = J^{(\l)} A^{(\l)} (J^{(\l)})^T .
\end{equation}
The transformation matrix $J^{(\l)}$ specifies the space splitting, and will be described in detail in Section~\ref{sec:HB}. By  $A^{(\l)}_{ij}$ and $\hA^{(\l)}_{ij}$, $1 \le i,j \le 2$, we denote the blocks of $A^{(\l)}$ and $\hA^{(\l)}$ that correspond to the fine-coarse partitioning of degrees of freedom (DOF) where the DOF associated with the coarse mesh are numbered last.
The aim is to build a multilevel preconditioner $M^{(L)}$ for the coefficient matrix $A^{(L)}:= A_h$ at the level of the finest mesh that has a uniformly bounded (relative) condition number
\[
\varkappa({M^{(L)}}^{-1} A^{(L)}) = {\mathcal O}(1),
\]
and an optimal computational complexity, that is, linear in the number of degrees of freedom $N_{L}$ at the finest mesh (grid). In order to achieve this goal hierarchical basis methods can be combined with various types of stabilization techniques.
One particular purely algebraic stabilization technique is the so-called Algebraic  Multi-Level Iteration (AMLI) method, which is presented below.
We have the following two-level hierarchical basis representation at level $\l$
\begin{equation}
\hat{A}^{(\l)} =
{\def\arraystretch{1.4}
\begin{bmatrix}
\hat{A}_{11}^{(\l)}&\hat{A}_{12}^{(\l)}\\
\hat{A}_{21}^{(\l)}&\hat{A}_{22}^{(\l)}\end{bmatrix}
= \begin{bmatrix}
\hat{A}_{11}^{(\l)}&\hat{A}_{12}^{(\l)}\\
\hat{A}_{21}^{(\l)}&A^{(\l-1)}\end{bmatrix}} .
\label{block_H}
\end{equation}
Starting at level $0$ (associated with the coarsest mesh), on which a complete LU factorization of the matrix $A^{(0)}$ is performed, we define
\begin{equation}\label{ml}
M^{(0)}:=A^{(0)}.
\end{equation}
Given the preconditioner $M^{(\l-1)}$ at level $\l-1$, the preconditioner $M^{(\l)}$ at level $\l$ is then defined by
\begin{equation}\label{mk}
M^{(\l)}:=L^{(\l)} U^{(\l)},
\end{equation}
where
\begin{equation}\label{rklk}
{\def\arraystretch{1.4}
L^{(\l)}:=\left[ \begin{array}{cc}
C_{11}^{(\l)} & 0 \\
\hA_{21}^{(\l)} & C_{22}^{(\l)}
\end{array} \right], \quad
U^{(\l)}:=\left[ \begin{array}{cc}
I & {C_{11}^{(\l)}}^{-1} \hA_{12}^{(\l)} \\
0 & I
\end{array} \right]} .
\end{equation}
Here $C_{11}^{(\l)}$ is a preconditioner for the pivot block $A_{11}^{(\l)}$, and
\begin{equation}\label{zk}
C_{22}^{(\l)} := {A}^{(\l-1)} \left( I - p^{(\l)} ( {M^{(\l-1)}}^{-1} A^{(\l-1)} ) \right)^{-1}
\end{equation}
is an approximation to the Schur complement $S=A^{(\l-1)}-\hA_{21}^{(\l)} {C_{11}^{(\l)}}^{-1} \hA_{12}^{(\l)}$, where $A^{(\l-1)} = \hA_{22}^{(\l)}$ is the stiffness matrix at the coarse level $\l-1$, and $p^{(\l)}$ is a certain stabilization polynomial of degree $\nu_\l$ satisfying the condition
\begin{equation}\label{pk}
0 \le p^{(\l)} (x) < 1, \quad \forall ~ 0 < x \le 1, \quad \mathrm{~and~} p^{(\l)} (0) = 1 .
\end{equation}
It is easily seen that (\ref{zk}) is equivalent to
\begin{equation}\label{zk1}
{C_{22}^{(\l)}}^{-1} =
{M^{(\l-1)}}^{-1} q^{(\l)} ( A^{(\l-1)} {M^{(\l-1)}}^{-1} ),
\end{equation}
where the polynomial $q^{(\l)}(x)$ is given by
\begin{equation}\label{q}
q^{(\l)}(x) = \frac{1 - p^{(\l)}(x)}{x}.
\end{equation}
We note that the multilevel preconditioner defined via (\ref{mk}) is getting close to a two-level method when $q^{(\l)}(x)$ closely approximates $1/x$, in which case ${C_{22}^{(\l)}}^{-1} \approx {A^{(\l-1)}}^{-1}$. In order to construct an efficient multilevel method the action of ${C_{22}^{(\l)}}^{-1}$ on an arbitrary vector should be much cheaper to compute (in terms of the number of arithmetic operations) than the action of ${A^{(\l-1)}}^{-1}$. Optimal order solution algorithms typically require that the arithmetic work for one application of ${C_{22}^{(\l)}}^{-1}$ is of the order ${\mathcal O}(N_{\l-1})$ where $N_{\l-1}$ denotes the number of unknowns at level $\l-1$. 
To reduce the overall complexity of AMLI methods (to achieve optimal computational complexity), various stabilization techniques can be used.
%
It is well known from the theory introduced in \cite{AxelssonV-89,AxelssonV-90} that a properly shifted and scaled Chebyshev polynomial $p^{(\l)} := p_{\nu_{\l}}$ of degree $\nu_{\l}$ can be used to stabilize the condition number of ${M^{(\ell)}}^{-1} A^{(\l)}$ (and thus obtain optimal order computational complexity). Other polynomials such as the best polynomial approximation of $1/x$ in uniform norm also qualify for stabilization, see, e.g., \cite{KrausVZ-12}.
This approach requires the computation of polynomial coefficients which depends on the bounds of the eigenvalues of the preconditioned system.
Alternatively, a few inner flexible conjugate gradient (FCG) type iterations are performed at coarse levels to stabilize (or freeze the residual reduction factor of) the outer FCG iteration, which lead to parameter-free AMLI methods \cite{AxelssonV-91, AxelssonV-94, Kraus-02, Notay-00, Notay-02, NotayV-08}. In general, the resulting nonlinear (variable step) multilevel preconditioning method is almost equally efficient as linear AMLI method, and, because its realization does not rely on any spectral bounds, it is easier to implement than the linear AMLI method (based on a stabilization polynomial). For a convergence analysis of nonlinear AMLI see, e.g., \cite{Kraus-02, KrausMargenov-09, Vassilevski-08}.
Typically, the iterative solution process is of optimal order of computational complexity if the degree $\nu_\l =\nu$ of the matrix polynomial (or alternatively, the number of inner iterations for nonlinear AMLI) at level $\l$ satisfies the optimality condition
\begin{alignat}{1}
1/\sqrt{(1- \gamma^{2})} < \, \nu  < \,\tau,
\label{eq:CondOpt}
\end{alignat}
where $\tau \approx \tau_\l = {N_{\l}}/{N_{\l-1}}$ denotes the reduction factor of the number of degrees of freedom (DOF), and $\gamma$ denotes the constant in the strengthened Cauchy-Bunyakowski-Schwarz (CBS) inequality. In case of standard (full) coarsening, the value of $\tau$ is approximately $4$ for the sequence of $\N$ spaces, and $8$ for the sequence of $\RTN$ spaces. These sequences will be constructed in the next subsections. For a more detailed discussion of AMLI methods, including implementation issues see, e.g., \cite{KrausMargenov-09, Vassilevski-08}.
\begin{remark}
The commonly used AMLI algorithm was originally introduced and studied in a multiplicative form \eqref{mk}, see \cite{AxelssonV-89, AxelssonV-90}. However, the preconditioner can also be constructed in the additive form, which is defined as follows \cite{Axelsson-99, AxelssonP-99, KrausMargenov-09}
\begin{equation}\label{mk_a}
M^{(\l)}_{A} := \left[ \begin{array}{cc}
C_{11}^{(\l)} & 0 \\
0 & C_{22}^{(\l)}
\end{array} \right].
\end{equation}
In this case the optimal order of computational complexity demands that the matrix polynomial degree (or the number of inner iterations of nonlinear AMLI) satisfy the following relation
\begin{alignat}{1}
\sqrt{(1 + \gamma)/(1- \gamma)} < \, \nu  < \,\tau.
\label{eq:CondOpt_Add}
\end{alignat}
\end{remark}
\subsection{Hierarchical basis for $\mVh$}
\label{sec:HB}

The AMLI methods we are considering here, for the solution of (\ref{eq:ModProbDiscrete}), are based on a proper splitting of the space $\mVh$. 
\begin{figure}[!ht]
\centering
\includegraphics[width=.5\textwidth, keepaspectratio, clip]{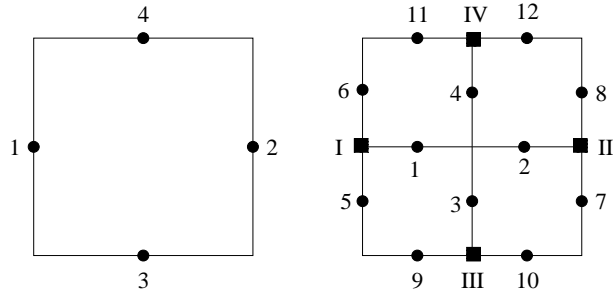}
\caption{Macro-element obtained after one regular mesh-refinement step}
\label{fig:MacroElm_Edges}
\end{figure}
For $\N$ subspace of $\Hcurl$, the particular two-level HB transformation that induces this splitting was introduced in the context of linear nonconforming Crouzeix-Raviart (CR) elements in \cite{BlahetaMN-04, BlahetaMN-05}. It was later studied for quadrilateral rotated bilinear (Rannacher-Turek) type elements in \cite{GKrausM-08}. Note that the similarities of the HB transformation when using CR elements and Nedelec elements is due to the algebraic nature of the problem. For the discretization based on linear elements (for meshes consisting of triangles) or bilinear elements (for meshes consisting of squares), similar HB transformation matrix can be used. However, suitable changes will be required when working with meshes consisting of general quadrilaterals.
Consider two consecutive discretizations $\mT_{H}$ (coarse level) and $\mTh$ (fine level). Figure~\ref{fig:MacroElm_Edges} illustrates a macro-element $G$ (at fine level) obtained from a coarse element by one regular mesh-refinement step.
Let  $\varphi_G=\{\phi_i(x,y)\}_{i=1}^{12}$ be the macro-element vector of the nodal basis functions. Using the local numbering of DOF, as shown in Figure~\ref{fig:MacroElm_Edges} (right picture), a macro-element level (local) transformation matrix $J_G$ is constructed based on differences and aggregates of each pair of basis functions $\phi_i$ and $\phi_j$ that correspond to a macro element edge, i.e.,
\begin{equation}\label{eq:J_G_N}
J_{G} = \frac12
\left [
\begin{array}{rrrrrrrrrrrr}
2&  &  &  &  &  &  &  &  &  &  &  \\
&  2&  &  &  &  &  &  &  &  &  &  \\
&  &  2&  &  &  &  &  &  &  &  &  \\
&  &   & 2&  &  &  &  &  &  &  &  \\
&  &  &  &  1& -1& &  &  &  &  &  \\
&  &  &  &  &  & 1& -1&  &  &  &  \\
&  &  &  &  &  &  &  & 1& -1&  &  \\
&  &  &  &  &  &  &  &  &  & 1&-1\\
&  &  &  & 1& 1&  &  &  &  &  &  \\
&  &  &  &  &  & 1& 1&  &  &  &  \\
&  &  &  &  &  &  &  & 1& 1&  &  \\
&  &  &  &  &  &  &  &  &  & 1& 1
\end{array}
\right ].
\end{equation}
%
%

%
\begin{figure}[!ht]
\centering
\includegraphics[width=.45\textwidth, keepaspectratio, clip]{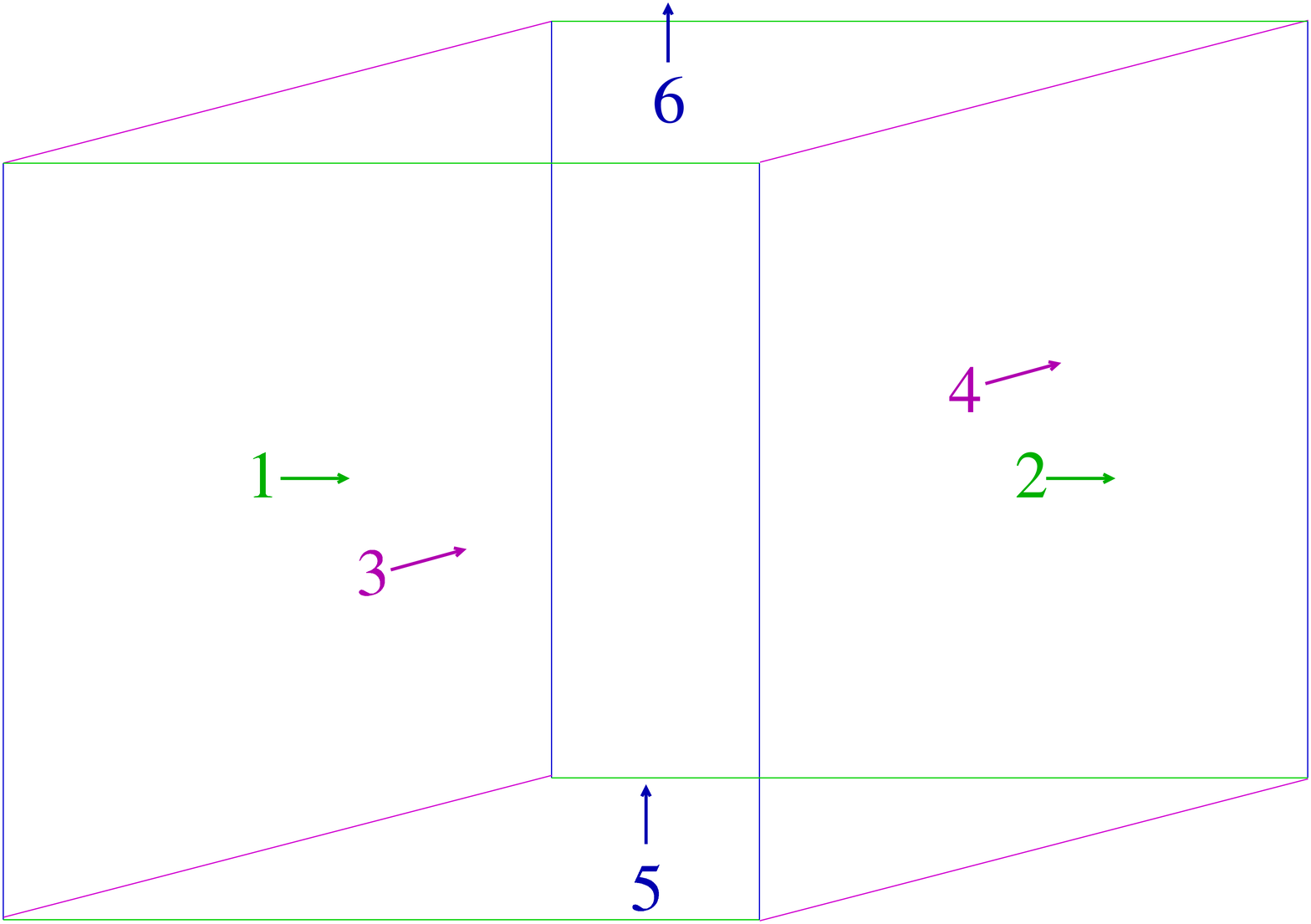} \hspace{10mm}
\includegraphics[width=.45\textwidth, keepaspectratio, clip]{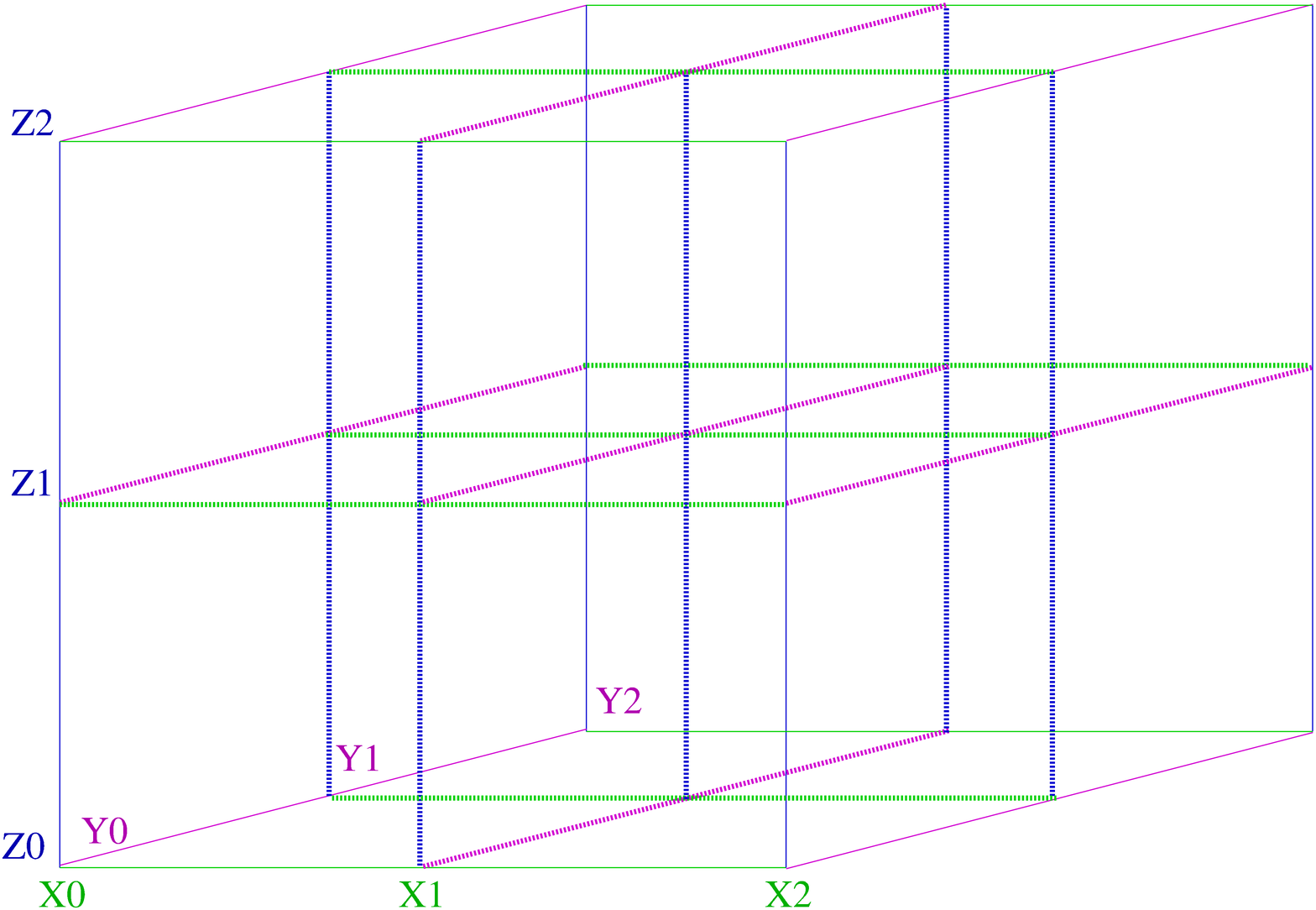}\\
\includegraphics[width=.2\textwidth, keepaspectratio, clip]{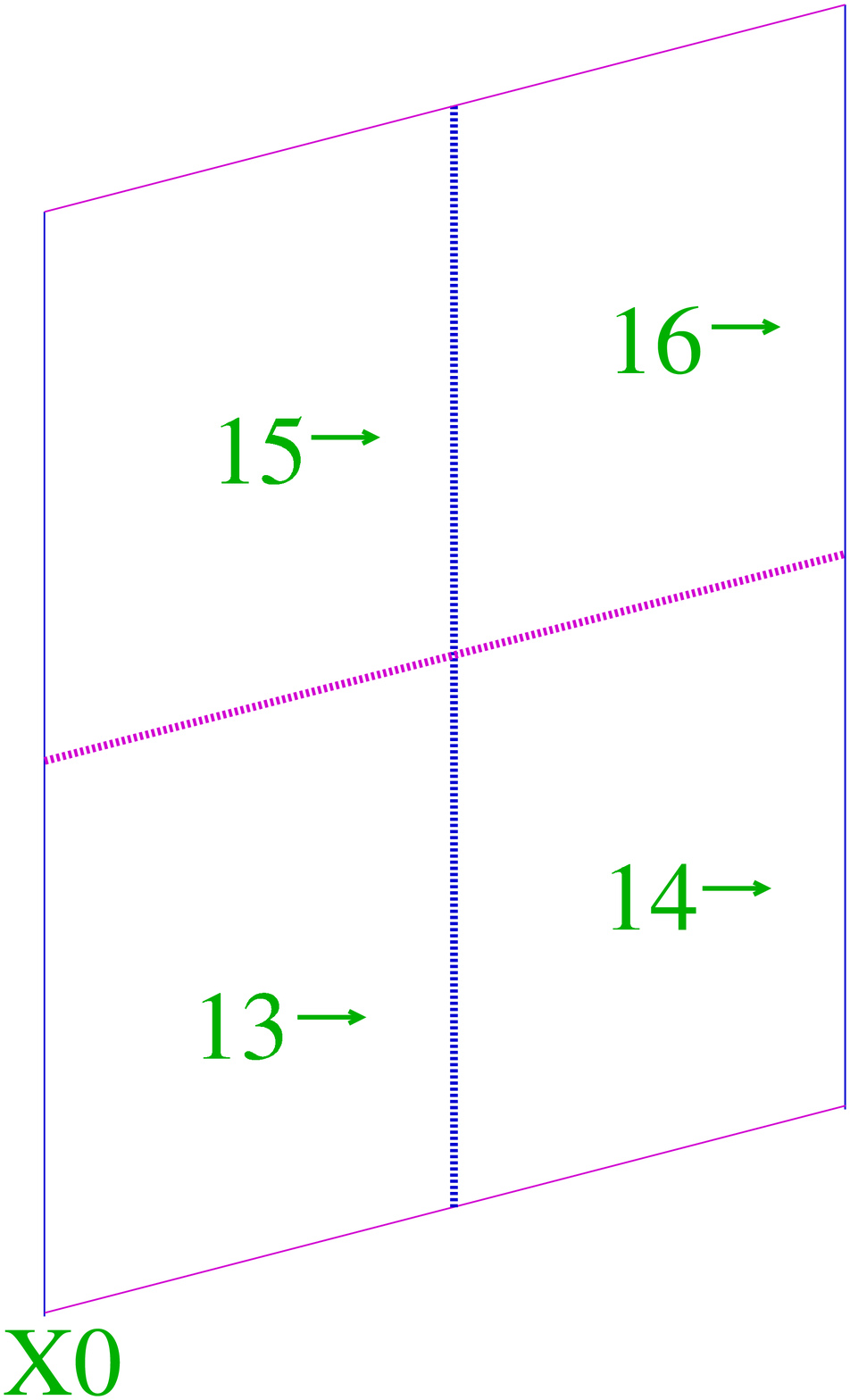} \hspace{5mm}
\includegraphics[width=.2\textwidth, keepaspectratio, clip]{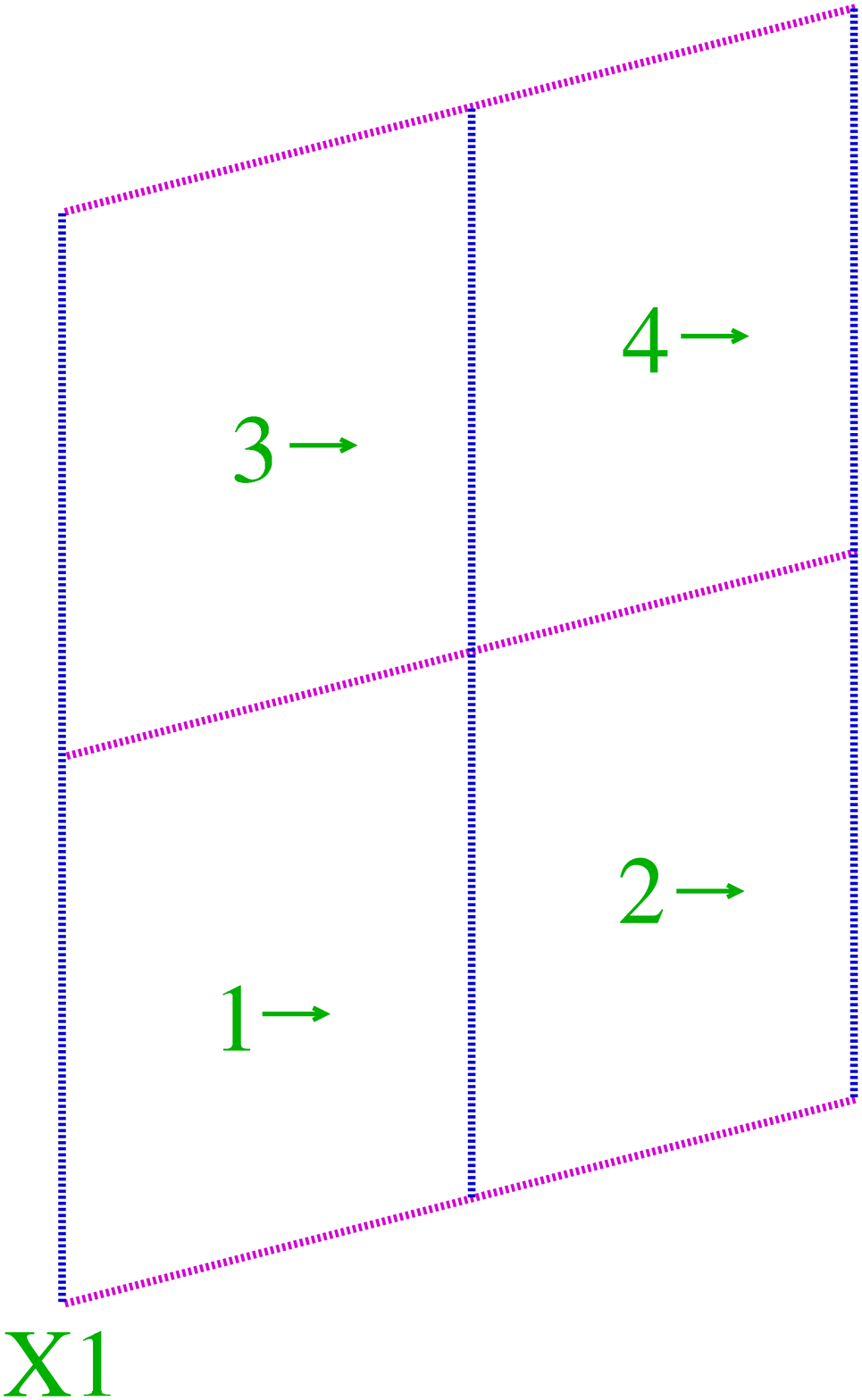} \hspace{5mm}
\includegraphics[width=.2\textwidth, keepaspectratio, clip]{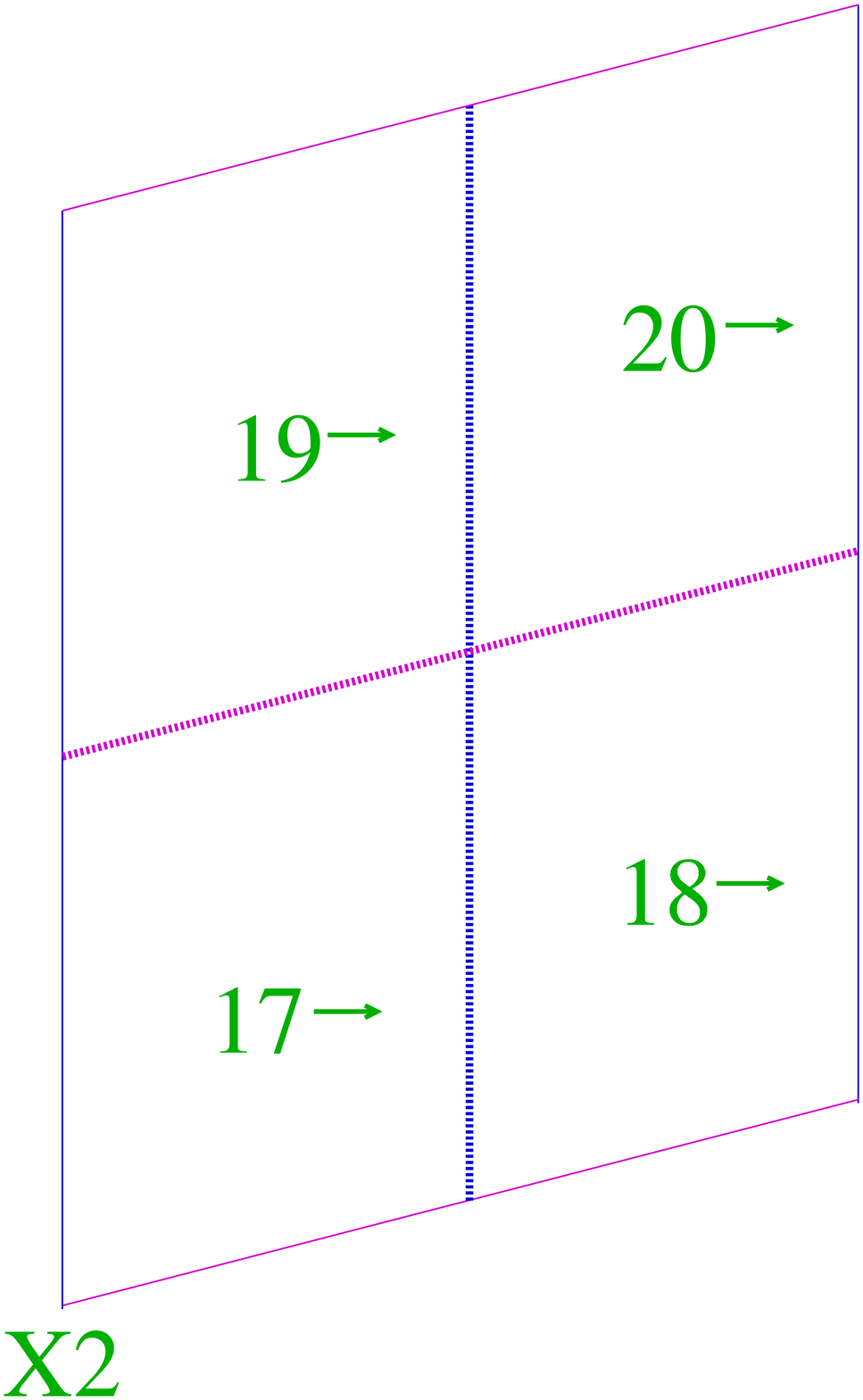}\\
\includegraphics[width=.25\textwidth, keepaspectratio, clip]{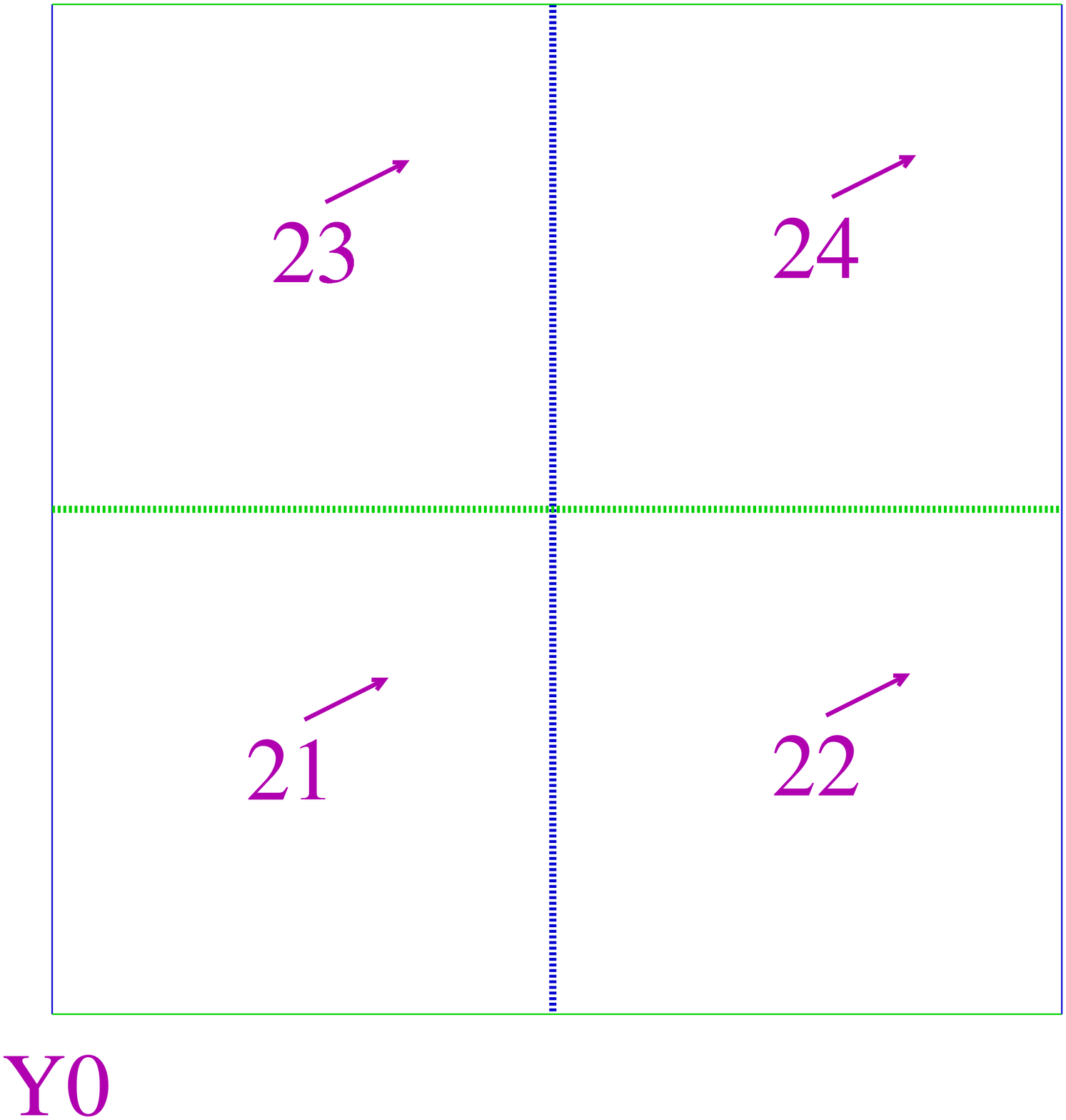} \hspace{5mm}
\includegraphics[width=.25\textwidth, keepaspectratio, clip]{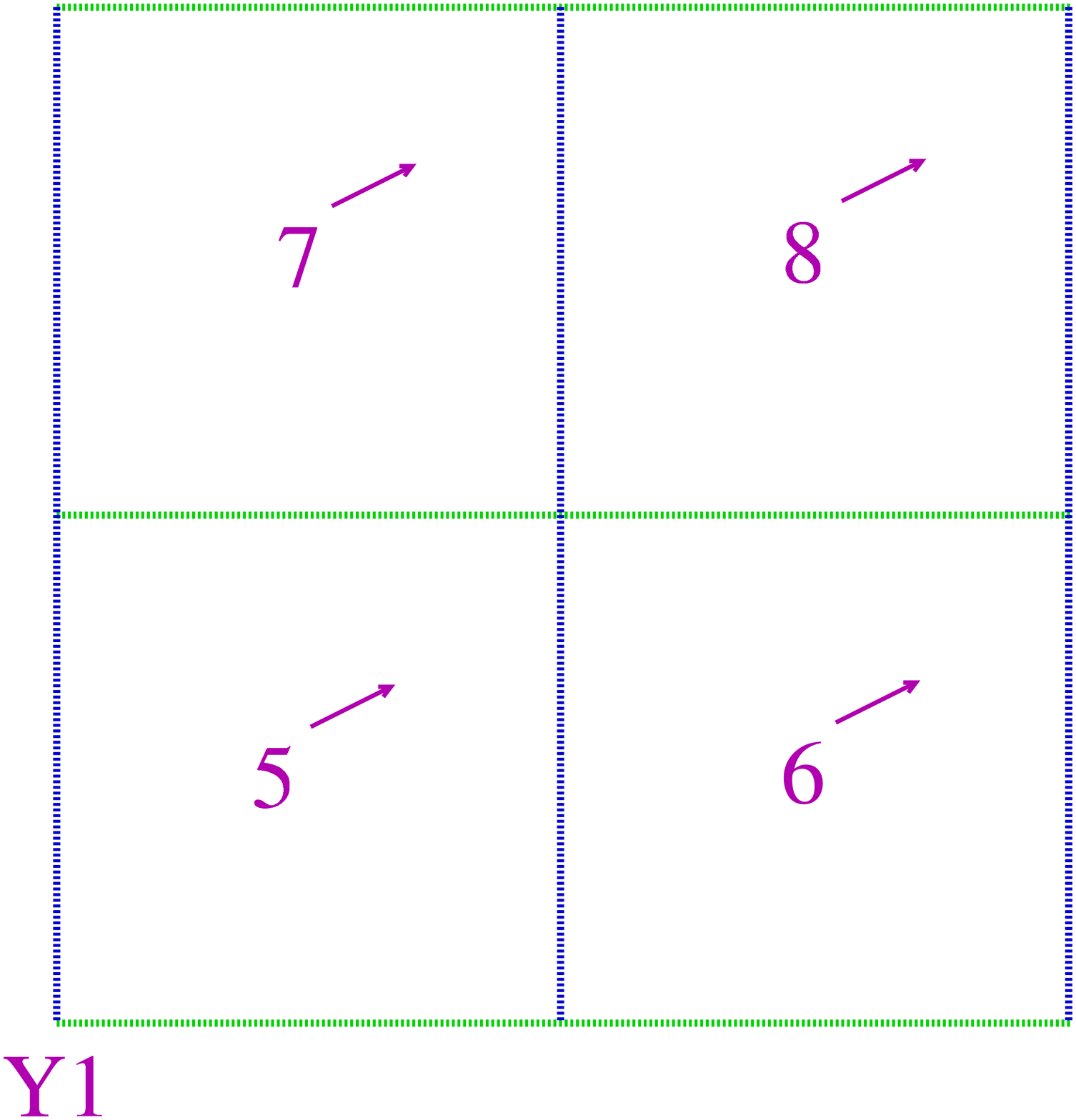} \hspace{5mm}
\includegraphics[width=.25\textwidth, keepaspectratio, clip]{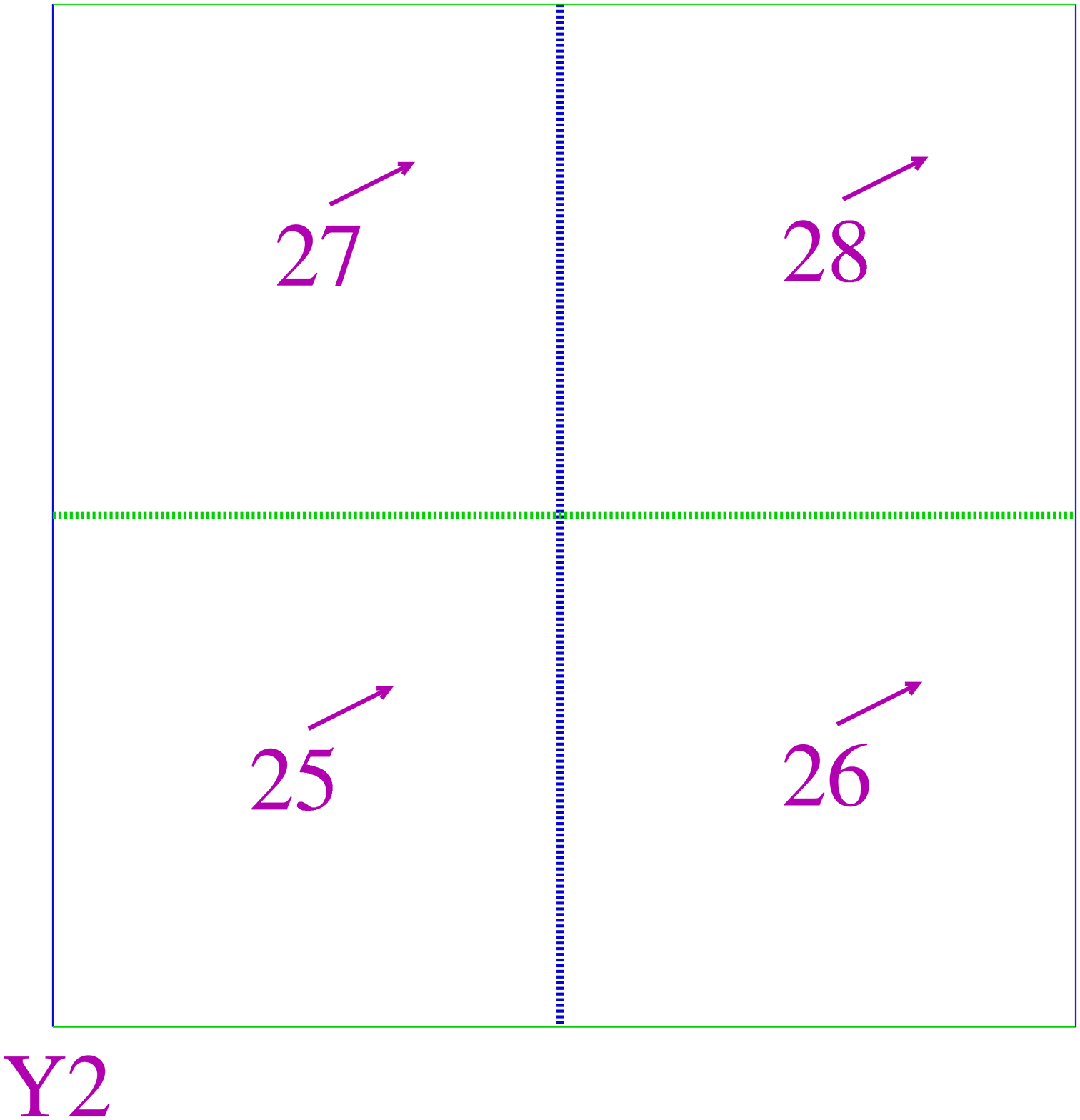}\\[2ex]
\includegraphics[width=.31\textwidth, keepaspectratio, clip]{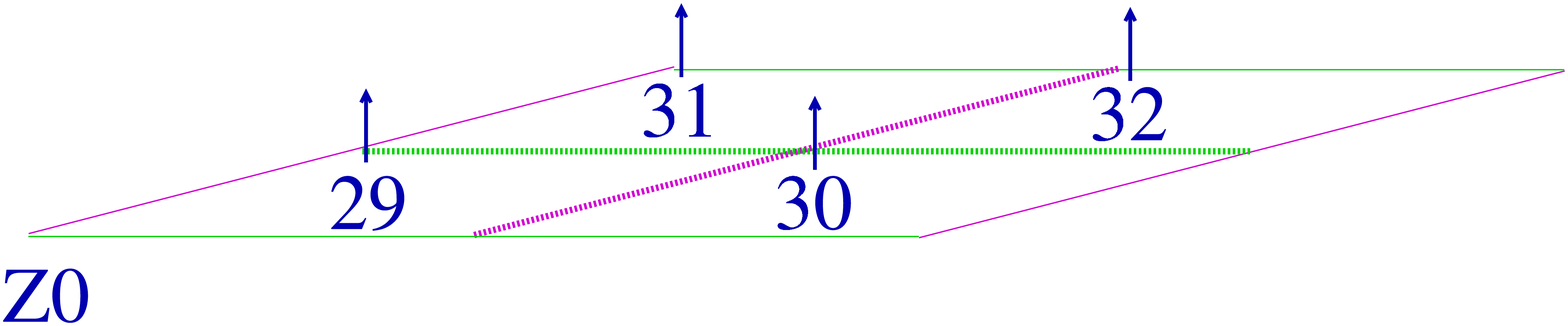} \hspace{3mm}
\includegraphics[width=.31\textwidth, keepaspectratio, clip]{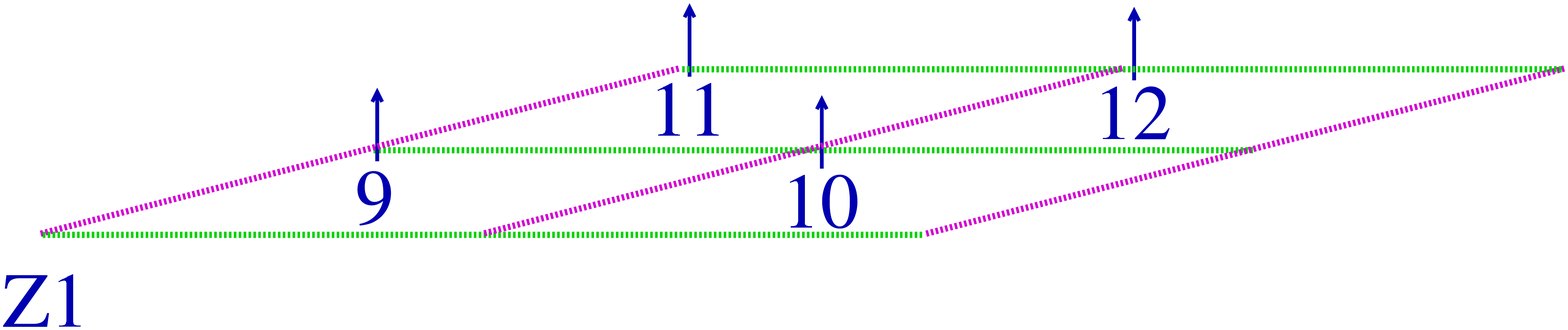} \hspace{3mm}
\includegraphics[width=.31\textwidth, keepaspectratio, clip]{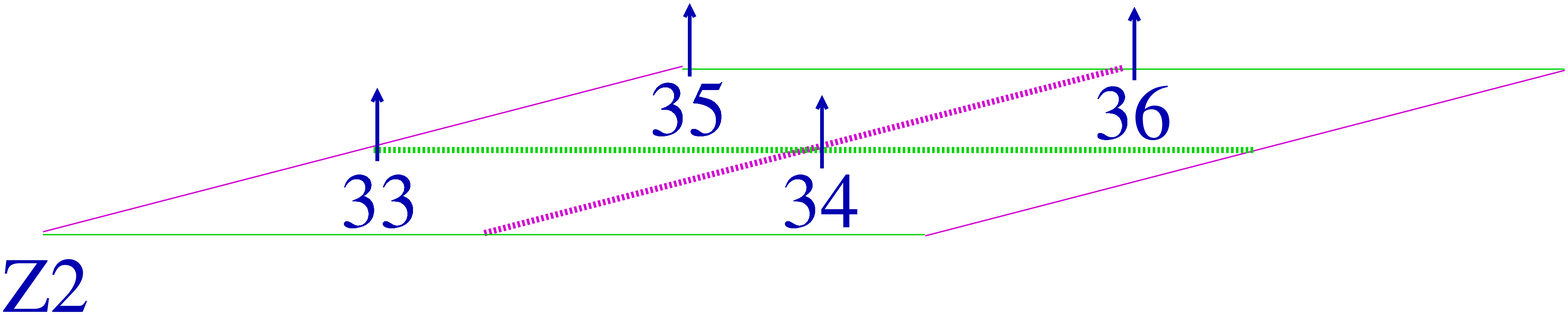}
\caption{Macro-element obtained after one regular mesh-refinement step}
\label{fig:MacroElm_Faces}
\end{figure}
For $\RTN$ subspace of $\Hdiv$, the particular two-level HB transformation, that induces this splitting, was introduced in the context of Rannacher-Turek elements for three-dimensional elliptic problems \cite{GKrausM-08b}.
Consider two consecutive discretizations $\mT_{H}$ (coarse level) and $\mTh$ (fine level). Figure~\ref{fig:MacroElm_Faces} illustrates a macro-element $G$ (at fine level) obtained from a coarse element by one regular mesh-refinement step. The colors green, magenta and blue represent the face directions and face DOFs for $x$, $y$ and $z$ directions, respectively.
Let $\varphi_{G} = \{\phi_i(x,y)\}_{i=1}^{36}$ be the macro-element vector of the nodal basis functions. Using the local numbering of DOF, as shown in Figure~\ref{fig:MacroElm_Faces} (second, third and fourth row of pictures), a macro-element level (local) transformation matrix $J_G$ is constructed based on differences and aggregates of basis functions $\phi_i$ and $\phi_j$ that correspond to a macro element face, i.e.,
\begin{subequations}\label{eq:J_G_RTN}
\begin{equation}
J_{G} = \frac{1}{4}
\left [
\begin{array}{rr}
4I & \\
& J_{G;22}
\end{array}
\right ],
\end{equation}
where $I$ is the $12 \times 12$ identity matrix and
\begin{equation}
J_{G;22} = \left [
\begin{array}{rrrrrr}
P_{D} & & & & & \\
& P_{D} & & & & \\
& & P_{D} & & & \\
& & & P_{D} & & \\
& & & & P_{D} & \\
& & & & & P_{D} \\
P_{A}^{1} & P_{A}^{2} & P_{A}^{3} & P_{A}^{4} & P_{A}^{5} & P_{A}^{6}
\end{array}
\right ].
\end{equation}
Here each block $P_{A}^{i}$, $i = 1, 2, \ldots, 6$, which reflects the basis functions obtained by aggregates, is a $6 \times 4$ matrix with all zeros except $i^{\mathrm{th}}$-row which has all ones. The block $P_{D}$, which reflects the orthogonal transformation to aggregates, and obtained by suitable combination of differences, is given by
\begin{equation}
P_{D} = \left [
\begin{array}{rrrr}
1 & -1 & 1 & -1 \\
1 & 1 & -1 & -1 \\
1 & -1 & -1 & 1
\end{array}
\right ].
\end{equation}
\end{subequations}
The transformations \eqref{eq:J_G_N}-\eqref{eq:J_G_RTN} define a two-level hierarchical basis $\hat{\varphi}_G$ locally, namely, $\hat{\varphi}_{G} = J_{G} \varphi_{G}$.
\subsection{Hierarchical splitting}
\label{sec:Splitting}
Let $A_G$ be the macro-element stiffness matrix corresponding to $G \in \mT = \mTh$. The global stiffness matrix $A_h$ can be written as
\[
A_h = \sum_{G\in\mT} R_G^T A_G R_G,
\]
where $R_G$ denotes the natural inclusion (canonical injection) of the matrix $A_G$ for all $G$ in $\mT$. Note that the matrix $A_{G}$ is of size $12 \times 12$ for two-dimensional $\Hcurl$ problem, and of size $36 \times 36$ for three-dimensional $\Hdiv$ problem.
Then the hierarchical two-level macro-element matrix is given by
\[
\hA_{G} = J_{G} A_{G} J_{G}^{T},
\]
and the related global two-level matrix can be obtained via assembling, i.e., $\hA_h = \sum_{G \in \mT} R_G^T \hA_G R_G$. Alternatively, one can compute the matrix $\hA_h$ via the triple matrix product
\begin{equation}\label{triple_mat_prod_3D}
\hA_h = J A_{h} J^{T} ,
\end{equation}
where the global transformation matrix $J$ is induced by the local transformations, i.e.,
\[
J\lvert_G = J_G, \quad \forall G \in \mT .
\]
In other words, global and local transformations are compatible in the sense that restricting $J$ to the DOF of any macro-element $G$ we obtain $J_G$.
Now, if we number those DOF that correspond to interior nodes of the macro elements first, the global two-level stiffness matrix $\hA_h$ has the $2 \times 2$ block structure
\begin{equation}\label{eq:hat_A}
\hA_h = \left[
\begin{array}{ll}
\hA_{11} & \hA_{12}  \\
\hA_{21} & \hA_{22}
\end{array}
\right],
\end{equation}
where $\hA_{11}$ corresponds to the \emph{interior unknowns}. We follow the \emph{first reduce} (FR) approach, see e.g., \cite{BlahetaMN-04, BlahetaMN-05, GKrausM-08, GKrausM-08b}, where these interior unknowns are first eliminated \emph{exactly}. This static condensation step can be written in the form
\begin{equation}\label{eq:hat_A_LU}
\hA_h =
\left[
\begin{array}{cc}
\hA_{11} & 0  \\
\hA_{21} & B
\end{array}
\right]
\left[
\begin{array}{cc}
I_1 & \hA_{11}^{-1} \hA_{12}  \\
0 & I_2
\end{array}
\right],
\end{equation}
with the Schur complement $B = \hA_{22} - \hA_{21} \hA_{11}^{-1} \hA_{12}$. Next, the matrix $B$ is partitioned into $2{\times}2$ blocks, i.e.,
\begin{equation}\label{eq:B2x2}
B = \left[
\begin{array}{ll}
B_{11} & B_{12}  \\
B_{21} & B_{22}
\end{array}
\right],
\end{equation}
where $B_{11}$ and $B_{22}$ correspond to the \emph{differences} and \emph{aggregates} of basis functions (associated with one macro-element edge or face), respectively. The matrix $B_{22}$ at level $\l$ then defines the coarse-grid matrix $A^{(\l-1)}$ in the AMLI hierarchy, cf.~(\ref{block_H}). This algorithm can be applied recursively on each level $\l=L,L-1,\ldots,1$. The resulting algorithm is then of optimal computational complexity, see e.g., \cite[Remark~3.1]{KrausTomar-11}.
\subsection{Local analysis}
\label{sec:LocalAnalysis}

In the two-level framework we denote by $\mV_{1}$ and $\mV_{2}$ the subspaces of the finite element space $\mV_h$. The space $\mV_{2}$ is spanned by the coarse-space basis functions (aggregates) and $\mV_{1}$ is the complement of $\mV_{2}$ in $\mV_h$, i.e., $\mV_h$ is a direct sum of $\mV_{1}$ and $\mV_{2}$:
\begin{equation}\label{Splitting:DS}
\mV_h = \mV_1 \oplus \mV_2.
\end{equation}
A measure for the quality of this splitting is the constant $\gamma$ in the strengthened CBS inequality, which is defined by the relation
\[
\gamma = \cos(\mV_{1},\, \mV_{2}) :=
\sup_{\begin{array}{l}\bm{u} \in \mV_{1},\;\bm{v}\in\mV_{2}\end{array}}
\displaystyle{\frac{\mH(\bm{u},\bm{v})}{\sqrt{\mH(\bm{u},\bm{u})\mH(\bm{v},\bm{v})}}} .
\]
It is well known (see, e.g., \cite{AxelssonG-83}) that $\gamma$ can be estimated locally over each macro element $G$, and that $\gamma = \max _{G} \gamma _{G}, $ where
\[
\gamma_{G} := \sup\limits_{\begin{array}{l}\bm{u}\in \mV_1(G),\; \bm{v}\in
\mV_{2}(G) \end{array}}
\displaystyle{\frac{\mH_{G}(\bm{u},\bm{v})}{\sqrt{\mH_{G}(\bm{u},\bm{u})\mH_{G}(\bm{v},\bm{v})}}} .
\]
The spaces $\mV_1(G)$, $\mV_2(G)$, and the bilinear form $\mH_{G}(\bm{u},\bm{v})$ correspond to the restriction of $\mV_1$, $\mV_2$, and $\mH(\bm{u},\bm{v})$, respectively, to the macro element $G$.
We perform this local analysis on the matrix level, where the splitting (\ref{Splitting:DS}) is obtained via the two-level hierarchical basis transformation described in Section~\ref{sec:HB}, and the space $\mV_h$ corresponds to the choice of lowest order Nedelec or Raviart-Thomas-Nedelec elements. In this setting the upper left block of $\hA_{h}$ is block-diagonal. Note that, for two-dimensional $\Hcurl$ problem, the diagonal blocks of $\hA_{11}$ are of size $4 \times 4$, which can be associated with the interior nodes $\{1, 2, \ldots, 4\}$ in the right picture of Figure~\ref{fig:MacroElm_Edges}, and for three-dimensional $\Hdiv$ problem, the diagonal blocks of $\hA_{11}$ are of size $12 \times 12$, which can be associated with the interior nodes $\{1, 2, \ldots, 12\}$ in the center column of second, third and fourth row of pictures in Figure~\ref{fig:MacroElm_Faces}.
Therefore, we first compute the local Schur complements arising from static condensation of the interior DOF and obtain the matrices $B_{G}$. Next we split each matrix $B_{G}$ as
\begin{equation*}
B_{G}=
\begin{bmatrix} B_{G,11} & B_{G,12} \\
B_{G,21} & B_{G,22}
\end{bmatrix}
\begin{array}{l}\} \mbox{ differences} \\
\} \mbox{ aggregates} \end{array},
\end{equation*}
written again in two-by-two block form. For two-dimensional $\Hcurl$ problem, the block $B_{G,11}$ and $B_{G,22}$ are both of size $4 \times 4$, and for three-dimensional $\Hdiv$ problem the block $B_{G,11}$ is of size $18 \times 18$ and the block $B_{G,22}$ is of size $6 \times 6$.
We have thus reduced the problem of estimating the CBS constant of the splitting (\ref{Splitting:DS}) to a small-sized local problem that involves the matrix $B_G$. Following the general theory, see \cite{AxelssonG-83, EijkhoutV-91}, to estimate the CBS constant $\gamma$, it suffices to compute the minimal eigenvalue of the generalized eigenproblem
\begin{equation}\label{LocGenEig}
S_G \textbf{v}_G = \lambda_{G,\min} B_{G,22} \textbf{v}_G, \quad \forall \textbf{v}_G,
\end{equation}
where $\, S_G = B_{G,22} - B_{G,21} B_{G,11}^{-1} B_{G,12}$. The CBS constant $\gamma$ can then be estimated as follows
\begin{equation}
\label{G_FR}
\gamma^2 \le \max_{G\in\mT}\gamma^2_G = \max_{G\in\mT} (1-\lambda_{G,\min}).
\end{equation}
Note that the matrix $B_{G,11}$ is a well conditioned matrix, see Figure~\ref{fig:cond_B11}, and therefore, it can be inverted cheaply, either by an iterative process or by, for example, an incomplete $LU$ factorization \cite{Saad-96}, which is denoted by $B_{11}^{i}$ in Figure~\ref{fig:cond_B11}.
\begin{figure}[!ht]
\centering
\begin{subfigure}[!ht]{.495\textwidth}
\centering
\includegraphics[width=.99\textwidth,keepaspectratio]{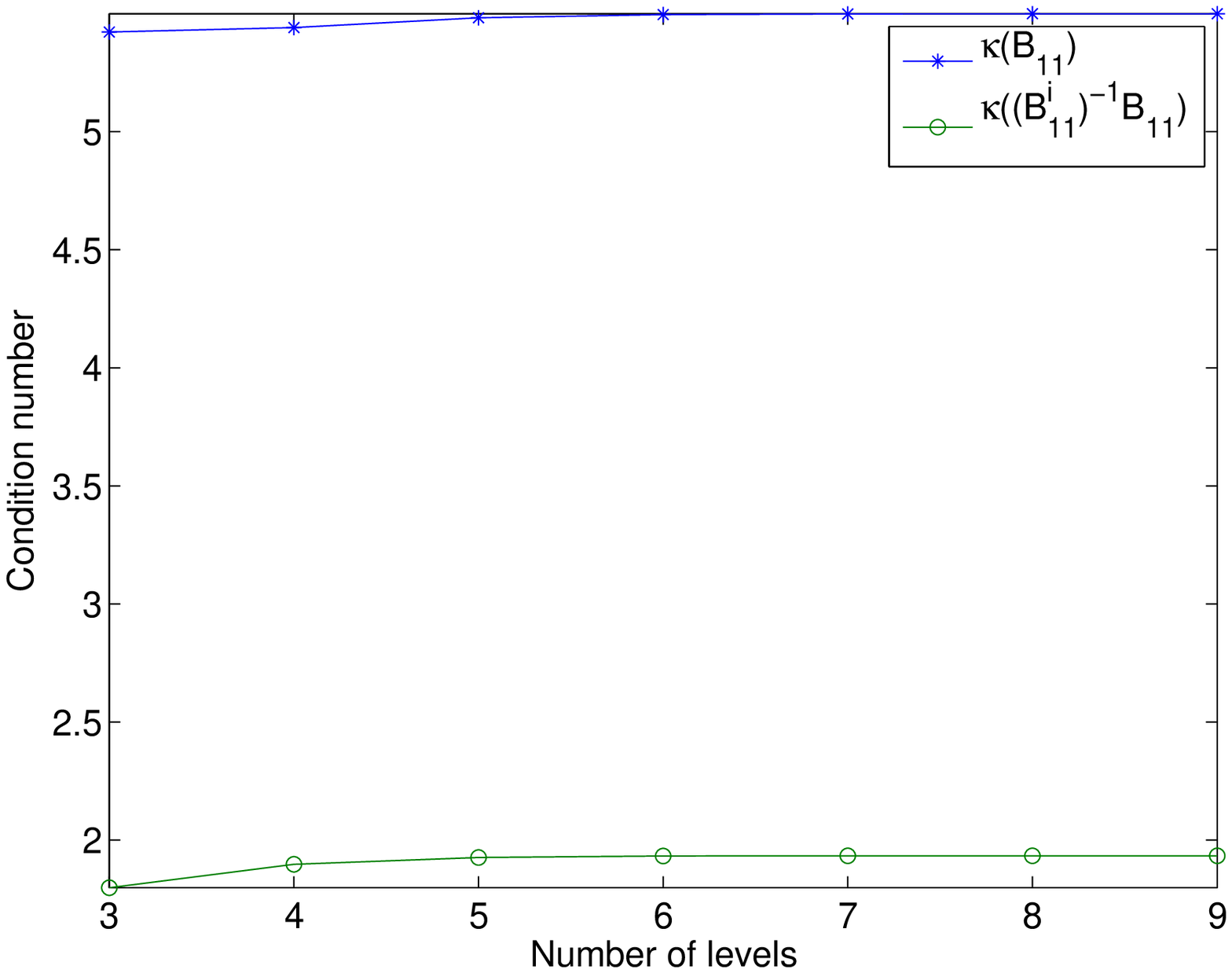}
\caption{Two-dimensional $\Hcurl$}
\end{subfigure}
\begin{subfigure}[!ht]{.495\textwidth}
\centering
\includegraphics[width=.99\textwidth,keepaspectratio]{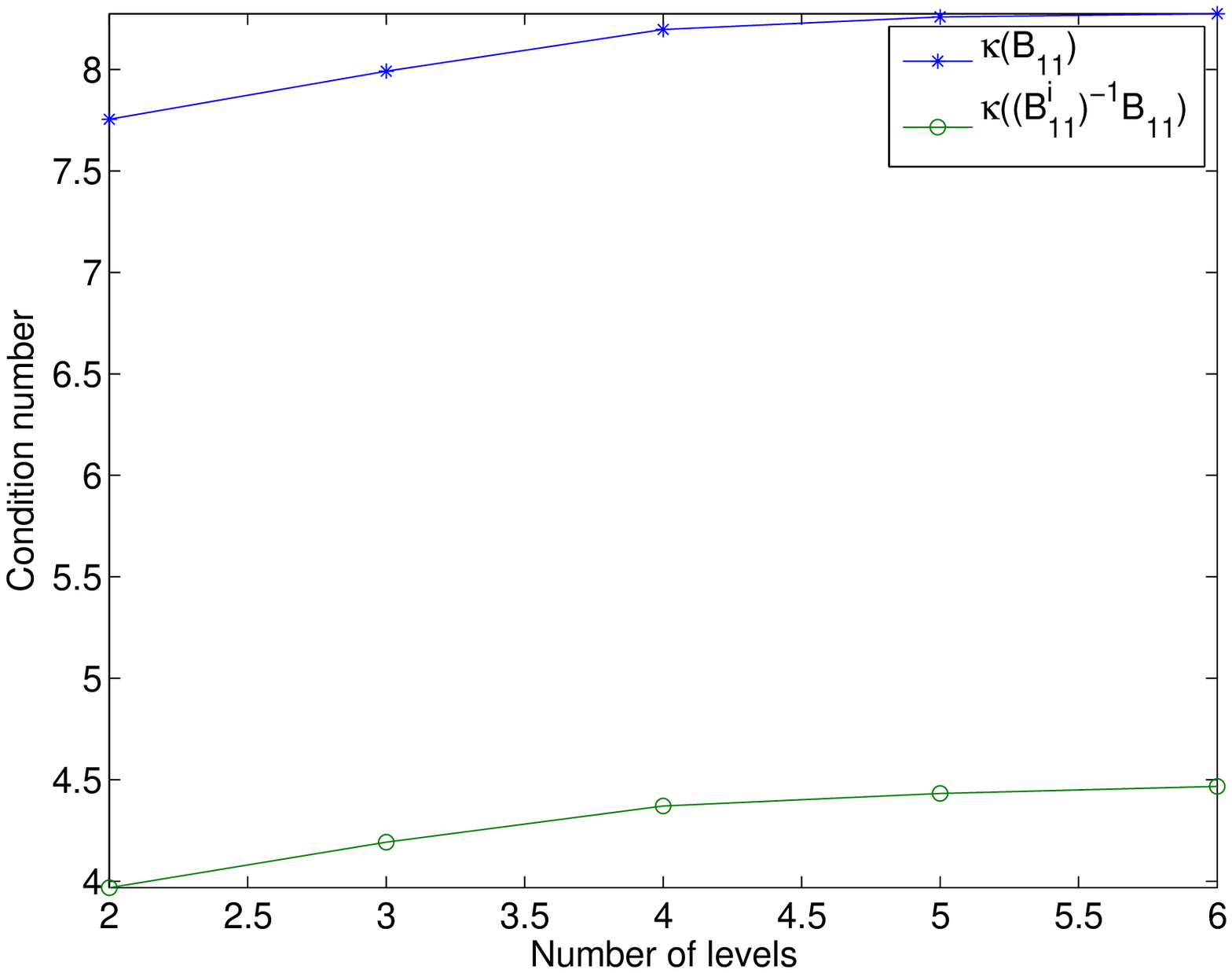}
\caption{Three-dimensional $\Hdiv$}
\end{subfigure}
\caption{Condition number of the matrix $B_{G,11}$}
\label{fig:cond_B11}
\end{figure}
We now first prove some auxiliary (stand-alone) results on algebraic sequences, which we will use to bound the CBS constant $\gamma$.
\begin{lemma}\label{lem:seq_abr}
For all $e>0$, consider the coupled sequences
\begin{subequations}
\label{eq:seq}
\begin{alignat}{2}
b_{0} & = e - 6, \qquad & a_{0} & = 2e + 6 = 2(b_{0} + 9), \label{eq:seq_0} \\
b_{\l+1} & = -b_{\l}^{2}/a_{\l}, \qquad & a_{\l+1} & = 2a_{\l} + b_{\l+1}, \quad \l = 0, 1, 2, \ldots . \label{eq:seq_m}
\end{alignat}
\end{subequations}
Let $r_{\l} = b_{\l}/a_{\l}$. Then we have
\begin{subequations}
\begin{alignat}{3}
& b_{\l+1}/a_{\l} = -r_{\l}^{2}, \quad & &
a_{\l+1}/a_{\l} = 2 - r_{\l}^{2}, \quad & &
r_{\l+1} = -r_{\l}^{2}/(2 - r_{\l}^{2}), \label{eq:rel_mp_m} \\
& a_{\l+1} - b_{\l+1} = 2a_{\l}, \quad & &
a_{\l+1} + b_{\l+1} = \dfrac{2}{a_{\l}}(a_{\l}^{2} - b_{\l}^{2}) = 2a_{\l}(1 - r_{\l}^{2}), \quad & &
\dfrac{a_{\l+1} + b_{\l+1}}{a_{\l+1} - b_{\l+1}} = 1 - r_{\l}^{2}. \label{eq:rel_amp_bmp}
\end{alignat}
\end{subequations}
Moreover, the following bound holds
\begin{subequations}
\label{eq:seq_ra}
\begin{align}
& -1 < r_{0} < 1/2, \mathrm{~and~} -1 < r_{\l} \le 0 \quad \forall ~\l = 1, 2, \ldots, \label{eq:bnd_rm} \\
& a_{\l} > \ldots a_{1} > a_{0} > 6, \quad
0 \le r_{\l}^{2} \le \ldots \le r_{1}^{2} \le r_{0}^{2} < 1, \forall \l = 0, 1, 2, \ldots. \label{eq:rel_rm_r0}
\end{align}
\end{subequations}
\end{lemma}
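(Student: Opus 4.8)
The plan is to establish the identities first, since they are purely algebraic consequences of the definitions, and then bootstrap from them to the inequalities by induction. First I would verify the three relations in \eqref{eq:rel_mp_m}. Dividing the recursion $b_{\l+1} = -b_{\l}^{2}/a_{\l}$ by $a_{\l}$ gives immediately $b_{\l+1}/a_{\l} = -(b_{\l}/a_{\l})^{2} = -r_{\l}^{2}$. Then from $a_{\l+1} = 2a_{\l} + b_{\l+1}$ I divide by $a_{\l}$ to get $a_{\l+1}/a_{\l} = 2 + b_{\l+1}/a_{\l} = 2 - r_{\l}^{2}$. Finally $r_{\l+1} = b_{\l+1}/a_{\l+1} = (b_{\l+1}/a_{\l})/(a_{\l+1}/a_{\l}) = -r_{\l}^{2}/(2 - r_{\l}^{2})$, which is the third identity.

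Next I would deal with \eqref{eq:rel_amp_bmp}. The identity $a_{\l+1} - b_{\l+1} = 2a_{\l}$ is just the recursion $a_{\l+1} = 2a_{\l} + b_{\l+1}$ rearranged. For the sum, I compute $a_{\l+1} + b_{\l+1} = 2a_{\l} + 2b_{\l+1} = 2a_{\l} - 2b_{\l}^{2}/a_{\l} = \tfrac{2}{a_{\l}}(a_{\l}^{2} - b_{\l}^{2})$, and factoring out $a_{\l}^{2}$ gives $2a_{\l}(1 - r_{\l}^{2})$. Dividing the sum by the difference $2a_{\l}$ then yields $(a_{\l+1} + b_{\l+1})/(a_{\l+1} - b_{\l+1}) = 1 - r_{\l}^{2}$. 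These are all one-line manipulations with no obstacle.

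The substantive part is the bounds in \eqref{eq:seq_ra}, which I would prove by induction on $\l$ using the recursion $r_{\l+1} = -r_{\l}^{2}/(2 - r_{\l}^{2})$ already established. For the base case, $e > 0$ gives $b_{0} = e - 6$ and $a_{0} = 2e + 6 > 6 > 0$, so $r_{0} = (e-6)/(2e+6)$; as $e$ ranges over $(0,\infty)$ this is a strictly increasing rational function with value $-1$ at $e = 0$ and limit $1/2$ as $e \to \infty$, giving $-1 < r_{0} < 1/2$ and hence $r_{0}^{2} < 1$. For the inductive step I would show two things simultaneously: that $r_{\l}^{2} < 1$ keeps the denominator $2 - r_{\l}^{2}$ strictly positive (indeed in $(1,2]$), so that $r_{\l+1} = -r_{\l}^{2}/(2 - r_{\l}^{2}) \le 0$ and $r_{\l+1} > -1$ (the latter because $r_{\l}^{2} < 2 - r_{\l}^{2}$ is equivalent to $r_{\l}^{2} < 1$); and that the sequence $r_{\l}^{2}$ is monotonically nonincreasing, which follows from $|r_{\l+1}| = r_{\l}^{2}/(2 - r_{\l}^{2}) \le r_{\l}^{2}$ since $2 - r_{\l}^{2} \ge 1$. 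This gives $0 \le r_{\l}^{2} \le \cdots \le r_{0}^{2} < 1$ and $-1 < r_{\l} \le 0$ for $\l \ge 1$. The strict monotonicity $a_{\l} > \cdots > a_{0} > 6$ then follows from $a_{\l+1}/a_{\l} = 2 - r_{\l}^{2} > 1$ together with $a_{0} > 6$.

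I expect the main obstacle to be the careful handling of the base case $r_{0}$, since the interval $(-1, 1/2)$ is asymmetric about zero and one must confirm that $r_{0}^{2} < 1$ holds across the whole parameter range (the value $1/2$ rather than $1$ at the upper end is what guarantees this with room to spare). After $\l = 1$ the sequence collapses into $(-1, 0]$ and the induction is self-sustaining because the map $x \mapsto -x^{2}/(2 - x^{2})$ is a contraction toward $0$ on that interval; the only care needed is to keep the chain of inequalities $2 - r_{\l}^{2} \in [1, 2)$ tight enough to deduce both $r_{\l+1} > -1$ and the monotone decrease of $r_{\l}^{2}$ at once.
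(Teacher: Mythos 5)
Your proposal is correct and follows essentially the same route as the paper: the identities in \eqref{eq:rel_mp_m} and \eqref{eq:rel_amp_bmp} by direct one-line algebra from the recursion, then the bounds \eqref{eq:seq_ra} by induction via $r_{\l+1} = -r_{\l}^{2}/(2 - r_{\l}^{2})$ and $a_{\l+1}/a_{\l} = 2 - r_{\l}^{2} > 1$. The only cosmetic differences are that you justify $-1 < r_{0} < 1/2$ by monotonicity of $e \mapsto (e-6)/(2e+6)$ where the paper states it as evident, and you obtain the decrease of $r_{\l}^{2}$ from $\lvert r_{\l+1}\rvert \le r_{\l}^{2}$ rather than the paper's ratio computation $r_{\l+1}^{2}/r_{\l}^{2} = r_{\l}^{2}/(2 - r_{\l}^{2})^{2} < 1$ — both equivalent.
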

\begin{proof}
Using the definition of $r_{\l}$ in \eqref{eq:seq_m}, we get $b_{\l+1}/a_{\l} = -r_{\l}^{2}$, and thus $a_{\l+1}/a_{\l} = 2 - r_{\l}^{2}$. The last relation of \eqref{eq:rel_mp_m} then immediately follows.
The relations \eqref{eq:rel_amp_bmp} are also easily obtained from \eqref{eq:seq_m} and \eqref{eq:rel_mp_m}.
Clearly, for $e > 0$, we have $a_{0} > 6$, and since $r_{0} = b_{0}/a_{0} = (e-6)/(2e+6)$, it is easy to see that $-1 < r_{0} < 1/2$. The latter also implies that $0 \le r_{0}^{2} < 1$.
We now prove the remaining bounds using induction.
\begin{enumerate}
\item[$\l=0$.] Since $a_{1}/a_{0} = 2 - r_{0}^{2} > 1$, we have $a_{1} > a_{0} > 6$. 
Moreover, $r_{1} = -r_{0}^{2}/(2-r_{0}^{2})$. This implies that $-1 < r_{1} \le 0$, and thus $0 \le r_{1}^{2} < 1$. Furthermore, when $r_{0} \ne 0$, we have
\[
r_{1}^{2} = \left ( \dfrac{-r_{0}^{2}}{2-r_{0}^{2}} \right )^{2} \Rightarrow \dfrac{r_{1}^{2}}{r_{0}^{2}} = \dfrac{r_{0}^{2}}{(2 - r_{0}^{2})^{2}} < 1.
\]
And, since $r_{1} = 0$ if $r_{0} = 0$, we have $r_{1}^{2} \le r_{0}^{2} < 1$.
\item[$\l = n$.] Assume that the relations \eqref{eq:seq_ra} hold for $\l = n$. Since $a_{n+1}/a_{n} = 2 - r_{n}^{2} > 1$, we have $a_{n+1} > a_{n} > 6$.
Moreover, $r_{n+1} = -r_{n}^{2}/(2-r_{n}^{2})$. This implies that $-1 < r_{n+1} \le 0$, and thus $0 \le r_{n+1}^{2} < 1$. Also, when $r_{n} \ne 0$, we have
\[
r_{n+1}^{2} = \left ( \dfrac{-r_{n}^{2}}{2-r_{n}^{2}} \right )^{2} \Rightarrow \dfrac{r_{n+1}^{2}}{r_{n}^{2}} = \dfrac{r_{n}^{2}}{(2 - r_{n}^{2})^{2}} < 1.
\]
And, since $r_{n+1} = 0$ if $r_{n} = 0$, we have $r_{n+1}^{2} \le r_{n}^{2} < 1$.
\end{enumerate}
This concludes the proof. \hfill
\end{proof}
\begin{lemma}\label{lem:seq_c2}
Let $e > 0$ and the sequences $a_{\l}$ and $b_{\l}$ be as defined in Lemma~\ref{lem:seq_abr}. Then for
\begin{equation}
c_{\l, C}^{2} = \dfrac{36 (a_{\l} + b_{\l})}{(a_{\l}^{2} - 36)(a_{\l} - b_{\l})},
\label{eq:def_c2}
\end{equation}
the following bounds hold for all $\l = 0, 1, 2, \ldots$
\begin{equation}
c_{\l, C}^{2} < c_{\l-1, C}^{2} < \ldots < c_{1, C}^{2} < c_{0, C}^{2} <3/8.
\label{eq:bnd_seq_c2}
\end{equation}
\end{lemma}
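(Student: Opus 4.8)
\textbf{Step 1 (reduction and the bound $c_{0,C}^2<3/8$).} The plan is first to separate the two mechanisms that control the size of $c_{\l,C}^2$. Writing $a_\l\pm b_\l=a_\l(1\pm r_\l)$ and using that $a_\l>6$ and $-1<r_\l<1$ from \eqref{eq:bnd_rm}--\eqref{eq:rel_rm_r0} (so every factor below is positive), I would rewrite \eqref{eq:def_c2} as
\[
c_{\l,C}^2=\frac{36(a_\l+b_\l)}{(a_\l^2-36)(a_\l-b_\l)}=\frac{36}{a_\l^2-36}\cdot\frac{1+r_\l}{1-r_\l}.
\]
For $\l=0$ the quantities $a_0+b_0=3e$, $a_0-b_0=e+12$ and $a_0^2-36=4e(e+6)$ are explicit, giving $c_{0,C}^2=\dfrac{27}{(e+6)(e+12)}$; since $(e+6)(e+12)>72$ for every $e>0$, this yields $c_{0,C}^2<27/72=3/8$, which is the top inequality of \eqref{eq:bnd_seq_c2}.

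\textbf{Step 2 (reduction of monotonicity to one inequality).} Since $\dfrac{1+r_{\l+1}}{1-r_{\l+1}}=\dfrac{a_{\l+1}+b_{\l+1}}{a_{\l+1}-b_{\l+1}}=1-r_\l^2$ by \eqref{eq:rel_amp_bmp}, the form of Step 1 gives the multiplicative recursion
\[
c_{\l+1,C}^2=c_{\l,C}^2\,(1-r_\l)^2\,\frac{a_\l^2-36}{a_{\l+1}^2-36}.
\]
Hence $c_{\l+1,C}^2<c_{\l,C}^2$ is equivalent to $(a_\l^2-36)(1-r_\l)^2<a_{\l+1}^2-36$. Substituting $a_{\l+1}^2=a_\l^2(2-r_\l^2)^2$ (from $a_{\l+1}/a_\l=2-r_\l^2$) and setting $P(r):=(2-r^2)^2-(1-r)^2=(r^2-r-1)(r^2+r-3)$, this is in turn equivalent to
\[
Q_\l:=a_\l^2P(r_\l)+36\,r_\l(r_\l-2)>0 .
\]
Thus the whole monotonicity claim reduces to proving $Q_\l>0$ for all $\l$.

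\textbf{Step 3 (the easy range $r_\l\ge(1-\sqrt5)/2$).} Here I would argue directly rather than through $Q_\l$. From $a_{\l+1}^2=a_\l^2(2-r_\l^2)^2$ one obtains $\dfrac{a_{\l+1}^2-36}{a_\l^2-36}=(2-r_\l^2)^2+\dfrac{36\bigl[(2-r_\l^2)^2-1\bigr]}{a_\l^2-36}>(2-r_\l^2)^2$, the strictness coming from $r_\l^2<1$. When $r_\l\ge(1-\sqrt5)/2$ we have $P(r_\l)\ge0$, i.e. $(2-r_\l^2)^2\ge(1-r_\l)^2$, whence $\dfrac{a_{\l+1}^2-36}{a_\l^2-36}>(1-r_\l)^2$ and the recursion of Step 2 yields $c_{\l+1,C}^2<c_{\l,C}^2$. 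As $r_\l\le0$ for all $\l\ge1$, this already disposes of every step with $r_\l\in[(1-\sqrt5)/2,0]$.

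\textbf{Step 4 (the range $r_\l<(1-\sqrt5)/2$ --- the main obstacle).} When $r_\l<(1-\sqrt5)/2$ we have $P(r_\l)<0$, the crude bound of Step 3 fails, and $Q_\l>0$ now forces an \emph{upper} bound on $a_\l^2$; the difficulty is that $a_\l$ and $r_\l$ are strongly correlated, so $Q_\l>0$ is simply false for $(a_\l,r_\l)$ regarded as free variables, and the recursion must be used. For $\l=0$ the correlation is explicit: solving $r_0=(e-6)/(2e+6)$ gives $a_0=18/(1-2r_0)$, and substituting into $Q_0$ and clearing denominators yields $Q_0=\dfrac{36\,(r_0+1)^2\,(13r_0^2-38r_0+27)}{(1-2r_0)^2}>0$ on $(-1,1/2)$, because both roots of $13r^2-38r+27$ exceed $1$. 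For $\l\ge1$ I would exploit the invariant $a_{\l+1}^2-b_{\l+1}^2=4(a_\l^2-b_\l^2)$ (immediate from \eqref{eq:rel_amp_bmp}), hence $D_\l:=a_\l^2-b_\l^2=4^{\l}\,3e(e+12)$, so that $a_\l^2=\dfrac{D_\l}{1-r_\l^2}$ and $Q_\l>0$ becomes the sharp comparison $D_\l<D^{*}(r_\l)$ with $D^{*}(r):=\dfrac{36(1-r^2)r(r-2)}{-P(r)}$. Because $|r_\l|$ is nonincreasing by \eqref{eq:rel_rm_r0}, the levels with $r_\l<(1-\sqrt5)/2$ form an initial segment $0,1,\dots,\l^{*}$, and I would close the argument by induction along this segment: the $\l=0$ computation is the base case $D_0<D^{*}(r_0)$, and the inductive step follows from the single-variable inequality $D^{*}(r_\l)\ge 4\,D^{*}(r_{\l-1})$ under the map $r_{\l-1}\mapsto r_\l=-r_{\l-1}^2/(2-r_{\l-1}^2)$, since then $D_\l=4D_{\l-1}<4D^{*}(r_{\l-1})\le D^{*}(r_\l)$. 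Establishing this last inequality --- equivalently, that the trajectory $D_\l=4^\l D_0$ stays below the critical curve $D^{*}$ all the way to the exit level $\l^{*}$ --- is the technical heart of the proof, and it is genuinely delicate because the inequality degenerates to equality in the limit $e\to0$ (where $r_\l\to-1$ and $a_\l\to6$ simultaneously), so no soft estimate suffices and the sharp form of the $a_\l$--$r_\l$ correlation must be used.
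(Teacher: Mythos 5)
Your Steps 1--3 are correct, and so is the base case of Step 4: I checked that $a_0=18/(1-2r_0)$, that $9P(r_0)+r_0(r_0-2)(1-2r_0)^2=(r_0+1)^2(13r_0^2-38r_0+27)$ with both roots $(19\pm\sqrt{10})/13$ of the quadratic factor exceeding $1$, and that the invariant $a_{\ell}^2-b_{\ell}^2=4^{\ell}\,3e(e+12)$ follows from \eqref{eq:rel_amp_bmp}. But the proposal is not a proof: the inductive step of Step 4 rests entirely on the inequality $D^{*}(r_{\ell})\ge 4\,D^{*}(r_{\ell-1})$ along the map $r_{\ell-1}\mapsto -r_{\ell-1}^{2}/(2-r_{\ell-1}^{2})$, and you explicitly leave it unproved, calling it ``the technical heart.'' That heart is exactly where the lemma's difficulty lives: the hard segment $r_{\ell}<(1-\sqrt{5})/2$ is nonempty precisely for small $e$ (equivalently $r_0$ near $-1$), and numerically the ratio $D^{*}(r_{\ell})/D^{*}(r_{\ell-1})$ decreases to $4$ as $r_{\ell-1}\to -1$ (e.g.\ at $r_{\ell-1}=-0.999$ it is about $4.001$), so the inequality is razor-thin and nothing soft in your write-up can deliver it. As submitted, monotonicity $c_{\ell+1,C}^{2}<c_{\ell,C}^{2}$ is established only for $\ell=0$ and for levels with $r_{\ell}\ge(1-\sqrt{5})/2$; the initial hard segment for small $e$ remains open. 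In principle the missing step is a one-variable rational inequality that could be settled by clearing denominators and certifying the sign of a polynomial on an interval, but you have not attempted this.

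It is instructive that the paper's proof avoids this tight two-variable comparison altogether by choosing a different, self-propagating invariant. It first proves the uniform bound $c_{\ell,C}^{2}<3/8$ for all $\ell$ by inducting on the inequality $a_{\ell+1}^{2}-36>96(1-r_{\ell}^{2})$ of \eqref{eq:rel_amp_rm}: multiplying the hypothesis at level $n$ by $(2-r_{n}^{2})^{2}$ reduces the inductive step to the positivity of $g_{n}$ in \eqref{eq:func_g}, which, after substituting the recursion for $r_{n}$ in terms of $r_{n-1}$, factors as in \eqref{eq:rel_g_rnm}; there the cubic $-t^{3}+15t^{2}-64t+66$ (in $t=r_{n-1}^{2}$) is positive on the whole of $[0,1]$ --- it equals $16$ even at the degenerate endpoint $t=1$ --- the degeneracy as $e\to 0$ being absorbed harmlessly into the prefactor $(1-r_{n-1}^{2})^{2}$. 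Monotonicity is then deduced \emph{a posteriori} from this bound via $f_{\ell}-1<-g_{\ell}/\bigl((1-r_{\ell-1}^{2})(2-r_{\ell}^{2})^{2}\bigr)<0$. So your diagnosis that ``no soft estimate suffices'' is an artifact of your choice of invariant: comparing $D_{\ell}=4^{\ell}D_{0}$ against the critical curve $D^{*}$ forces a sharp estimate, whereas the paper's weaker but inductively stable invariant $c_{\ell,C}^{2}<3/8$ passes through the limit $e\to 0$ with room to spare. If you wish to salvage your route, you must actually prove $D^{*}(r')\ge 4D^{*}(r)$ for $r\in(-1,(1-\sqrt{5})/2)$ with $r'=-r^{2}/(2-r^{2})$ in the hard range; otherwise the argument should be restructured along the paper's lines.
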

\begin{proof}
From $a_{0} = 2e+6$ and $b_{0} = e-6$, we have $a_{0} - b_{0} = e+12$, $a_{0} + b_{0} = 3e$, $a_{0} - 6 = 2e$, and $a_{0} + 6 = 2(e+6)$. Substituting these relations in the definition of $c_{0, C}^{2}$, we get
\begin{equation}
c_{0, C}^{2} = \dfrac{27}{(e+6)(e+12)} < 3/8.
\label{eq:bnd_c2_0}
\end{equation}
Now
\begin{equation*}
c_{1, C}^{2} - c_{0, C}^{2} =
\dfrac{36\left ( (a_{1} + b_{1})(a_{0}^{2} - 36)(a_{0} - b_{0}) - (a_{0} + b_{0})(a_{1}^{2} - 36)(a_{1} - b_{1})\right )}
{(a_{1}^{2} - 36)(a_{1} - b_{1})(a_{0}^{2} - 36)(a_{0} - b_{0})}.
\end{equation*}
Substituting the values of $a_{0},a_{1},b_{0}$ and $b_{1}$, and after some lengthy, but simple calculations, we find that
\begin{equation*}
c_{1, C}^{2} - c_{0, C}^{2} =
\dfrac{108e \left ( -9e^{2} (312 + 80e + 5e^{2}) \right )}{(e+3)(a_{1}^{2} - 36)(a_{1} - b_{1})(a_{0}^{2} - 36)(a_{0} - b_{0})}.
\end{equation*}
Since the denominator is a positive quantity, we get $c_{1, C}^{2} - c_{0, C}^{2} < 0$, and thus
\begin{equation}
c_{1, C}^{2} < 3/8.
\label{eq:bnd_c2_1}
\end{equation}
For remaining bounds, we again use induction. Note that, using \eqref{eq:rel_amp_bmp} we get
\begin{equation}
c_{\l+1, C}^{2} = \dfrac{36 (a_{\l+1} + b_{\l+1})}{(a_{\l+1}^{2} - 36)(a_{\l+1} - b_{\l+1})} = \dfrac{36(1 - r_{\l}^{2})}{(a_{\l+1}^{2} - 36)}.
\label{eq:rel_gam2_rm_amp}
\end{equation}
Therefore, to show that $c_{\l+1, C}^{2} < 3/8$, it suffices to show that
\begin{equation}
a_{\l+1}^{2} - 36 > 96(1 - r_{\l}^{2}).
\label{eq:rel_amp_rm}
\end{equation}
Since $c_{1, C}^{2} < 3/8$, we clearly have $a_{1}^{2} - 36 > 96(1 - r_{0}^{2})$. Now assume that the relation \eqref{eq:rel_amp_rm} holds for $\l = n-1$, i.e.,
\begin{equation}
a_{n}^{2} - 36 > 96(1 - r_{n-1}^{2}).
\label{eq:rel_an_anm}
\end{equation}
Multiplying \eqref{eq:rel_an_anm} by $(2 - r_{n}^{2})^{2}$ and subtracting $36$ from both sides we get
\begin{align*}
(2 - r_{n}^{2})^{2} a_{n}^{2} - 36 & > 36(2 - r_{n}^{2})^{2} + 96(1 - r_{n-1}^{2})(2 - r_{n}^{2})^{2} - 36 \\
\Rightarrow \qquad a_{n+1}^{2} - 36 & > 96 \left ( (2 - r_{n}^{2})^{2} (11/8 - r_{n-1}^{2}) -3/8 \right ) .
\end{align*}
We need to show that $(2 - r_{n}^{2})^{2} (11/8 - r_{n-1}^{2}) -3/8 > 1 - r_{n}^{2}$, i.e.,
\begin{align}
g_{n} := (2 - r_{n}^{2})^{2} (11/8 - r_{n-1}^{2}) + r_{n}^{2} -11/8 > 0.
\label{eq:func_g}
\end{align}
From the recurrence relation on $r_{n}$ from \eqref{eq:rel_mp_m}, we have
\begin{equation*}
r_{n}^{2} = \dfrac{r_{n-1}^{4}}{(2-r_{n-1}^{2})^{2}}, \quad 2 - r_{n}^{2} = \dfrac{(r_{n-1}^{4} - 8r_{n-1}^{2} + 8)}{(2-r_{n-1}^{2})^{2}}.
\end{equation*}
Substituting these relations in $g_{n}$, and after some lengthy calculations we obtain
\begin{equation}
g_{n} = \dfrac{(1 - r_{n-1}^{2})^{2}}{(2 - r_{n-1}^{2})^{4}} ( -r_{n-1}^{6} + 15 r_{n-1}^{4} - 64 r_{n-1}^{2} + 66).
\label{eq:rel_g_rnm}
\end{equation}
Now for $r_{n-1}^{2} \in [0,1)$, we have
\[ 1 - r_{n-1}^{2} > 0, \quad 2 - r_{n-1}^{2} > 0, \quad 66 - 64 r_{n-1}^{2} > 0, \quad 15 r_{n-1}^{4} - r_{n-1}^{6} \ge 0, \]
which proves that $g_{n} > 0$, and that $a_{n+1}^{2} - 36 > 96 (1 - r_{n}^{2})$. Therefore, the inequality \eqref{eq:rel_amp_rm} holds for all $\l = 0, 1, \ldots $.
To prove the monotonicity of $c_{\l, C}^{2}$, we show that
\begin{equation}
f_{\l} := c_{\l+1, C}^{2}/c_{\l, C}^{2} < 1.
\label{eq:func_f}
\end{equation}
Using \eqref{eq:rel_gam2_rm_amp} we get
\begin{equation*}
f_{\l} = \dfrac{(1 - r_{\l}^{2}) (a_{\l}^{2} - 36)}{(1 - r_{\l-1}^{2})(a_{\l+1}^{2} - 36)}.
\end{equation*}
Multiplying numerator and denominator by $(2 - r_{\l}^{2})^{2}$, we obtain
\begin{align*}
f_{\l} & = \dfrac{(1 - r_{\l}^{2}) \left ( (2 - r_{\l}^{2})^{2} a_{\l}^{2} - 36 (2 - r_{\l}^{2})^{2}\right ) }
{(1 - r_{\l-1}^{2})(a_{\l+1}^{2} - 36) (2 - r_{\l}^{2})^{2}} \\
& = \dfrac{(1 - r_{\l}^{2})}{(1 - r_{\l-1}^{2})} \dfrac{\left ( a_{\l+1}^{2} - 36 + 36 ( 1 - (2 - r_{\l}^{2})^{2})\right )}
{(a_{\l+1}^{2} - 36) (2 - r_{\l}^{2})^{2}} \\
& = \dfrac{(1 - r_{\l}^{2})}{(1 - r_{\l-1}^{2})(2 - r_{\l}^{2})^{2}} + \dfrac{36(1 - r_{\l}^{2})( 1 - (2 - r_{\l}^{2})^{2})}
{(1 - r_{\l-1}^{2}) (a_{\l+1}^{2} - 36) (2 - r_{\l}^{2})^{2}}.
\end{align*}
Now since $c_{\l+1, C}^{2} < 3/8$, we have $(1 - r_{\l}^{2}) / (a_{\l+1}^{2} - 36) < 1/96$ from \eqref{eq:rel_amp_rm}. Therefore,
\begin{align*}
f_{\l} & < \dfrac{(1 - r_{\l}^{2})}{(1 - r_{\l-1}^{2})(2 - r_{\l}^{2})^{2}} +
\dfrac{36( 1 - (2 - r_{\l}^{2})^{2})} {96 (1 - r_{\l-1}^{2}) (2 - r_{\l}^{2})^{2}} \\
& = \dfrac{(1 - r_{\l}^{2}) + \dfrac{3}{8} ( 1 - (2 - r_{\l}^{2})^{2})} {(1 - r_{\l-1}^{2})(2 - r_{\l}^{2})^{2}}
= \dfrac{11/8 - r_{\l}^{2} - \dfrac{3}{8} (2 - r_{\l}^{2})^{2}} {(1 - r_{\l-1}^{2})(2 - r_{\l}^{2})^{2}}.
\end{align*}
This gives
\begin{align*}
f_{\l} - 1 & < \dfrac{11/8 - r_{\l}^{2} - \dfrac{3}{8} (2 - r_{\l}^{2})^{2} - (1 - r_{\l-1}^{2})(2 - r_{\l}^{2})^{2}}
{(1 - r_{\l-1}^{2})(2 - r_{\l}^{2})^{2}} \\
& = \dfrac{11/8 - r_{\l}^{2} + (2 - r_{\l}^{2})^{2} (- 11/8 + r_{\l-1}^{2})} {(1 - r_{\l-1}^{2})(2 - r_{\l}^{2})^{2}}.
\end{align*}
Using \eqref{eq:func_g} we therefore get
\begin{align*}
f_{\l} - 1 & < \dfrac{-g_{\l}} {(1 - r_{\l-1}^{2})(2 - r_{\l}^{2})^{2}} < 0,
\end{align*}
since $g_{\l} > 0$, $1 - r_{\l-1}^{2} > 0$, and $(2 - r_{\l}^{2})^{2} > 0$. This proves \eqref{eq:func_f} and concludes the proof.
\end{proof}
\begin{lemma}\label{lem:seq_c2_3D}
Let $e > 0$ and the sequences $a_{\l}$ and $b_{\l}$ be as defined in Lemma~\ref{lem:seq_abr}. Then for
\begin{equation}
c_{\l, D}^{2} = \dfrac{72 (a_{\l} + b_{\l})}{(a_{\l} + 12)(a_{\l} - 6)(a_{\l} - b_{\l})},
\label{eq:def_c2_3D}
\end{equation}
the following bounds hold for all $\l = 0, 1, 2, \ldots$
\begin{equation}
c_{\l, D}^{2} < c_{\l-1, D}^{2} < \ldots < c_{1, D}^{2} < c_{0, D}^{2} <1/2.
\label{eq:bnd_seq_c2_3D}
\end{equation}
\end{lemma}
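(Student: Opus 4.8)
The plan is to mirror the proof of Lemma~\ref{lem:seq_c2} step by step, adapting each computation to the modified denominator $(a_\l+12)(a_\l-6)$ in \eqref{eq:def_c2_3D} in place of $a_\l^2-36=(a_\l+6)(a_\l-6)$. First I would settle the base level directly: with $a_0=2e+6$ and $b_0=e-6$ one has $a_0+b_0=3e$, $a_0-b_0=e+12$, $a_0-6=2e$ and $a_0+12=2(e+9)$, so that \eqref{eq:def_c2_3D} collapses to
\[
c_{0,D}^{2}=\frac{54}{(e+9)(e+12)}<\frac12\qquad\text{for all }e>0,
\]
since $(e+9)(e+12)>108$. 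Next, exactly as \eqref{eq:rel_gam2_rm_amp} was derived, I would insert the relations $a_{\l+1}-b_{\l+1}=2a_\l$ and $a_{\l+1}+b_{\l+1}=2a_\l(1-r_\l^{2})$ from \eqref{eq:rel_amp_bmp} into \eqref{eq:def_c2_3D} to obtain the working identity
\[
c_{\l+1,D}^{2}=\frac{72\,(1-r_\l^{2})}{(a_{\l+1}+12)(a_{\l+1}-6)},
\]
which recasts the target $c_{\l+1,D}^{2}<1/2$ as the equivalent inequality $(a_{\l+1}+12)(a_{\l+1}-6)>144\,(1-r_\l^{2})$, the analogue of \eqref{eq:rel_amp_rm}; I will refer to this as $(\ast)$.

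I would prove $(\ast)$ by induction on $\l$. The base case $\l=0$ is handled by the explicit route used for \eqref{eq:bnd_c2_1}: forming $c_{1,D}^{2}-c_{0,D}^{2}$, substituting $a_0,a_1,b_0,b_1$, and checking that the numerator is negative, which yields $c_{1,D}^{2}<c_{0,D}^{2}<1/2$, i.e. $(\ast)$ at $\l=0$. The inductive step is where the two dimensions genuinely differ, and it is where I expect the main difficulty. In the two-dimensional case the factor $a_{\l+1}^{2}-36$ carries no linear term, so multiplying the inductive hypothesis by $(2-r_\l^{2})^{2}$ (recall $a_{\l+1}=(2-r_\l^{2})a_\l$ from \eqref{eq:rel_mp_m}) let $a_\l$ cancel completely and reduced everything to the pure polynomial inequality $g_n>0$ in $r_{n-1}$, see \eqref{eq:func_g}--\eqref{eq:rel_g_rnm}. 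Here the factor is $(a_{\l+1}+12)(a_{\l+1}-6)=a_{\l+1}^{2}+6a_{\l+1}-72$, and the stray linear term $6a_{\l+1}$ does \emph{not} cancel. Indeed, a short asymptotic check as $e\to0$ shows that both sides of $(\ast)$ are of order $4^{\l}e$ with matching leading coefficients, so $(\ast)$ is asymptotically tight and $a_\l$ cannot be eliminated from the estimate by an $r_\l$-only argument.

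To overcome this I would keep $a_\l$ explicit. Writing $t:=r_{\l-1}^{2}\in[0,1)$ and using \eqref{eq:rel_mp_m}, the quantities $r_\l^{2}$, $s_\l:=2-r_\l^{2}=a_{\l+1}/a_\l$ and $1-r_\l^{2}$ become rational functions of $t$ alone. I would then use the inductive hypothesis $(a_\l+12)(a_\l-6)>144(1-t)$ only to extract the lower bound $a_\l>a_\l^{\ast}(t):=-3+\sqrt{225-144t}$. Since the left-hand side of $(\ast)$, written as $s_\l^{2}a_\l^{2}+6s_\l a_\l-72$, is strictly increasing in $a_\l$ for $a_\l>0$, it suffices to verify $(\ast)$ at the worst case $a_\l=a_\l^{\ast}(t)$; this reduces the inductive step to a \emph{single}-variable inequality in $t\in[0,1)$. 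Clearing the square root turns it into a polynomial inequality in $t$, in the spirit of the factorisation \eqref{eq:rel_g_rnm}, and discharging it is the main technical obstacle, the tightness noted above warning that no slack may be wasted.

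Finally, with the uniform bound $(\ast)$ available, the monotonicity in \eqref{eq:bnd_seq_c2_3D} follows as in the two-dimensional case. I would form $f_\l:=c_{\l+1,D}^{2}/c_{\l,D}^{2}$ from the working identity, clear the factor $(2-r_\l^{2})^{2}$ using $a_{\l+1}=(2-r_\l^{2})a_\l$, invoke $(\ast)$ in the form $(1-r_\l^{2})/[(a_{\l+1}+12)(a_{\l+1}-6)]<1/144$, and reduce $f_\l-1<0$ to the positivity of the same $t$-polynomial met in the inductive step, mirroring \eqref{eq:func_f}. Combining $c_{0,D}^{2}<1/2$ with the strict monotonicity then gives the full chain $c_{\l,D}^{2}<\cdots<c_{0,D}^{2}<1/2$.
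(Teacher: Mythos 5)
Your setup reproduces the paper's proof up to the point where the real work begins: the base computation $c_{0,D}^{2}=54/\bigl((e+9)(e+12)\bigr)<1/2$, the explicit sign check for $c_{1,D}^{2}-c_{0,D}^{2}$, and the working identity $c_{\l+1,D}^{2}=72(1-r_{\l}^{2})/\bigl((a_{\l+1}+12)(a_{\l+1}-6)\bigr)$ with the recast target $(\ast)$ are all correct, and your diagnosis that the cancellation trick of Lemma~\ref{lem:seq_c2} fails here because of the linear term $6a_{\l+1}$ is accurate. But your proposal then stops exactly at the decisive step: you reduce the induction to a one-variable inequality at the worst case $a_{\l}=-3+\sqrt{225-144t}$ and then declare discharging it to be ``the main technical obstacle'' without doing so. That inequality \emph{is} the inductive step, and --- as your own tightness observation shows, since it degenerates to equality as $t\to 1$ --- it is delicate; leaving it unproven is a genuine gap, not a deferred routine verification. (The reduction itself is valid, since the left side $s_{\l}^{2}a_{\l}^{2}+6s_{\l}a_{\l}-72$ is increasing in $a_{\l}$, and the worst-case inequality does appear to be true: an expansion at $t=1-\epsilon$ gives a margin of $1824\,\epsilon^{2}+O(\epsilon^{3})$. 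But a proof must actually carry out the polynomial verification after clearing the square root.) The paper avoids this difficulty by an entirely different and complete device: it sets $t_{m}:=1/2-c_{m,D}^{2}=n_{m}/d_{m}$, a rational function of $(a,b)=(a_{m},b_{m})$, observes $d_{m},d_{m+1}>0$, and closes the induction with the dominance comparison $n_{m+1}>\tfrac{a}{2}\,n_{m}$, via the factorization $2\bigl(n_{m+1}-\tfrac{a}{2}n_{m}\bigr)=(a+b)\bigl(-6b(a^{2}+2a-48)+3a(a^{2}+6a-72)+2(a^{3}+b^{3})+2a(a^{2}-b^{2})\bigr)$, each group being positive because $a>6$, $a>|b|$, $b<0$ for $m\ge 1$. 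This two-variable comparison, which never needs a worst-case substitution or square-root clearing, is the idea missing from your plan.

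Your final paragraph also overstates what transfers from the two-dimensional case. In Lemma~\ref{lem:seq_c2}, $f_{\l}-1<0$ reduces to positivity of the \emph{same} polynomial $g$ precisely because multiplying by $(2-r_{\l}^{2})^{2}$ eliminates $a_{\l}$ completely; here $\bigl(a_{\l+1}+12s_{\l}\bigr)\bigl(a_{\l+1}-6s_{\l}\bigr)\ne\bigl(a_{\l+1}+12\bigr)\bigl(a_{\l+1}-6\bigr)$ --- the same stray linear term you flagged for $(\ast)$ reappears --- so your claim that $f_{\l}-1<0$ reduces to ``the positivity of the same $t$-polynomial'' is unsupported and would require yet another worst-case argument you do not execute. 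To be fair, the paper is also thin at this point: it proves the uniform bound in full but only sketches the monotonicity induction ($c_{m+1,D}^{2}-c_{m,D}^{2}<0$ whenever $c_{m,D}^{2}-c_{m-1,D}^{2}<0$), explicitly deferring the details to cylindrical algebraic decomposition in Mathematica. So your architecture is reasonable and would, if completed, supply detail the paper omits; but as submitted, both of your inductive inequalities --- the uniform bound and the monotonicity --- are asserted rather than proved.
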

\begin{proof}
Substituting the relations for $a_{0}, b_{0}, a_{0} - b_{0}, a_{0} + b_{0}, a_{0} - 6 $, and $a_{0} + 6 $ from Lemma~\ref{lem:seq_c2} in the definition of $c_{0, D}^{2}$, we get
\begin{equation}
c_{0, D}^{2} = \dfrac{54}{(e+9)(e+12)} < 1/2.
\label{eq:bnd_c2_0_3D}
\end{equation}
Now substituting the values of $a_{0},a_{1},b_{0}$ and $b_{1}$, and after some lengthy, but simple calculations, we find that
\begin{equation}
c_{1, D}^{2} - c_{0, D}^{2} =
\dfrac{-486e (5e^{2} + 88e + 372)}{(e + 9)(e + 12)(7e + 48)(7e^{2} + 84e + 108)} < 0.
\label{eq:bnd_c2_1_3D}
\end{equation}
For remaining bounds, we use induction and proceed as follows. Let $t_{m} := 1/2 - c_{m, D}^{2}$ and $t_{m+1} := 1/2 - c_{m+1, D}^{2}$. Then, expanding $a_{m+1}$ and $b_{m+1}$ in terms of $a_{m}$ and $b_{m}$, and dropping the subscripts of $a_{m}$ and $b_{m}$ for brevity reasons, we get
\begin{subequations}
\begin{align}
t_{m} := 1/2 - c_{m, D}^{2} & =
\dfrac{-216 a + 6 a^{2} + a^{3} - 72 b - 6 a b - a^{2} b}{2 (a - 6) (a + 12) (a-b)} =: \dfrac{n_{m}}{d_{m}},
\label{eq:rel_tn} \\
t_{m+1} := 1/2 - c_{m+1, D}^{2} & =
\dfrac{-216 a^{2} + 12 a^{3} + 4 a^{4} + 144 {b}^2 - 6 a b^{2} - 4 a^{2} b^{2} + b^{4}}
{2 (-6 a + 2 a^{2} - b^{2}) (12 a + 2 {a}^2-b^{2})} =: \dfrac{n_{m+1}}{d_{m+1}},
\label{eq:rel_tnp}
\end{align}
\end{subequations}
where $n_{m}$ and $n_{m+1}$ are the numerators of $t_{m}$ and $t_{m+1}$, respectively, and $d_{m}$ and $d_{m+1}$ are the denominators of $t_{m}$ and $t_{m+1}$, respectively. Assume that the relation \eqref{eq:bnd_seq_c2_3D} holds for $\l = m \ge 1$, i.e., $t_{m} > 0$. We need to show that $t_{m+1} > 0$. Since $a > 6$, $a > |b|$, and $b < 0$ for $m \ge 1$, we see that $d_{m}$ and $d_{m+1}$ are positive. Therefore, it suffices to show that $n_{m+1}$ is positive whenever $n_{m}$ is positive. Given $a/2 > 1$, we consider $n_{m+1} - \dfrac{a}{2} n_{m}$. We have
\begin{align*}
2(n_{m+1} - \dfrac{a}{2} n_{m}) & = (a + b)(7 a^{3} + 18 a^{2} - 6 a ^{2} b + 288 b + 2 b^{3} - 216 a - 12 ab - 2 a b^{2})\\
& = (a + b)(-6b (a^{2} + 2 a - 48) + 3a (a^{2} + 6 a - 72) + 2 (a^{3} + b^{3}) + 2 a (a^{2} - b^{2}))\\
& > 0,
\end{align*}
since $a > 6$, $a > |b|$, and $b < 0$ for $m \ge 1$. This proves that $n_{m+1} > 0$, and hence, $t_{m+1} > 0$.
The monotonicity of $c_{\l, D}^{2}$ can be shown by using \eqref{eq:bnd_c2_1_3D} and showing the induction that $c_{m+1, D}^{2} - c_{m, D}^{2} < 0$ whenever $c_{m, D}^{2} - c_{m-1, D}^{2} < 0$. The details are omitted here (the results can also be verified by using algebraic cylindrical decomposition in a computer algebra system like Mathematica \cite{mathematica}).
\end{proof}
The sequences $a_{\l}$, $b_{\l}$, and $r_{\l}$ are plotted in Figure~\ref{fig:seq_abr}, and the sequences $c_{\l, C}^{2}$ and $c_{\l, D}^{2}$ are plotted in Figure~\ref{fig:seq_c}.
\begin{figure}[!ht]
\centering
\includegraphics[width=.3\textwidth,keepaspectratio]{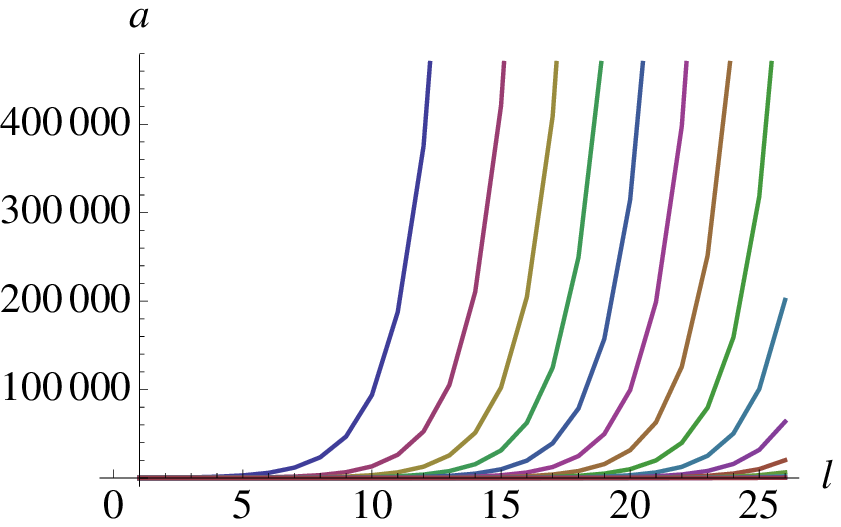}
\quad
\includegraphics[width=.3\textwidth,keepaspectratio]{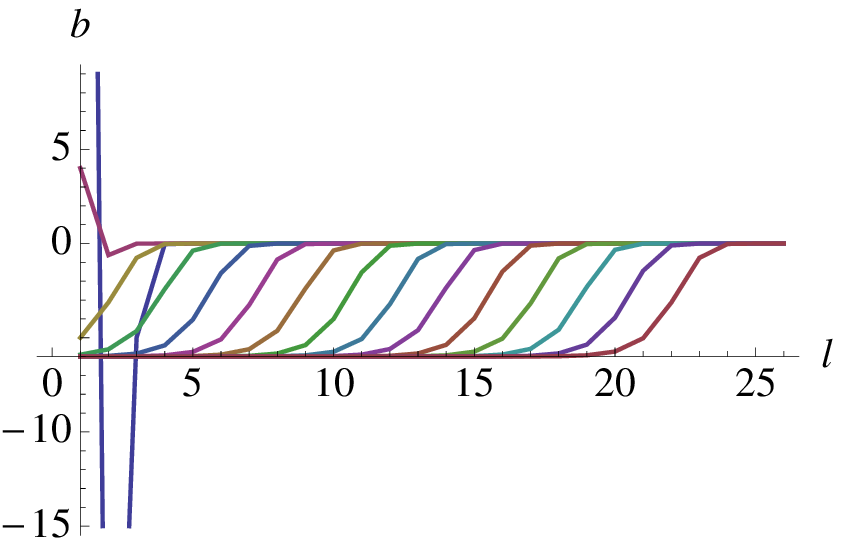}
\quad
\includegraphics[width=.3\textwidth,keepaspectratio]{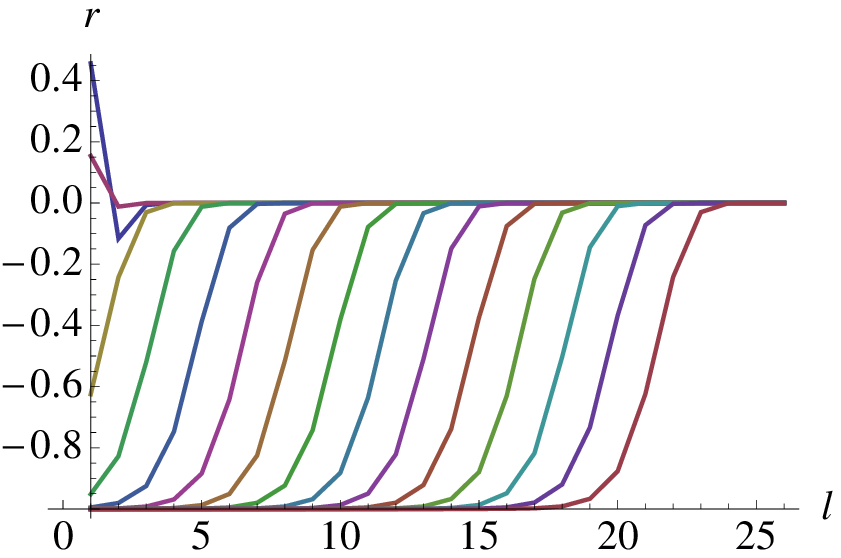}
\caption{$a_{\l}$, $b_{\l}$, and $r_{\l}$ for $e = 10^{m_{0}}$, where $m_{0} = \{2,1,0, \ldots -11,-12\}$ (left to right)}
\label{fig:seq_abr}
\end{figure}
\begin{figure}[!ht]
\centering
\begin{subfigure}{.495\textwidth}
\centering
\includegraphics[width=.99\textwidth,keepaspectratio]{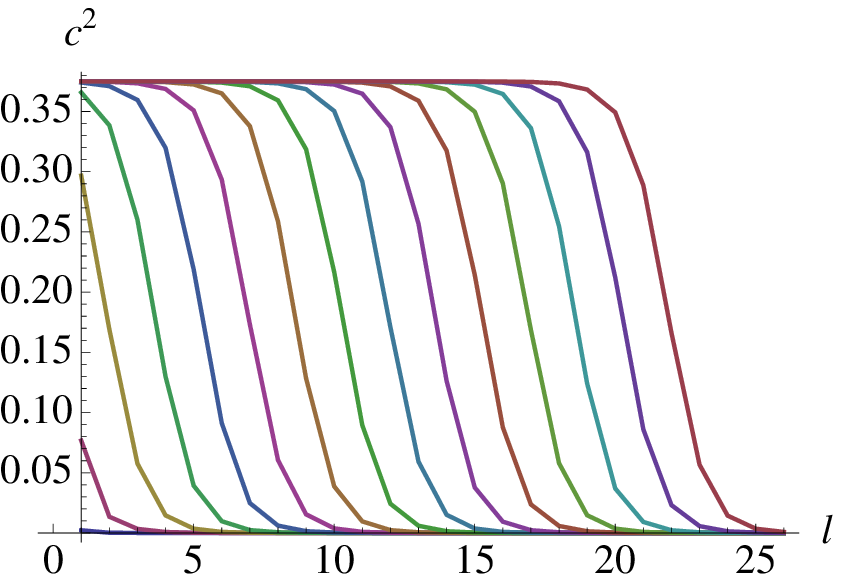}
\caption{$c_{\l, C}^{2}$}
\end{subfigure}
\begin{subfigure}{.495\textwidth}
\centering
\includegraphics[width=.99\textwidth,keepaspectratio]{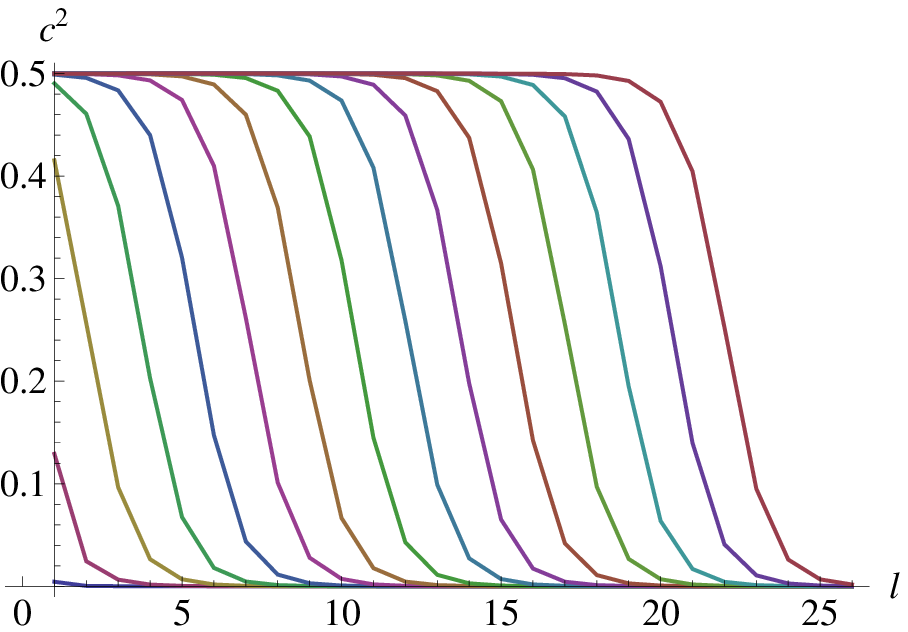}
\caption{$c_{\l, D}^{2}$}
\end{subfigure}
\caption{$c_{\l}^{2}$ for $e = 10^{m_{0}}$, where $m_{0} = \{2,1,0, \ldots -11,-12\}$ (left to right)}
\label{fig:seq_c}
\end{figure}
We are now in a position to prove the following theorem which provides a theoretical estimate that holds on all levels of recursive splitting of the $\N$ subspace of $\Hcurl$, and $\RTN$ subspace of $\Hdiv$.
\begin{theorem}\label{thm:Gamma}
Consider the bilinear form \eqref{eq:ModProb}, where $0 < \alpha, \beta < \infty$, and the related discrete problem (\ref{eq:ModProbDiscrete}) on the $\N$ subspace of two-dimensional $\Hcurl$ or $\RTN$ subspace of three-dimensional $\Hdiv$. Assuming that the underlying mesh is uniform, the CBS constant $\gamma$ related to the hierarchical splitting (\ref{Splitting:DS}) has the upper bound ${\displaystyle \gamma \leq \gamma_{G} < \sqrt{\Theta}}$, where $\Theta$ is $3/8$ for two-dimensional $\Hcurl$ problem, and $1/2$ for three-dimensional $\Hdiv$ problem. This upper bound holds for each step of the recursive hierarchical splitting. Moreover, $\gamma^{(L - \l)}$ is monotonically strictly decreasing and has an upper bound of $\sqrt{\Theta}$ for all $\l = 0, 1, \ldots, L$, i.e.,
\begin{align}
\gamma^{(0)} < \gamma^{(1)} < \ldots < \gamma^{(\l)} < \ldots < \gamma^{(L-1)} < \gamma^{(L)} < \sqrt{\Theta}.
\label{eq:bnd_GammaL}
\end{align}
\end{theorem}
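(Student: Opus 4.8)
The plan is to reduce the theorem to the algebraic sequence lemmas already established. The key observation is that the CBS constant $\gamma$ is estimated, via \eqref{G_FR}, through the minimal eigenvalue of the local generalized eigenproblem \eqref{LocGenEig} on each macro element $G$. So the first step is to compute explicitly, for a single macro-element, the Schur complement $S_G = B_{G,22} - B_{G,21} B_{G,11}^{-1} B_{G,12}$ and the aggregate block $B_{G,22}$ that arise from the hierarchical transformations \eqref{eq:J_G_N} (two-dimensional case) and \eqref{eq:J_G_RTN} (three-dimensional case), after static condensation of the interior DOF. I would carry out this computation on the reference macro-element using the element matrices $A_{K,C}$ in \eqref{eq:ElmMatAk_k_2D} and $A_{K,D}$ in \eqref{eq:ElmMatAk_k_3D}, where all quantities depend only on $e = \kappa h^2$. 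The expectation, by design of the hierarchical splitting, is that $\gamma_G^2 = c_{0,C}^2$ (with the constant defined in \eqref{eq:def_c2}) for the two-dimensional $\Hcurl$ problem, and $\gamma_G^2 = c_{0,D}^2$ (defined in \eqref{eq:def_c2_3D}) for the three-dimensional $\Hdiv$ problem.

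Once this identification is made, the level-zero bound follows immediately: Lemma~\ref{lem:seq_c2} gives $c_{0,C}^2 < 3/8$ and Lemma~\ref{lem:seq_c2_3D} gives $c_{0,D}^2 < 1/2$, so in both cases $\gamma_G < \sqrt{\Theta}$ with $\Theta$ as stated. Taking the maximum over all $G \in \mT$ and invoking \eqref{G_FR} then yields $\gamma \le \gamma_G < \sqrt{\Theta}$ for the finest-level splitting.

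The next step addresses the recursive structure. The essential point is that the coarse-grid matrix $A^{(\l-1)}$ produced at each static-condensation step is, up to the scaling factor, exactly an element matrix of the same form \eqref{eq:ElmMatAk_k_2D} (respectively \eqref{eq:ElmMatAk_k_3D}) but with $a_0, b_0$ replaced by the sequence values $a_\l, b_\l$ of Lemma~\ref{lem:seq_abr}. Thus I would verify that one application of the hierarchical transformation and Schur-complement reduction sends the pair $(a_\l, b_\l)$ to $(a_{\l+1}, b_{\l+1})$ according to the recursion \eqref{eq:seq_m}, so that the CBS constant at the coarser splitting step is $\gamma_G^2 = c_{\l,C}^2$ (resp.\ $c_{\l,D}^2$) at level $\l$. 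With the recursion established, the bound $\gamma^{(L-\l)} < \sqrt{\Theta}$ on every level follows from the uniform bounds $c_{\l,C}^2 < 3/8$ and $c_{\l,D}^2 < 1/2$ of the two lemmas, and the strict chain \eqref{eq:bnd_GammaL} follows from the monotonicity $c_{\l,C}^2 < c_{\l-1,C}^2$ (resp.\ $c_{\l,D}^2 < c_{\l-1,D}^2$): since the deepest recursion corresponds to the coarsest level $\l=0$ and carries the largest constant, while the finest level carries the smallest, the sequence $\gamma^{(0)} < \gamma^{(1)} < \cdots < \gamma^{(L)}$ is monotonically increasing and bounded above by $\sqrt{\Theta}$.

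The main obstacle is the explicit algebraic identification $\gamma_G^2 = c_{0}^2$ and, more importantly, the verification that the coarse matrix regenerates itself in the same parametric form under the recursion \eqref{eq:seq_m}. This requires carrying out the static condensation of the interior block $B_{G,11}$ and the Schur complement with respect to the aggregate block in closed form, tracking how the entries $2e+6$ and $e-6$ transform; the block-diagonal structure of the upper-left block of $\hat A_h$ noted in the text should make each local inversion tractable, but confirming that the transformed coarse-element matrix is again of the exact shape in \eqref{eq:ElmMatAk_k_2D} or \eqref{eq:ElmMatAk_k_3D} (so that the recursion closes) is the computational crux on which the whole multilevel argument rests.
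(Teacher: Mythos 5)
Your proposal follows essentially the same route as the paper's proof: compute $B_{G,22}$ and $S_G$ locally after static condensation, identify $\gamma_G^2$ with $c_{\l,C}^2$ (resp.\ $c_{\l,D}^2$), verify that the coarse matrix $B_{G,22}$ regenerates the element-matrix form \eqref{eq:ElmMatAk_k_2D}/\eqref{eq:ElmMatAk_k_3D} with $(a_{\l},b_{\l})$ mapped to $(a_{\l+1},b_{\l+1})$ by \eqref{eq:seq_m} (indeed, rescaling by the factor $4$, resp.\ $8$, from the mesh-size change gives $4q_{0}=b_{1}$, $4p_{0}=a_{1}$), and then invoke Lemmas~\ref{lem:seq_c2} and~\ref{lem:seq_c2_3D} for the uniform bound and strict monotonicity. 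One minor slip: since $\gamma^{(L-\l)}=c_{\l}$ and $c_{\l}$ is strictly decreasing in $\l$, the coarsest level $\gamma^{(0)}=c_{L}$ carries the \emph{smallest} constant (not the largest, as you wrote), which is precisely why the chain \eqref{eq:bnd_GammaL} increases toward the finest level; your concluding chain is nonetheless stated correctly.
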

\begin{proof}
In order to prove this uniform bound for $\gamma$ we study the generalized eigenproblem (\ref{LocGenEig}). At level $L$ of the finest discretization the macro-element matrix $\hA_G$, which is the same for all $G$ in $\mT_{h_L}$ for a uniform mesh, can be represented in the form
\begin{equation}\label{Rep_AGh_0}
\hA_G^{(L)} = J_G \left( \sum_{K \in G \subset \mT_{h_\ell}} R_K^T A_K^{(L)} R_K \right) J_G^T.
\end{equation}
We first focus on two-dimensional $\Hcurl$ problem, for which
\begin{align}
A_{K, C}^{(L)} = \frac{\beta}{6 h^{2}} \left [
\begin{array}{rrrr}
a_{0} & b_{0} & -6 & 6 \\
b_{0} & a_{0} & 6 & -6 \\
-6 & 6 & a_{0} & b_{0} \\
6 & -6 & b_{0} & a_{0} \\
\end{array}
\right ], \quad \forall K \in G, ~ \forall G \subset \mT_{h_L}.
\label{eq:ElmMatAL}
\end{align}
The variables $a_{0}$ and $b_{0}$ are defined in Lemma~\ref{lem:seq_abr}, $e$ and $\kappa$ are defined before \eqref{eq:ElmMatAk_k_2D}, and the local transformation matrix $J_{G}$ is defined according to \eqref{eq:J_G_N}.
The lower-right $4 \times 4$ block of the matrix $B_G$ and the Schur complement $S_G$ for the first splitting (at level $L$) are to be found
\begin{subequations}\label{eq:BG22_SG_L}
\begin{align}
B_{G,22}^{(L)} & = \frac{\beta}{6 h^{2}} \left [
\begin{array}{rrrr}
p_0 & q_0 & -3/2 & 3/2 \\
q_0 & p_0 & 3/2 & -3/2 \\
-3/2 & 3/2 & p_0 & q_0 \\
3/2 & -3/2 & q_0 & p_0
\end{array}
\right ],\\
S_{G}^{(L)} & =
\frac{\beta}{6 h^{2}} \left [
\begin{array}{rrrr}
s_0 & t_0 & -3/2 & 3/2 \\
t_0 & s_0 & 3/2 & -3/2 \\
-3/2 & 3/2 & s_0 & t_0 \\
3/2 & -3/2 & t_0 & s_0
\end{array}
\right ].
\end{align}
\end{subequations}
with
\begin{alignat*}{2}
q_{0} & = -b_{0}^{2}/4a_{0}, \quad
& p_{0} & = a_{0}/2 + q_{0} ,\\
t_{0} & = \dfrac{36a_{0} + 72b_{0} + a_{0}b_{0}^{2}}{144 - 4a_{0}^{2}}, \quad
& s_{0} & =  a_{0}/2 + t_{0}.
\end{alignat*}
The generalized eigenproblem (\ref{LocGenEig}) has two different two-fold eigenvalues, namely $\lambda_{1,2} = 1$ and
\[
\lambda_{3,4} = \dfrac{a_{0}(a_{0}^{2} - a_{0}b_{0} - 72)}{(a_{0}^{2} - 36)(a_{0} - b_{0})},
\]
which shows that
\begin{equation}\label{gamma_G_0}
\left(\gamma_G^{(L)}\right)^{2} \le 1 - \lambda_{3,4} = \dfrac{36(a_{0} + b_{0})}{(a_{0}^{2} - 36)(a_{0} - b_{0})} .
\end{equation}
Note that the coefficient $\beta$ does not appear in the bound for $\gamma$ since the factor $\frac{\beta}{6 h^{2}}$ appear in both the matrices of the generalized eigenproblem (\ref{LocGenEig}), and thus does not affect the eigenvalues.
Now in order to compute a similar bound for the second splitting (at level $L-1$) we have to use the relation $A_K^{(L-1)} := B_{G,22}^{(L)}$. In general, for the $(\l+1)^{\mathrm{th}}$ splitting (at level $L-\l$) the relation
\begin{equation}\label{AK_BG}
A_K^{(L - \l)} := B_{G,22}^{(L - \l + 1)}
\end{equation}
is to be used in the assembly of $\hA_G^{L - \l}$, i.e.,
\begin{equation}\label{eq:Ass_AGh_lk}
\hA_G^{(L - \l)} = J_G \left( \sum_{K \in G \subset \mT_{h_{L - \l}}} R_K^T A_K^{(L - \l)} R_K \right) J_G^T .
\end{equation}
Repeating the computations, we find that the relation \eqref{eq:Ass_AGh_lk} holds for all levels $\l = 1, 2, \ldots, L-1, L$, and the element stiffness matrix $A_{K}^{L - \l}$ (after $\l$ coarsening steps) is given by
\begin{align}
A_{K, C}^{(L - \l)} =
\frac{\beta}{6 (2^{\l} h)^{2}}
\left [
\begin{array}{rrrr}
a_{\l} & b_{\l} & -6 & 6 \\
b_{\l} & a_{\l} & 6 & -6 \\
-6 & 6 & a_{\l} & b_{\l} \\
6 & -6 & b_{\l} & a_{\l} \\
\end{array}
\right ], \quad \forall K \in G, ~ \forall G \subset \mT_{h_{L - \l}} ,
\label{eq:ElmMatAl}
\end{align}
where the sequences $a_{\l}$ and $b_{\l}$ are defined in \eqref{eq:seq}. Thus, the bound for $\gamma_G$ at level $L-\l$ reads
\begin{align}
\bigl(\gamma^{(L-\l)}_{G}\bigr)^{2} = \dfrac{36(a_{\l} + b_{\l})}{(a_{\l}^{2} - 36)(a_{\l} - b_{\l})}.
\label{eq:def_GammaL}
\end{align}
The result \eqref{eq:bnd_GammaL} then follows by taking $\gamma^{L-\l}_{G} = c_{\l, C}$, where $c_{\l, C}$ is defined in Lemma~\ref{lem:seq_c2}.
For three-dimensional $\Hdiv$ problem we have
\begin{align}
A_{K, D}^{(L)} = \frac{\beta}{6 h^{3}} \left [
\begin{array}{rrrrrr}
a_{0} & b_{0} & 6 & -6 & 6 & -6 \\
b_{0} & a_{0} & -6 & 6 & -6 & 6 \\
6 & -6 & a_{0} & b_{0} & 6 & -6 \\
-6 & 6 & b_{0} & a_{0} & -6 & 6 \\
6 & -6 & 6 & -6 & a_{0} & b_{0} \\
-6 & 6 & -6 & 6 & b_{0} & a_{0}
\end{array}
\right ], \quad \forall K \in G, ~ \forall G \subset \mT_{h_L},
\label{eq:ElmMatAL_3D}
\end{align}
and the local transformation matrix $J_{G}$ is defined according to \eqref{eq:J_G_RTN}.
The lower-right $6 \times 6$ block of the matrix $B_G$ and the Schur complement $S_G$ for the first splitting (at level $L$) are to be found (using e.g., Mathematica \cite{mathematica})
\begin{subequations}\label{eq:BG22_SG_L_3D}
\begin{align}
B_{G,22}^{(L)} & = \frac{\beta}{6 h^{3}} \left [
\begin{array}{rrrrrr}
p_{0} & q_{0} & 3/4 & -3/4 & 3/4 & -3/4 \\
q_{0} & p_{0} & -3/4 & 3/4 & -3/4 & 3/4 \\
3/4 & -3/4 & p_{0} & q_{0} & 3/4 & -3/4 \\
-3/4 & 3/4 & q_{0} & p_{0} & -3/4 & 3/4 \\
3/4 & -3/4 & 3/4 & -3/4 & p_{0} & q_{0} \\
-3/4 & 3/4 & -3/4 & 3/4 & q_{0} & p_{0}
\end{array}
\right ], \\
S_{G}^{(L)} & = \frac{\beta}{6 h^{3}} \left [
\begin{array}{rrrrrr}
s_{0} & t_{0} & 3/4 & -3/4 & 3/4 & -3/4 \\
t_{0} & s_{0} & -3/4 & 3/4 & -3/4 & 3/4 \\
3/4 & -3/4 & s_{0} & t_{0} & 3/4 & -3/4 \\
-3/4 & 3/4 & t_{0} & s_{0} & -3/4 & 3/4 \\
3/4 & -3/4 & 3/4 & -3/4 & s_{0} & t_{0} \\
-3/4 & 3/4 & -3/4 & 3/4 & t_{0} & s_{0}
\end{array}
\right ],
\end{align}
\end{subequations}
with
\begin{alignat*}{2}
q_{0} & = -b_{0}^{2}/8a_{0} , \quad & p_{0} & = a_{0}/4 + q_{0} , \\
t_{0} & = \dfrac{-72 a_{0} - 144 b_{0} - 6 b_{0}^{2} - a_{0} b_{0}^{2}}{8(a_{0} - 6)(a_{0} + 12)} ,
\quad
& s_{0} & = a_{0}/4 + t_{0}.
\end{alignat*}
The generalized eigenproblem (\ref{LocGenEig}) has two different three-fold eigenvalues, namely $\lambda_{1, 2, 3} = 1$ and
\[
\lambda_{4, 5, 6} = \dfrac{a_{0}(a_{0}^{2} - a_{0}b_{0} + 6 a_{0} - 6 b_{0} - 144)}{(a_{0} + 12)(a_{0} - 6)(a_{0} - b_{0})},
\]
which shows that
\begin{equation}\label{gamma_G_0_3D}
\left(\gamma_G^{(L)}\right)^{2} \le 1 - \lambda_{4, 5, 6} = \dfrac{72(a_{0} + b_{0})}{(a_{0} + 12)(a_{0} - 6)(a_{0} - b_{0})} .
\end{equation}
As before, to compute a similar bound for the $(\l+1)^{\mathrm{th}}$ splitting the relation \eqref{AK_BG} is to be used in the assembly of $\hA_G^{L - \l}$, see \eqref{eq:Ass_AGh_lk}.
Repeating the computations, we find that the relation \eqref{eq:Ass_AGh_lk} holds for all levels $\l = 1, 2, \ldots, L-1, L$, and the element stiffness matrix $A_{K}^{L - \l}$ (after $\l$ coarsening steps) is given by
\begin{align}
A_{K, D}^{(L - \l)} =
\frac{\beta}{6 (2^{\l} h)^{3}}
\left [
\begin{array}{rrrrrr}
a_{\l} & b_{\l} & 6 & -6 & 6 & -6 \\
b_{\l} & a_{\l} & -6 & 6 & -6 & 6 \\
6 & -6 & a_{\l} & b_{\l} & 6 & -6 \\
-6 & 6 & b_{\l} & a_{\l} & -6 & 6 \\
6 & -6 & 6 & -6 & a_{\l} & b_{\l} \\
-6 & 6 & -6 & 6 & b_{\l} & a_{\l}
\end{array}
\right ], \quad \forall K \in G, ~ \forall G \subset \mT_{h_{L - \l}}.
\label{eq:ElmMatAl_3D}
\end{align}
Thus, the bound for $\gamma_G$ at level $L-\l$ reads
\begin{align}
\bigl(\gamma^{(L-\l)}_{G}\bigr)^{2} = \dfrac{72(a_{\l} + b_{\l})}{(a_{\l} + 12)(a_{\l} - 6)(a_{\l} - b_{\l})}.
\label{eq:def_GammaL_3D}
\end{align}
The result \eqref{eq:bnd_GammaL} then follows by taking $\gamma^{L - \l}_{G} = c_{\l, D}$, where $c_{\l, D}$ is defined in Lemma~\ref{lem:seq_c2_3D}.
\end{proof}
\begin{remark}
The curves in Figure~\ref{fig:seq_c} show the behavior of $\gamma^{2}_{G}$ (defined by \eqref{eq:def_GammaL} and \eqref{eq:def_GammaL_3D}). We observe that $\gamma^{2}_{G}$ approaches zero when the splitting is applied many times (increasing $\l$ from left to right), which means that the two subspaces $\mV_{1}$ and $\mV_{2}$ in (\ref{Splitting:DS}) become increasingly orthogonal to each other as the recursion proceeds. Therefore, on (very) coarse levels, the upper bound $\Theta$ for $\gamma^{2}_{G}$, and thus for $\gamma^{2}$, is quite pessimistic.
\end{remark}
\begin{remark}
Note that the lowest order Raviart-Thomas (respectively Raviart-Thomas-Nedelec) type elements on general quadrilateral (respectively hexahedral) meshes do not show any convergence for the divergence of the field \cite{ArnoldBF-05}. In such cases, one can use, e.g., Arnold-Boffi-Falk type elements \cite{ArnoldBF-05}. However, the presented analysis won't suffice for such elements, and further work will be needed.
\end{remark}

\section{Algorithmic aspects}
\label{sec:Alg}
In this section we present the algorithms which have been used in this article for the solution of $M z = r$, the step used in preconditioned conjugate gradient method (PCG) for linear AMLI or flexible conjugate gradient method (FCG) for nonlinear AMLI. The algorithms, presented as pseudocodes with a compact syntax/style close to the \verb!matlab!$^{\circledR}$ language \cite{matlab}, should be helpful to the practitioners in the respective fields \footnote{The variable names listed in \textbf{Require} may be defined globally or passed as arguments.} . The preconditioner $M$, as explained in Section~\ref{sec:Prec}, requires the solution of nested systems $\hat{A} z = r$, and $B v = w$, where the matrices $\hat{A}$ and $B$ are defined in \eqref{eq:hat_A_LU} and \eqref{eq:B2x2}, respectively. Using the factorization \eqref{eq:hat_A_LU} we rewrite $\hat{A} z = r$ as follows
\begin{align}
\label{eq:TriangSys_A}
\left[ \begin{array}{cc}
\hA_{11} & 0  \\
\hA_{21} & B
\end{array} \right]
\left[ \begin{array}{c}
y_{1}  \\
y_{2}
\end{array} \right]
= 
\left[ \begin{array}{c}
r_{1}  \\
r_{2}
\end{array} \right], \quad
\left[ \begin{array}{cc}
I_1 & \hA_{11}^{-1} \hA_{12}  \\
0 & I_2
\end{array} \right]
\left[ \begin{array}{c}
z_{1}  \\
z_{2}
\end{array} \right]
= 
\left[ \begin{array}{c}
y_{1}  \\
y_{2}
\end{array} \right].
\end{align}
Similarly, using the partitioning \eqref{eq:B2x2} we rewrite $B v = w$ as follows
\begin{align}
\label{eq:TriangSys_B}
\left[ \begin{array}{cc}
B_{11} & 0  \\
B_{21} & B_{22}
\end{array} \right]
\left[ \begin{array}{c}
t_{1}  \\
t_{2}
\end{array} \right]
= 
\left[ \begin{array}{c}
w_{1}  \\
w_{2}
\end{array} \right], \quad
\left[ \begin{array}{cc}
I_3 & B_{11}^{-1} B_{12}  \\
0 & I_4
\end{array} \right]
\left[ \begin{array}{c}
v_{1}  \\
v_{2}
\end{array} \right]
= 
\left[ \begin{array}{c}
t_{1}  \\
t_{2}
\end{array} \right].
\end{align}
Note that in \eqref{eq:TriangSys_B} the matrix $B_{22}$ is an approximation of the exact Schur complement $S = B_{22} - B_{21} B_{11}^{-1} B_{12}$. Given the \emph{exact} $LU$ factors $L_{11}^{\hat{A}}$ and $U_{11}^{\hat{A}}$ of $\hat{A}_{11}$, and incomplete $LU$ factors $L_{11}^{B}$ and $U_{11}^{B}$ of $B_{11}$,  the Algorithms~\ref{algo:Solve_L} and \ref{algo:Solve_U} solve the triangular systems in \eqref{eq:TriangSys_A}-\eqref{eq:TriangSys_B}. Note that, since $v_{2} = t_{2}$, the solution of
\begin{align*}
B_{22} v_{2} = w_{2} - B_{21} t_{1} =: w_{c}
\end{align*}
is performed at the next coarser level with the recursive application of AMLI algorithm.
\begin{algorithm}[ht!]
\caption{Solve lower triangular system}
\label{algo:Solve_L}
\begin{algorithmic}
\Require $L_{11}^{\hat{A}}, U_{11}^{\hat{A}}, \hat{A}_{12}, L_{11}^{B}, U_{11}^{B}, B_{12}$
\Function{[$y_{1}, t_{1}, w_{c}$] = SolveL}{$r$}
\State $y_{1} = U_{11}^{\hat{A}} \backslash (L_{11}^{\hat{A}} \backslash r_{1})~; \quad
w = r_{2} - (\hat{A}_{12})^{T} y_{1}~;$
\hfill \Comment{See \eqref{eq:TriangSys_A} for the dimensions of $r_{1}$ and $r_{2}$}
\State $t_{1} = U_{11}^{B} \backslash (L_{11}^{B} \backslash w_{1})~;$
\hfill \Comment{See \eqref{eq:TriangSys_B} for the dimensions of $w_{1}$ and $w_{2}$}
\If {preconditioner is additive}
\State $w_{c} = w_{2}~;$
\Else
\State $w_{c} = w_{2} - (B_{12})^{T} t_{1}~;$
\EndIf
\EndFunction
\end{algorithmic}
\end{algorithm}
\begin{algorithm}[ht!]
\caption{Solve upper triangular system}
\label{algo:Solve_U}
\begin{algorithmic}
\Require $L_{11}^{\hat{A}}, U_{11}^{\hat{A}}, \hat{A}_{12}, L_{11}^{B}, U_{11}^{B}, B_{12}$
\Function{$z$ = SolveU}{$v_{2}, t_{1}, y_{1}$}
\If {preconditioner is additive}
\State $v_{1} = t_{1}~;$
\Else
\State $v_{1} = t_{1} - U_{11}^{B} \backslash (L_{11}^{B} \backslash (B_{12} v_{2}))~;$
\EndIf
\State $z_{2} = [v_{1}~;~ v_{2}]~; \quad
z_{1} = y_{1} - U_{11}^{\hat{A}} \backslash (L_{11}^{\hat{A}} \backslash (\hat{A}_{12} z_{2}))~; \quad
z = [z_{1}~;~ z_{2}]~;$
\EndFunction
\end{algorithmic}
\end{algorithm}
We now first present the algorithm for the linear AMLI method. This algorithm is adapted from \cite{AxelssonV-90, KrausMargenov-09, Vassilevski-08}. The linear AMLI algorithm requires the computation of coefficients $q_{i}, i = 0 \ldots \nu - 1$, from properly shifted and scaled Chebyshev polynomials. The algorithm presented below is for fixed $V$- or $\nu$-cycle for all levels ($\nu$-cycle also has the $V$-cycle at the finest level), which is commonly used in practice. For varying $V$- or $\nu$-cycles at any given level (and thus having more involved algorithm), see e.g., \cite[Alg.~10.1]{KrausMargenov-09}. \footnote{The vector ${d}^{(k-1)}$ in the right hand side of \cite[(10.6)]{KrausMargenov-09} is erroneous, and should be replaced by ${w}^{(k-1)}$, see \cite[(3.6)]{AxelssonV-90}.}
\begin{algorithm}[!ht]
\caption{Linear AMLI}
\label{algo:AMLI_L}
\begin{algorithmic}
\Require $\nu, q, J, B_{22}$
\Function{$z$ = LAMLI}{$r, L, \l$}
\State $r = J r~; \quad
[y_{1}, t_{1}, w_{c}] = \textsc{SolveL}(r)~;$
\If {$\l = L$} \hfill \Comment{Finest level, only $V$-cycle}
\State $r_{c} = w_{c}~; \quad
v_{2} = \textsc{SolveV2}(r_{c}, L, \l)~;$
\Else \hfill \Comment{Coarser levels, $V$- or $\nu$-cycle}
\State $r_{c} = q_{\nu - 1} w_{c}~; \quad
v_{2} = \textsc{SolveV2}(r_{c}, L, \l)~;$
\For{$\sigma = 2 : \nu$}
\State $r_{c} = B_{22} v_{2} + q_{\nu - \sigma} w_{c}~; \quad
v_{2} = \textsc{SolveV2}(r_{c}, L, \l)~;$
\EndFor
\EndIf
\State $z = \textsc{SolveU}(v_{2}, t_{1}, y_{1})~; \quad
z = J^{T} z~;$
\EndFunction
\Function{$v_{2}$ = SolveV2}{$r_{c}, L, \l$}
\If {$\l - 1 = 0$}
\State $v_{2} = B_{22} \backslash r_{c}~;$ \hfill \Comment{Exact solve at coarsest level}
\Else
\State $v_{2} = \mathrm{LAMLI(r_{c}, L, \l - 1)}~;$ \hfill \Comment{Recursive call to LAMLI for intermediate levels}
\EndIf
\EndFunction
\end{algorithmic}
\end{algorithm}
Finally, we present the nonlinear AMLI algorithm. This algorithm is adapted from \cite{AxelssonV-90, AxelssonV-94, KrausMargenov-09, Notay-00, Notay-02, Vassilevski-08}. Again, the algorithm presented below is for fixed $V$- or $\nu$-cycle for all levels, and thus has simpler presentation than for varying $V$- or $\nu$-cycles at any given level (see e.g., \cite[Alg.~10.2]{KrausMargenov-09}, \cite[Alg.~5.4]{AxelssonV-94} or \cite[Alg.~6.1]{Notay-02} for the latter). \footnote{The algorithm presented in \cite[Alg.~10.2]{KrausMargenov-09} recursively updates the vector $q$ in the \textbf{for} loop on $j$, which is not what was originally proposed in other two references.}
\begin{algorithm}[!ht]
\caption{Nonlinear AMLI}
\label{algo:AMLI_N}
\begin{algorithmic}
\Require $\nu, J, \hat{A}, B_{22}$
\Function{$z$ = NAMLI}{$r, L, \l$}
\State $z = 0~; \quad
r = J r~;$
\State $[y_{1}, t_{1}, r_{c}] = \textsc{SolveL}(r)~; \quad
v_{2} = \textsc{SolveV2}(r_{c}, L, \l)~;$
\If {$\l = L$} \hfill \Comment{Finest level, only $V$-cycle}
\State $p = \textsc{SolveU}(v_{2}, t_{1}, y_{1})~; \quad
z = z + p~;$
\Else \hfill \Comment{Coarser levels, $V$- or $\nu$-cycle}
\State $p_{1} = \textsc{SolveU}(v_{2}, t_{1}, y_{1})~; \quad
q_{1} = \hat{A} p_{1}~;$
\State $\tau_{1} = p_{1}^{T} q_{1}~; \quad
\alpha = (r^{T}, p_{1})/\tau_{1}~;$
\State $z = z + \alpha p_{1}~; \quad
r = r - \alpha q_{1}~;$
\For{$\sigma = 2 : \nu$}
\State $[y_{1}, t_{1}, r_{c}] = \textsc{SolveL}(r)~; \quad
v_{2} = \textsc{SolveV2}(r_{c}, L, \l)~; \quad
p_{\sigma} = \textsc{SolveU}(v_{2}, t_{1}, y_{1})~;$
\State $s = 0~;$
\For{$j = 1 : \sigma - 1$}
\State $\beta = (p_{\sigma}^{T} q_{j})/\tau_{j}~; \quad
s = s - \beta p_{j}~;$
\EndFor
\State $p_{\sigma} = p_{\sigma} + s~; \quad
q_{\sigma} = \hat{A} p_{\sigma}~;$
\State $\tau_{\sigma} = p_{\sigma}^{T} q_{\sigma}~; \quad
\alpha = (r^{T} p_{\sigma})/\tau_{\sigma}~;$
\State $z = z + \alpha p_{\sigma}~; \quad
r = r - \alpha q_{\sigma}~;$
\EndFor
\EndIf
\State $z = J^{T} z~;$
\EndFunction
\Function{$v_{2}$ = SolveV2}{$r_{c}, L, \l$}
\If {$\l - 1 = 0$}
\State $v_{2} = B_{22} \backslash r_{c}~;$ \hfill \Comment{Exact solve at coarsest level}
\Else
\State $v_{2} = \mathrm{NAMLI(r_{c}, L, \l - 1)}~;$ \hfill \Comment{Recursive call to NAMLI for intermediate levels}
\EndIf
\EndFunction
\end{algorithmic}
\end{algorithm}
\section{Numerical results}
\label{sec:NumRes}
All the numerical experiments presented in this section are performed using \verb!matlab!$^{\circledR}$ R2012b on an HP Z420 workstation with 12 core 3.2 GHz CPU and 64 GB RAM.
The initial guess is chosen as a zero vector, and the stopping criteria is chosen as $\epsilon \le 10^{-8}$, where $\epsilon$ and the average residual reduction factor $\rho$ are defined as
\[
\epsilon := \Vert r^{(n_{\rm it})} \Vert / \Vert r^{(0)} \Vert ~, \quad
\rho := \epsilon^{\frac{1}{n_{\rm it}}},
\]
and $n_{\rm it}$ is the number of iterations reported in the tables.
\subsection{Two-dimensional $\Hcurl$ problem}
\label{sec:NumRes_2D}
We first present numerical results for two-dimensional $\Hcurl$ problem. For all the numerical experiments, we consider a mesh of square elements of size $h=1/8, 1/64, \ldots, 1/2048$ (i.e., up to $8,392,704$ DOF for the finest level). We use a direct solver on the coarsest mesh that consists of $4 \times 4$ elements. Hence, the multilevel procedure is based on $1$ to $9$ levels of regular mesh refinement (resulting in an $\l$-level method, $\l = 3, \ldots, 11$).
\begin{example}
Consider the model problem \eqref{eq:ModProb} in a unit square, and fix the coefficients $\alpha = \beta = 1$. The problem data is chosen such that the exact solution is given by $\bu = (\pi \sin \pi x \cos \pi y, -\pi \cos \pi x \sin \pi y)^{T}$.
\end{example}
For the $W$-cycle method, we chose two-types of stabilization polynomials $q^{(\l)}$. One is based on Chebyshev polynomials (see, e.g., \cite{AxelssonV-90, KrausMargenov-09, Vassilevski-08}, denoted in the tables by $T$), for which the polynomial $q^{(\l)}(x)$ is defined as ${2}/{(s - b)} - {x}/{(s - b)^{2}}$, where $s = \sqrt{1 + b + b^{2} -\gamma^{2}}$, and $b$ is some constant estimating the upper bound of the condition number of preconditioned $B_{11}$ block, see the Appendix for details.
The other one is based on the polynomial of best uniform approximation to $1/x$ (see, e.g., \cite{KrausVZ-12}, denoted in the tables by $X$), for which the polynomial $q^{(\l)}(x)$ is defined as ${(2 - \gamma^{2})}/{(1 - \gamma^{2})} - {x}/{(1 - \gamma^{2})}$.
The results for the $V$-cycle and $W$-cycle multiplicative AMLI method are presented in Table~\ref{cap:ConvM_a1}. The second column confirms the error convergence behavior. We see that for decreasing $h$ the growth in the iteration number for $V$-cycle is moderate (as expected), whereas both the $W$-cycle versions ($T$ and $X$) exhibit $h$-independence. Moreover, the total time (factorization and solver) reported in eighth and eleventh columns also confirms that both the versions of $W$-cycle are of practical optimal complexity (slight increase in time may be attributed to the implementation issues). We note that in the multiplicative preconditioning the $X$-version $W$-cycle gives slightly better results than the $T$-version $W$-cycle.
\begin{table}[!htp]
\caption{Convergence results for multiplicative AMLI, $\alpha = \beta = 1$, $\chi = \bu - \buh$}
\label{cap:ConvM_a1}
\begin{center}
\begin{tabular}{|l|c|c|c|r|c|c|r|c|c|r|}
\hline
& & \multicolumn{3}{c|}{$V$-cycle} & \multicolumn{3}{c|}{$W$-cycle ($T$)} & \multicolumn{3}{c|}{$W$-cycle ($X$)}
\\
\hline
$1/h$ & $\Vert \curl \chi \Vert_{L^{2}(\Omega)}$ &
$n_{\rm it}$ & $\rho$ & $t_{\mathrm{sec}}$ &
$n_{\rm it}$ & $\rho$ & $t_{\mathrm{sec}}$ &
$n_{\rm it}$ & $\rho$ & $t_{\mathrm{sec}}$ \\
\hline
8 & 0.15946423 & 7 & 0.049 & 0.00 & 7 & 0.049 & 0.00 & 7 & 0.049 & 0.00 \\
16 & 0.08005229 & 8 & 0.094 & 0.01 & 8 & 0.083 & 0.01 & 8 & 0.094 & 0.01 \\
32 & 0.04006629 & 10 & 0.143 & 0.01 & 9 & 0.104 & 0.01 & 8 & 0.097 & 0.01 \\
64 & 0.02003817 & 11 & 0.174 & 0.04 & 9 & 0.105 & 0.05 & 8 & 0.100 & 0.04 \\
128 & 0.01001971 & 12 & 0.201 & 0.14 & 9 & 0.108 & 0.16 & 8 & 0.095 & 0.14 \\
256 & 0.00500993 & 13 & 0.224 & 0.54 & 9 & 0.109 & 0.55 & 8 & 0.088 & 0.51 \\
512 & 0.00250498 & 14 & 0.246 & 2.41 & 9 & 0.110 & 2.22 & 8 & 0.083 & 2.09 \\
1024 & 0.00125249 & 14 & 0.267 & 10.74 & 9 & 0.110 & 9.35 & 8 & 0.078 & 8.99 \\
2048 & 0.00062624 & 16 & 0.313 & 49.79 & 9 & 0.110 & 40.29 & 8 & 0.073 & 38.74 \\
\hline
\end{tabular}
\end{center}
\end{table}
We now test the AMLI method with additive preconditioning. The results for the $V$-cycle and both the $W$-cycle additive AMLI methods are presented in Table~\ref{cap:ConvA_a1}.
We also present the results for nonlinear variant of AMLI method, see e.g., \cite{AxelssonV-91, AxelssonV-94, Kraus-02, KrausMargenov-09, Notay-00, Notay-02, NotayV-08}, in the last three columns (denoted in the tables by $N$, $W$-cycle referring to two inner iterations). Surprisingly, in the additive form, the $T$-version $W$-cycle gives much better results than the $X$-version $W$-cycle, where the latter appears to be stabilizing only towards very fine mesh (many recursive levels). This can be attributed to the fact that for the additive preconditioning, for the choice of $\gamma = \sqrt{3/8}$, we require that $\nu > \sqrt{{(1 + \gamma)}/{(1 - \gamma)}} > 2$, which does not hold for (both) the $W$-cycle.
The results of nonlinear $W$-cycle further improve the results of $T$-version $W$-cycle (linear).
Since the nonlinear $W$-cycle AMLI method gives the best results (and is free from parameters $b$ and $\gamma$), in the remaining numerical experiments we will only present the results from multiplicative form of $V$-cycle and nonlinear $W$-cycle AMLI method.
\begin{table}[!htp]
\caption{Convergence results for additive AMLI, $\alpha = \beta = 1$}
\label{cap:ConvA_a1}
\begin{center}
\begin{tabular}{|l|c|c|r|c|c|r|c|c|r|c|c|r|}
\hline
& \multicolumn{3}{c|}{$V$-cycle} & \multicolumn{3}{c|}{$W$-cycle ($T$)}
& \multicolumn{3}{c|}{$W$-cycle ($X$)} & \multicolumn{3}{c|}{$W$-cycle ($N$)}
\\
\hline
$1/h$ &
$n_{\rm it}$ & $\rho$ & $t_{\mathrm{sec}}$ &
$n_{\rm it}$ & $\rho$ & $t_{\mathrm{sec}}$ &
$n_{\rm it}$ & $\rho$ & $t_{\mathrm{sec}}$ &
$n_{\rm it}$ & $\rho$ & $t_{\mathrm{sec}}$ \\
\hline
8 & 10 & 0.153 & 0.00 & 10 & 0.153 & 0.00 & 10 & 0.153 & 0.00 & 10 & 0.153 & 0.00 \\
16 & 17 & 0.300 & 0.01 & 17 & 0.299 & 0.01 & 17 & 0.299 & 0.01 & 12 & 0.208 & 0.01 \\
32 & 20 & 0.391 & 0.02 & 19 & 0.346 & 0.03 & 23 & 0.446 & 0.03 & 12 & 0.209 & 0.03 \\
64 & 25 & 0.472 & 0.06 & 19 & 0.372 & 0.08 & 31 & 0.550 & 0.13 & 12 & 0.197 & 0.08 \\
128 & 30 & 0.538 & 0.21 & 21 & 0.386 & 0.26 & 44 & 0.653 & 0.47 & 11 & 0.179 & 0.23 \\
256 & 34 & 0.575 & 0.87 & 19 & 0.377 & 0.79 & 56 & 0.712 & 1.82 & 11 & 0.167 & 0.76 \\
512 & 39 & 0.617 & 3.99 & 19 & 0.361 & 3.04 & 60 & 0.735 & 6.85 & 9 & 0.127 & 2.61 \\
1024 & 44 & 0.657 & 19.20 & 19 & 0.362 & 12.46 & 65 & 0.751 & 28.90 & 9 & 0.117 & 10.65 \\
2048 & 50 & 0.685 & 91.72 & 19 & 0.371 & 52.99 & 65 & 0.752 & 121.49 & 8 & 0.098 & 43.03 \\
\hline
\end{tabular}
\end{center}
\end{table}
\begin{example}
Consider the model problem \eqref{eq:ModProb} in a unit square, fix the coefficient $\beta = 1$ and take $\alpha = 10^{m_{0}}$ for $m_{0} = \{-6, -3, 0, 3, 6 \}$. The right hand side (RHS) vector is all ones.
\end{example}
The results for the multiplicative AMLI method for varying $\alpha$ are presented in Table~\ref{cap:ConvM_av} for $V$- and nonlinear $W$-cycle. We see that the $V$-cycle shows some effect of $\alpha$, with a moderate growth in the number of iterations for decreasing $h$, however, the nonlinear $W$-cycle is independent of $h$, and is fully robust with respect to $\alpha$. Note that towards very large values of $\alpha$, the system matrix is well-conditioned, and the hierarchical splitting approaches orthogonal decomposition, therefore, the $V$-cycle method also exhibits optimal order complexity.
\begin{table}[!htp]
\caption{Convergence results for multiplicative AMLI, $\beta = 1, \alpha = 10^{m_{0}}$}
\label{cap:ConvM_av}
\begin{center}
\begin{tabular}{|l|cc|cc|cc|cc|cc|}
\hline
& \multicolumn{10}{c|}{$n_{\rm it}$} \\
\hline
{$\alpha \rightarrow$} &
\multicolumn{2}{c|}{$10^{-6}$} & \multicolumn{2}{c|}{$10^{-3}$} &
\multicolumn{2}{c|}{$10^{0}$} & \multicolumn{2}{c|}{$10^{3}$} &
\multicolumn{2}{c|}{$10^{6}$} \\
\hline
{$1/h$} & $V$ & $W$ & $V$ & $W$ & $V$ & $W$ & $V$ & $W$ & $V$ & $W$ \\
\hline
8 & 9 & 9 & 9 & 9 & 9 & 9 & 4 & 4 & 2 & 2 \\
16 & 12 & 10 & 12 & 10 & 12 & 10 & 7 & 6 & 2 & 2 \\
32 & 15 & 10 & 15 & 10 & 14 & 10 & 9 & 8 & 2 & 2 \\
64 & 17 & 10 & 17 & 10 & 16 & 10 & 11 & 9 & 2 & 2 \\
128 & 20 & 9 & 20 & 9 & 17 & 9 & 12 & 9 & 3 & 3 \\
256 & 22 & 9 & 22 & 9 & 18 & 9 & 14 & 9 & 4 & 4 \\
512 & 26 & 9 & 26 & 9 & 21 & 9 & 16 & 9 & 6 & 6 \\
1024 & 28 & 9 & 28 & 9 & 23 & 9 & 17 & 9 & 8 & 8 \\
2048 & 28 & 9 & 31 & 8 & 25 & 8 & 20 & 8 & 10 & 8 \\
\hline
\end{tabular}
\end{center}
\end{table}
\begin{example}
Consider the model problem \eqref{eq:ModProb} in a unit square, fix the coefficient $\alpha = 1$ and take $\beta = 10^{m_{0}}$ for $m_{0} = \{-6, -3, 0, 3, 6 \}$. The RHS vector is all ones.
\end{example}
The results for the multiplicative AMLI method for varying $\beta$ are presented in Table~\ref{cap:ConvM_bv} for $V$- and $W$-cycles. The results are qualitatively the same as in Table~\ref{cap:ConvM_av} for varying $\alpha$, with the parameter value reversing the behavior of the solver.
\begin{table}
\caption{Convergence results for multiplicative AMLI, $\alpha = 1$}
\label{cap:ConvM_bv}
\begin{center}
\begin{tabular}{|l|cc|cc|cc|cc|cc|}
\hline
& \multicolumn{10}{c|}{$n_{\rm it}$} \\
\hline
{$\beta \rightarrow$} &
\multicolumn{2}{c|}{$10^{-6}$} & \multicolumn{2}{c|}{$10^{-3}$} &
\multicolumn{2}{c|}{$10^{0}$} & \multicolumn{2}{c|}{$10^{3}$} &
\multicolumn{2}{c|}{$10^{6}$} \\
\hline
{$1/h$} & $V$ & $W$ & $V$ & $W$ & $V$ & $W$ & $V$ & $W$ & $V$ & $W$ \\
\hline
8 & 2 & 2 & 4 & 4 & 9 & 9 & 9 & 9 & 9 & 9 \\
16 & 2 & 2 & 7 & 6 & 12 & 10 & 12 & 10 & 12 & 10 \\
32 & 2 & 2 & 9 & 8 & 14 & 10 & 15 & 10 & 15 & 10 \\
64 & 2 & 2 & 11 & 9 & 16 & 10 & 17 & 10 & 17 & 10 \\
128 & 3 & 3 & 12 & 9 & 17 & 9 & 20 & 9 & 20 & 9 \\
256 & 4 & 4 & 14 & 9 & 18 & 9 & 22 & 9 & 22 & 9 \\
512 & 6 & 6 & 16 & 9 & 21 & 9 & 26 & 9 & 26 & 9 \\
1024 & 8 & 8 & 17 & 9 & 23 & 9 & 28 & 9 & 28 & 9 \\
2048 & 10 & 8 & 20 & 8 & 25 & 8 & 31 & 8 & 28 & 9 \\
\hline
\end{tabular}
\end{center}
\end{table}
\begin{example}
Consider the model problem \eqref{eq:ModProb} in a unit square, and fix the coefficient $\beta = 1$. The coefficient $\alpha$ is chosen as $1$ in $[0,0.5]^{2} \bigcup (0.5,1]^{2}$ and $\kappa$ elsewhere, where $\kappa = 10^{m_{0}}$, and $m_{0} = \{-6, -4, -2, 0 \}$. The RHS vector is all ones.
\end{example}
Finally, the results for the multiplicative AMLI method for the case with jump in the coefficients (aligned with the coarsest level mesh), which are presented in Table~\ref{cap:ConvM_jump} for $V$- and nonlinear $W$-cycles, show robustness with respect to the jump in the coefficients.
\begin{table}
\caption{Convergence results for multiplicative AMLI with jump in the coefficients, $\beta = 1$}
\label{cap:ConvM_jump}
\begin{center}
\begin{tabular}{|l|cc|cc|cc|cc|}
\hline
& \multicolumn{8}{c|}{$n_{\rm it}$} \\
\hline
{$\kappa \rightarrow$} &
\multicolumn{2}{c|}{$10^{0}$} & \multicolumn{2}{c|}{$10^{-2}$} &
\multicolumn{2}{c|}{$10^{-4}$} & \multicolumn{2}{c|}{$10^{-6}$} \\
\hline
{$1/h$} & $V$ & $W$ & $V$ & $W$ & $V$ & $W$ & $V$ & $W$ \\
\hline
8 & 9 & 9 & 10 & 10 & 10 & 10 & 10 & 10 \\
16 & 12 & 10 & 12 & 11 & 13 & 11 & 13 & 11 \\
32 & 14 & 10 & 15 & 11 & 15 & 11 & 16 & 11 \\
64 & 16 & 10 & 17 & 11 & 18 & 11 & 19 & 11 \\
128 & 17 & 9 & 20 & 11 & 20 & 11 & 21 & 11 \\
256 & 18 & 9 & 22 & 10 & 22 & 11 & 24 & 11 \\
512 & 21 & 9 & 23 & 10 & 26 & 11 & 26 & 11 \\
1024 & 23 & 9 & 26 & 10 & 28 & 11 & 28 & 11 \\
2048 & 25 & 8 & 28 & 10 & 32 & 11 & 32 & 11 \\
\hline
\end{tabular}
\end{center}
\end{table}
\subsection{Three-dimensional $\Hdiv$ problem}
\label{sec:NumRes_3D}
We now present the numerical results for three-dimensional $\Hdiv$ problem. For all the numerical experiments, we consider a uniformly refined mesh of cubic elements of size $h=1/4, \ldots, 1/128$ (i.e., up to $6,340,608$ DOF for the finest level). We use a direct solver on the coarsest mesh that consists of $2 \times 2$ elements. Hence, the multilevel procedure is based on $1$ to $6$ levels of regular mesh refinement (resulting in an $\l$-level method, $\l = 2, \ldots, 7$).
\begin{example}
Consider the model problem \eqref{eq:ModProb} in a unit cube, and fix the coefficients $\alpha = \beta = 1$. The problem data is chosen such that the exact solution is given by $\bu = \nabla (\sin \pi x \sin \pi y \sin \pi z)$.
\end{example}
For the linear AMLI $W$-cycle, here we only use the stabilization polynomial $q^{(\l)}(x)$ based on Chebyshev polynomials (and thus omit the notation $T$).
The results for the $V$-cycle and $W$-cycle multiplicative AMLI method are presented in Table~\ref{cap:ConvM_a1_3D}. The second column confirms the error convergence behavior. We see that for decreasing $h$ the growth in the iteration number for $V$-cycle is moderate (as expected), whereas both the $W$-cycle versions (linear and nonlinear) exhibit $h$-independence. Moreover, the total time (setup and solver) reported in eighth and eleventh columns also confirms that both the versions of $W$-cycle are of practical optimal complexity (slight increase in time may be attributed to the implementation issues). We note that the nonlinear $W$-cycle gives better results than the linear $W$-cycle. As a comparison, in the last column we report the timings required for the direct solver in \verb!matlab!$^{\circledR}$, which exhibit $\mathcal{O}(N_{L}^{2})$ complexity against the optimal $\mathcal{O}(N_{L})$ complexity of the presented AMLI method.
\begin{table}[!ht]
\caption{Convergence results for multiplicative AMLI, $\alpha = \beta = 1$, $\chi = \bu - \buh$}
\label{cap:ConvM_a1_3D}
\begin{center}
\begin{tabular}{|l|c|c|c|r|c|c|r|c|c|r|r|}
\hline
& & \multicolumn{3}{c|}{$V$-cycle} & \multicolumn{3}{c|}{Linear $W$-cycle} & \multicolumn{3}{c|}{Nonlinear $W$-cycle}
& $A_{h} \backslash f_{h}$
\\
\hline
$1/h$ & $\Vert \dvg \chi \Vert_{L^{2}(\Omega)}$ &
$n_{\rm it}$ & $\rho$ & $t_{\mathrm{sec}}$ &
$n_{\rm it}$ & $\rho$ & $t_{\mathrm{sec}}$ &
$n_{\rm it}$ & $\rho$ & $t_{\mathrm{sec}}$ &
$t_{\mathrm{sec}}$ \\
\hline
4 & 0.37955365 & 8 & 0.0992 & $< 0.01$ & 8 & 0.0992 & $< 0.01$ & 8 & 0.0992 & $< 0.01$ & $< 0.01$ \\
8 & 0.19467752 & 10 & 0.1469 & 0.02 & 10 & 0.1464 & 0.02 & 9 & 0.1020 & 0.03 & 0.01 \\
16 & 0.09796486 & 12 & 0.2092 & 0.11 & 11 & 0.1869 & 0.10 & 9 & 0.1147 & 0.11 & 0.04 \\
32 & 0.04906112 & 14 & 0.2525 & 0.81 & 12 & 0.1995 & 0.79 & 8 & 0.0958 & 0.74 & 1.09 \\
64 & 0.02454041 & 15 & 0.2912 & 7.10 & 12 & 0.1925 & 6.65 & 7 & 0.0684 & 6.03 & 63.58 \\
128 & 0.01227144 & 17 & 0.3374 & 63.04 & 12 & 0.2004 & 56.12 & 7 & 0.0608 & 50.87 & 5082.70 \\
\hline
\end{tabular}
\end{center}
\end{table}
We now test the AMLI method with additive preconditioning. The results for the $V$-cycle and both the $W$-cycle additive AMLI methods are presented in Table~\ref{cap:ConvA_a1_3D}.
Note that for the additive preconditioning, for the choice of $\gamma = \sqrt{1/2}$, we require that $\nu > \sqrt{{(1 + \gamma)}/{(1 - \gamma)}} = 1 + \sqrt{2}> 2$. However, both the $W$-cycle methods (for $\nu = 2$) exhibit optimal order. This may be attributed to the special structure (and clustering of eigenvalues) of the problem.
The results of nonlinear $W$-cycle further improves the results of linear $W$-cycle (as compared to the multiplicative version).
Since the nonlinear $W$-cycle AMLI method gives the best results (and is free from parameters $b$ and $\gamma$), in the remaining numerical experiments we will only present the results from multiplicative form of $V$-cycle and nonlinear $W$-cycle AMLI method.
\begin{table}[!ht]
\caption{Convergence results for additive AMLI, $\alpha = \beta = 1$}
\label{cap:ConvA_a1_3D}
\begin{center}
\begin{tabular}{|l|c|c|r|c|c|r|c|c|r|}
\hline
& \multicolumn{3}{c|}{$V$-cycle} & \multicolumn{3}{c|}{Linear $W$-cycle} & \multicolumn{3}{c|}{Nonlinear $W$-cycle}
\\
\hline
$1/h$ &
$n_{\rm it}$ & $\rho$ & $t_{\mathrm{sec}}$ &
$n_{\rm it}$ & $\rho$ & $t_{\mathrm{sec}}$ &
$n_{\rm it}$ & $\rho$ & $t_{\mathrm{sec}}$ \\
\hline
4 & 12 & 0.2050 & $< 0.01$ & 12 & 0.2050 & $< 0.01$ & 12 & 0.2050 & $< 0.01$ \\
8 & 18 & 0.3592 & 0.02 & 20 & 0.3736 & 0.03 & 15 & 0.2883 & 0.03 \\
16 & 24 & 0.4640 & 0.12 & 28 & 0.5079 & 0.15 & 16 & 0.2951 & 0.13 \\
32 & 30 & 0.5380 & 1.03 & 27 & 0.5024 & 1.03 & 15 & 0.2840 & 0.86 \\
64 & 36 & 0.5948 & 9.53 & 28 & 0.5077 & 8.56 & 14 & 0.2578 & 6.91 \\
128 & 41 & 0.6329 & 85.38 & 28 & 0.5160 & 71.13 & 13 & 0.2347 & 56.61 \\
\hline
\end{tabular}
\end{center}
\end{table}
\begin{example}
Consider the model problem \eqref{eq:ModProb} in a unit cube, fix the coefficient $\beta = 1$ and take $\alpha = 10^{m_{0}}$ for $m_{0} = \{-6, -3, 0, 3, 6 \}$. The right hand side (RHS) vector is all ones.
\end{example}
The results for the multiplicative AMLI method for varying $\alpha$ are presented in Table~\ref{cap:ConvM_av_3D} for $V$- and nonlinear $W$-cycle. We see that the $V$-cycle shows some effect of $\alpha$, with a moderate growth in the number of iterations for decreasing $h$, however, the nonlinear $W$-cycle is independent of $h$, and is fully robust with respect to $\alpha$. Note that towards very large values of $\alpha$, the system matrix is well-conditioned, and the hierarchical splitting approaches orthogonal decomposition, therefore, the $V$-cycle method also exhibits optimal order complexity.
\begin{table}[!ht]
\caption{Convergence results for multiplicative AMLI, $\beta = 1, \alpha = 10^{m_{0}}$}
\label{cap:ConvM_av_3D}
\begin{center}
\begin{tabular}{|l|cc|cc|cc|cc|cc|}
\hline
& \multicolumn{10}{c|}{$n_{\rm it}$} \\
\hline
{$\alpha \rightarrow$} &
\multicolumn{2}{c|}{$10^{-6}$} & \multicolumn{2}{c|}{$10^{-3}$} &
\multicolumn{2}{c|}{$10^{0}$} & \multicolumn{2}{c|}{$10^{3}$} &
\multicolumn{2}{c|}{$10^{6}$} \\
\hline
{$1/h$} & $V$ & $W$ & $V$ & $W$ & $V$ & $W$ & $V$ & $W$ & $V$ & $W$ \\
\hline
4 & 12 & 12 & 12 & 12 & 11 & 11 & 3 & 3 & 1 & 1 \\
8 & 15 & 13 & 15 & 12 & 15 & 12 & 5 & 5 & 2 & 2 \\
16 & 18 & 13 & 18 & 13 & 18 & 13 & 8 & 8 & 2 & 2 \\
32 & 21 & 12 & 21 & 12 & 20 & 12 & 11 & 10 & 2 & 2 \\
64 & 23 & 12 & 24 & 12 & 24 & 12 & 14 & 11 & 2 & 2 \\
128 & 27 & 12 & 25 & 12 & 25 & 12 & 16 & 11 & 3 & 3 \\
\hline
\end{tabular}
\end{center}
\end{table}
Since fixing $\alpha$ and varying $\beta$ only reverses the behavior (from left to right) as presented in Table~\ref{cap:ConvM_av_3D}, see also Section~\ref{sec:NumRes_2D}, we do not include those results here.
\begin{example}
Consider the model problem \eqref{eq:ModProb} in a unit cube, and fix the coefficient $\beta = 1$. The coefficient $\alpha$ is chosen as $1$ in $[0,0.5]^{3} \bigcup (0.5,1]^{2} \times [0,0.5] \bigcup [0,0.5] \times (0.5,1]^{2} \bigcup (0.5,1] \times [0,0.5] \times (0.5,1]$ and $\kappa$ elsewhere, where $\kappa = 10^{m_{0}}$, and $m_{0} = \{-6, -4, -2, 0 \}$. The RHS vector is all ones.
\end{example}
Finally, the results for the multiplicative AMLI method for the case with jump in the coefficients (aligned with the coarsest level mesh), which are presented in Table~\ref{cap:ConvM_jump_3D} for $V$- and nonlinear $W$-cycles, also show robustness with respect to jumps in the coefficients.
\begin{table}[ht!]
\caption{Convergence results for multiplicative AMLI with jump in the coefficients, $\beta = 1$}
\label{cap:ConvM_jump_3D}
\begin{center}
\begin{tabular}{|l|cc|cc|cc|cc|}
\hline
& \multicolumn{8}{c|}{$n_{\rm it}$} \\
\hline
{$\kappa \rightarrow$} &
\multicolumn{2}{c|}{$10^{-6}$} & \multicolumn{2}{c|}{$10^{-4}$} &
\multicolumn{2}{c|}{$10^{-2}$} & \multicolumn{2}{c|}{$10^{0}$} \\
\hline
{$1/h$} & $V$ & $W$ & $V$ & $W$ & $V$ & $W$ & $V$ & $W$ \\
\hline
4 & 13 & 13 & 13 & 13 & 12 & 12 & 11 & 11 \\
8 & 18 & 15 & 17 & 14 & 16 & 13 & 15 & 12 \\
16 & 23 & 13 & 20 & 13 & 19 & 13 & 18 & 13 \\
32 & 26 & 13 & 24 & 13 & 22 & 13 & 20 & 12 \\
64 & 29 & 13 & 27 & 13 & 25 & 13 & 24 & 12 \\
128 & 33 & 13 & 30 & 13 & 28 & 13 & 25 & 12 \\
\hline
\end{tabular}
\end{center}
\end{table}
\section{Conclusion}
\label{sec:Conclusion}

We have presented an optimal order AMLI method for problems in two-dimensional $\Hcurl$ space and three-dimensional $\Hdiv$ space. In the hierarchical setting, we derived explicit recursion formulae to compute the element matrices, and bounds for the multilevel behavior of $\gamma$ that are robust with respect to the coefficients in the model problem. The main result of our local analysis (Theorem~\ref{thm:Gamma}) shows that a second order stabilization polynomial (or two inner iterations in nonlinear method), i.e., a $W$-cycle, is sufficient to stabilize the AMLI process.
The presented numerical results, including the case with jumping coefficients (aligned with the coarsest level mesh) confirm the robustness and efficiency of the proposed method.
The performance of the presented methods for the range of parameters considered in the paper shows that these methods can be effectively used by the practitioners in the respective fields.
\begin{acknowledgements}
The author is very grateful to Dr. Johannes Kraus (RICAM, Linz) for insightful discussions on AMLI methods. Thanks are also due to Dr. Christoph Koutschan (RICAM, Linz) for helpful discussions in proving Lemma~\ref{lem:seq_c2_3D}.
\end{acknowledgements}

\appendix
\section{Coefficients of polynomial $q$}
In this appendix, we briefly discuss the computation of the polynomial coefficients for linear AMLI $W$-cycle. In \cite[pp. 1582-83]{AxelssonV-90}, authors provided the explicit formulae for the computation of the coefficients of the polynomial $q_{\nu}$, for polynomial degrees $\nu = 2,3$. Note that $q_{\nu}$ is a polynomial of degree $\nu - 1$. Since only the $W$-cycle is used in this paper, we discuss only the $\nu = 2$ case, i.e. $q(x) = q_{0} + q_{1} x$. Given the constants $\gamma$ and $b$ (which measures the quality of approximation of $A_{11}$ by $C_{11}$), the Algorithm~\ref{algo:q_AV90} computes the coefficients $q_{0}$ and $q_{1}$.
\begin{algorithm}[!ht]
\caption{Coefficients of $q(x)$, see \cite[pp. 1582-1583]{AxelssonV-90}}
\label{algo:q_AV90}
\begin{algorithmic}
\State $T_{2}(x) = 2 x^{2} - 1$
\State $\alpha = (3 - 4 \gamma^{2})/\left ( 1 + 2b + \sqrt{3 - 4 \gamma^{2} + (1 + 2b)^{2}} \right )$
\State $a = (1 + \alpha)/(1 - \alpha)$, $c = 1/(1 + T_{2}(a))$
\State $q_{0} = 8 a c/(1 - \alpha), \quad q_{1} = -8c/(1 - \alpha)^{2}$~.
\end{algorithmic}
\end{algorithm}
To simplify the expressions, we introduce a variable $s = \sqrt{1 - \gamma^{2} + b + b^{2}}$, which gives $1 - \gamma^{2} = s^{2} - b - b^{2}$. Now
\begin{align*}
\alpha & = \dfrac{3 - 4 \gamma^{2}}{1 + 2b + \sqrt{3 - 4 \gamma^{2} + (1 + 2b)^{2}}}
= \dfrac{4(1 - \gamma^{2}) - 1}{1 + 2b + \sqrt{4(1 - \gamma^{2}) + 4(b + b^{2})}} \\
& = \dfrac{4(s^{2} - b - b^{2}) - 1}{1 + 2b + 2s}
= \dfrac{4s^{2} - (1 + 2b)^{2}}{1 + 2b + 2s}
= 2s - 2b -1.
\end{align*}
Therefore, $1 + \alpha = 2s - 2b$. From the relations of $a$ and $c$, we have $a (1 - \alpha) = 1 + \alpha = 2s - 2b$, and $c = 1/(2a^{2})$. Using these simplifications, we get
\begin{align*}
q_{0} & = \dfrac{8 a c}{1 - \alpha} = \dfrac{4}{a (1 - \alpha)} = \dfrac{2}{s - b},\\
q_{1} & = -\dfrac{8c}{(1 - \alpha)^{2}} = -\dfrac{4}{a^{2} (1 - \alpha)^{2}} = -\dfrac{1}{(s - b)^{2}}.
\end{align*}
Therefore, we can write the steps of Algorithm~\ref{algo:q_AV90} in simplified form as follows:
\begin{align}
\label{eq:q_simple}
s = \sqrt{1 - \gamma^{2} + b + b^{2}}, \quad q_{0} = \dfrac{2}{s - b}, \quad q_{1} = -\dfrac{1}{(s - b)^{2}}.
\end{align}
Note that, in several practical applications, see e.g., \cite{GKrausM-08, GKrausM-08b, KrausMargenov-09}, the choice of $b = 0$, which yields $q_{0} = 2/\sqrt{1 - \gamma^{2}}$ and $q_{1} = -1/(1 - \gamma^{2})$, has been used. However, it is observed from the results in this paper that small negative values for $b$ can outperform the results for $b = 0$.

\end{document}